\documentclass[12pt,letterpaper,titlepage]{amsart}
\usepackage{amsmath, amssymb, amsthm, amsfonts,amscd,amsaddr, xr}
\usepackage[all]{xy}
\usepackage[usenames]{color}

\theoremstyle{plain}
\newtheorem{theorem}{Theorem}[section]
\newtheorem{proposition}[theorem]{Proposition}
\newtheorem{cor}[theorem]{Corollary}
\newtheorem{con}[theorem]{Conjecture}
\newtheorem{prop}[theorem]{Proposition}
\newtheorem{lemma}[theorem]{Lemma}
\newtheorem{definition}[theorem]{Definition}
\theoremstyle{definition}

\newtheorem{rmk}[theorem]{Remark}
\numberwithin{equation}{section}
\newtheorem{theoremA}{Theorem}

\newcommand{\bs}{\backslash}
\newcommand{\cc}{\mathcal{C}}
\newcommand{\C}{\mathbb{C}}

\newcommand{\A}{\mathcal{A}}
\newcommand{\Hc}{\mathcal{H}}
\newcommand{\Fc}{\mathcal{F}}

\newcommand{\Q}{\mathbb{Q}}
\newcommand{\Z}{\mathbb{Z}}

\newcommand{\R}{\mathbb{R}}
\newcommand{\N}{\mathbb{N}}

\newcommand{\SO}{\operatorname{SO}}

\newcommand{\GL}{\operatorname{GL}}

\newcommand{\Ad}{\operatorname{Ad}}
\newcommand{\ad}{\operatorname{ad}}
\newcommand{\diag}{\operatorname{diag}}
\newcommand{\vol}{\operatorname{vol}}

\newcommand{\supp}{\operatorname{supp}}

\newcommand{\err}{\operatorname{err}}
\newcommand{\rank}{\operatorname{rank}}

\newcommand{\PSl}{\operatorname{PSl}}
\newcommand{\re}{\operatorname{Re}}

\def\hat{\widehat}
\def\af{\mathfrak{a}}
\def\e{\epsilon}
\def\gf{\mathfrak{g}}
\def\cf{\mathfrak{c}}
\def\hf{\mathfrak{h}}
\def\kf{\mathfrak{k}}
\def\lf{\mathfrak{l}}

\def\pf{\mathfrak{p}}
\def\qf{\mathfrak{q}}

\def\sf{\mathfrak{s}}

\def\uf{\mathfrak{u}}

\def\la{\langle}
\def\ra{\rangle}
\def\1{{\bf1}}

\def\U{\mathcal{U}}
\def\B{\mathcal{B}}

\def\oline{\overline}
\def\F{\mathcal{F}}
\def\V{\mathcal{V}}

\def\propertyUI{{\rm (I)}}

\def\MTi{$\mathrm M\mathrm T$}
\def\WMTi{$\mathrm w\mathrm M\mathrm T$}

\title[Lattice counting]
{The harmonic analysis of lattice counting on real spherical spaces}

\begin{document}

\date{December 12, 2015}

\author[Kr\"otz]{Bernhard Kr\"{o}tz}
\email{bkroetz@gmx.de}
\address{Universit\"at Paderborn, Institut f\"ur Mathematik\\Warburger Stra\ss e 100, 
33098 Paderborn, Deutschland}
\thanks{The first author was supported by ERC Advanced Investigators Grant HARG 268105. The second author was partially supported by ISF 1138/10 and ERC 291612}
\author[Sayag]{Eitan Sayag}
\email{eitan.sayag@gmail.com}
\address{Department of Mathematics, Ben Gurion University of the Negev\\P.O.B. 653, Be'er Sheva 84105, Israel}
\author[Schlichtkrull]{Henrik Schlichtkrull}
\email{schlicht@math.ku.dk}
\address{University of Copenhagen, Department of Mathematics\\Universitetsparken 5, 
DK-2100 Copenhagen \O, Denmark}

\begin{abstract}By the collective name of {\it lattice counting} we refer to a setup introduced 
in \cite{DRS} that aims to establish a relationship between
arithmetic and randomness in the context of affine symmetric spaces.
In this paper we extend the geometric setup from symmetric 
to real spherical spaces and continue to develop the 
approach with harmonic analysis which was initiated in \cite{DRS}. 
\end{abstract}

\maketitle

\section{Introduction}

\subsection{Lattice counting}
Let us recall from Duke, Rudnick and Sarnak \cite{DRS} the setup of lattice counting on a
homogeneous space $Z=G/H$. Here $G$ is an algebraic 
real reductive group and $H<G$ an algebraic subgroup such that 
$Z$ carries an invariant measure. 
Further we are given a lattice $\Gamma<G$ such that its trace $\Gamma_H:=\Gamma\cap H$ 
in $H$ is a lattice in $H$. 

Attached to invariant measures $dh$ and $dg$ on $H$ and $G$ we 
obtain an invariant measure $d(gH)$ on $Z$ via Weil-integration:
$$ \int_Z \Big(\int_H f(gh) dh\Big)\  d(gH) = \int_G f(g) \ dg \qquad (f\in C_c(G))\, .$$  
Likewise the measures $dg$ and $dh$ give invariant measures $d(g\Gamma)$ and $d(h\Gamma_H)$ on 
$Y:=G/\Gamma$ and $Y_H:=H/\Gamma_H$. We pin down the measures $dg$ and $dh$ and hence $d(gH)$ by the request 
that $Y$ and $Y_H$ have volume one. 

\par Further we are given a family $\B$ of ``balls'' $B_R\subset Z$
depending on a parameter $R\ge 0$. 
At this point we are rather imprecise about the structure of these balls  and content 
us with the property that they constitute an exhausting family of compact sets as $R\to \infty$.   

\par Let $z_0=H\in Z$ be the standard base point. The {\it lattice
counting problem} for $\B$ consists of the determination of the
asymptotic behavior of the density of $\Gamma\cdot z_0$ in
balls $B_R\subset Z$, as the radius $R\to\infty$. By 
{\it main term counting}  for $\mathcal{B}$ we understand the statement that the asymptotic
density is 1. More precisely, 
with $$N_R(\Gamma, Z):= \#\{
\gamma\in \Gamma/\Gamma_H\mid \gamma\cdot z_0\in B_R\}$$
and $|B_R|:=\vol_Z (B_R)$ we say that main term counting holds if
\begin{equation}\label{mtc}
N_{R}(\Gamma, Z) \sim |B_{R}|\,\quad (R\to\infty).\end{equation}

\subsection{Relevant previous works}
The main term counting was established in \cite{DRS}
for symmetric spaces $G/H$ and 
certain families of balls,
for lattices with $Y_H$ compact. Furthermore, the main term counting in the case where $Y_{H}$ is non-compact 
was proven 
using a hypothesis on regularization of periods of Eisenstein series, whose proof remains unpublished.  
In subsequent work Eskin and McMullen
\cite{EM} removed the obstruction that $Y_H$ is compact and presented an ergodic
approach. Later Eskin, Mozes and Shah \cite{EMS} refined the
ergodic methods and discovered that main term counting holds for a
wider class of reductive spaces: For reductive algebraic groups $G, H$ 
defined over $\Q$ and arithmetic lattices $\Gamma<G(\Q)$  it is enough to request that 
the identity component of $H$ is not contained in a proper parabolic subgroup of $G$ 
which is defined over $\Q$ 
and that the balls $B_R$ satisfy a certain condition of {\it non-focusing}.

\par In these works the balls $B_R$ are constructed as follows. 
All spaces considered are affine in the sense that 
there exists a $G$-equivariant embedding of $Z$ into the representation module 
$V$ of a rational representation of $G$. For any such embedding and any norm on the vector 
space $V,$ one then obtains a family of balls $B_R$ on $Z$ by intersection with the metric balls 
in $V$. For symmetric spaces all families of balls produced this way
are suitable for the lattice counting, but in general one needs to 
assume non-focusing in addition.
In particular all maximal reductive subgroups 
satisfy all the conditions and hence fulfill the main term counting.

\subsection{Real spherical spaces}
In this paper we investigate the lattice counting 
for a real spherical space $Z$, that is, it is requested that 
the action of a minimal parabolic subgroups $P<G$ on $Z$ admits 
an open orbit. In addition we assume that $H$ is reductive and remark that 
with our standing assumption that $Z$ is unimodular
this is automatically satisfied 
for a spherical space when the Lie algebra $\hf$ of $H$ is self-normalizing (see \cite{KK}, Cor. 9.10). 

\par Our approach is based on spectral theory and is a natural 
continuation to \cite{DRS}. We consider a particular type of balls 
which are intrinsically defined by the geometry of $Z$ (and thus
not related to a particular representation $V$ as before).

\subsubsection{Factorization of spherical spaces}\label{Section: Factorization of spherical spaces}

\par In the spectral approach it is of relevance to get a control 
over intermediate subgroups $H<H^\star<G$ which arise in the following way: 
Given a unitary representation $(\pi, {\mathcal H})$ one looks at the smooth 
vectors ${\mathcal H}^\infty$ and its continuous dual  ${\mathcal H}^{-\infty}$, the 
distribution vectors. The space $({\mathcal H}^{-\infty})^H$
of $H$-invariant distribution vectors is
of fundamental importance. For all pairs $ (v,\eta)\in 
{\mathcal H}^\infty \times ({\mathcal H}^{-\infty} )^H$ one obtains a smooth function
on $Z$, a {\it generalized matrix-coefficient}, via
\begin{equation}\label{matrix coeff}
m_{v,\eta}(z) = \eta(g^{-1}\cdot v) \qquad (z=gH\in Z)\, .
\end{equation}
The functions (\ref{matrix coeff})
are the building blocks for the harmonic 
analysis on $Z$.  The stabilizer $H_\eta$ in $G$ of $\eta\in  
({\mathcal H}^{-\infty} )^H$ is a closed subgroup which contains 
$H$, but in general it can be larger than $H$ even if $\pi$ is non-trivial.

Let us call $Z^\star=G/H^\star$ a {\it factorization} of $Z$ if $H<H^\star$ and 
$Z^\star$ is unimodular. For a general real spherical space
$Z$ the homogeneous spaces $Z_\eta=G/H_\eta$ can happen to be non-unimodular
(see \cite{KKSS2} for $H$ the Iwasawa $N$-subgroup).  
However there is a large subclass of real spherical spaces which behave well 
under factorization. Let us call a factorization co-compact if $H^\star/H$ is 
compact and {\it basic} if (up to connected components) $H^\star$ is of the form $H_I:=HI$ for 
a normal subgroup $I\triangleleft G$.  
Finally we call a factorization {\it weakly basic} if it is obtained by a composition of 
a basic and a co-compact factorization. 

\subsubsection{Wavefront spherical spaces}
\par A real spherical space is called {\it wavefront} if the attached 
compression cone is a quotient of a closed Weyl-chamber.
The relevant definitions will be recalled in Section \ref{wfrss}.
Many real spherical spaces are wavefront: all 
symmetric spaces and all Gross-Prasad type spaces $G\times H/ H$  (see (\ref{GP1}) -  (\ref{GP3})) are
wavefront.\footnote{Also, if $Z$ is complex, then of the 78 cases in the list 
of \cite{BraPez}, the non-wavefront cases are (11), (24), (25), (27),  (39-50),  
(60), (61)}
The terminology {\it wavefront} originates from \cite{SV} because 
wavefront real spherical spaces satisfy the ``wavefront lemma'' of Eskin-McMullen 
(see \cite{EM}, \cite{KKSS1}) which is fundamental in the approach of 
\cite{EM} to lattice counting.  

\par On the geometric side wavefront real spherical spaces enjoy the following 
property from \cite{KKSS2}: All $Z_\eta$ are unimodular and 
the factorizations of the type 
$Z_\eta$ are precisely the weakly basic factorizations of $Z$. 

\par On the spectral level wavefront real spherical spaces are distinguished by the 
following integrability property, also from \cite{KKSS2}: The generalized matrix coefficients
$m_{v,\eta}$ of (\ref{matrix coeff})
belong to $L^p(Z_\eta)$ for some $1\leq p <\infty$ only depending on $\pi$ and $\eta$.

\subsubsection{Main term counting}
\par In the theorem below we assume that $Z$ is a wavefront real spherical space
of reductive type. 
For simplicity we also assume that all compact normal subgroups of $G$ are finite.
\par Using soft techniques from harmonic analysis and a general property of decay
from \cite{KSS1}, our first result (see Section \ref{Mt II}) is: 

\begin{theoremA}\label{thmA} 
Let $Z=G/H$ be  as above, and
assume that $Y=G/\Gamma$ is compact. Then main term counting (\ref{mtc}) holds. 
\end{theoremA}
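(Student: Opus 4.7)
The plan is to follow the harmonic-analytic strategy of \cite{DRS}: recast the counting problem as an equidistribution question on $Y$, and absorb the error term via the decay of generalized matrix coefficients that is available on wavefront real spherical spaces. Set $\chi_R := \1_{B_R}$ and define the counting function
\[
F_R(g\Gamma) := \sum_{\gamma\in\Gamma/\Gamma_H} \chi_R(g\gamma H) \in C(Y),
\]
which satisfies $F_R(e\Gamma)=N_R(\Gamma,Z)$ and is $\Gamma$-invariant by a direct check. A standard unfolding, based on the fibration $G/\Gamma_H \to G/H$ with unit-volume fiber $Y_H$, gives $\int_Y F_R\,dy = |B_R|$; more generally, for $\psi\in C^\infty(Y)$,
\[
\la F_R,\psi\ra_{L^2(Y)} = \int_{B_R} P_\psi(z)\,dz, \qquad P_\psi(gH) := \int_{Y_H} \psi(gy)\,dy.
\]

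Decomposing $\psi = c_\psi + \psi_0$ with $c_\psi := \int_Y \psi$ and $\psi_0 \perp \C$, and using $\vol(Y_H)=1$ to get $P_\psi = c_\psi + P_{\psi_0}$, one obtains
\[
\la F_R,\psi\ra_{L^2(Y)} = c_\psi\,|B_R| + \int_{B_R} P_{\psi_0}(z)\,dz.
\]
The function $P_{\psi_0}$ is a generalized matrix coefficient as in (\ref{matrix coeff}) for the representation $\pi_0:=L^2(Y)\ominus\C$, paired with the $H$-invariant distribution vector $\eta:\phi\mapsto\int_{Y_H}\phi$ (well-defined because $Y_H$ is compact): explicitly, $P_{\psi_0}(gH) = \eta(\pi_0(g^{-1})\psi_0) = m_{\psi_0,\eta}(gH)$, and $P_{\psi_0}$ descends to $Z_\eta=G/H_\eta$ since $\eta$ is $H_\eta$-invariant. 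Under the wavefront hypothesis, the integrability results recalled in Section 1.3 supply some $1\leq p<\infty$ with $m_{\psi_0,\eta}\in L^p(Z_\eta)$, and the factorization $Z\to Z_\eta$ is weakly basic, so $Z_\eta$ is unimodular and the fiber $H_\eta/H$ has a good structure. A fiber integration of $\chi_R$ across this factorization, combined with H\"older's inequality on $Z_\eta$, then yields the key error estimate
\[
\left|\int_{B_R} P_{\psi_0}(z)\,dz\right| = o(|B_R|) \qquad (R\to\infty).
\]

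To pass from the smoothed identity to the pointwise count, I would use a sandwich argument. Choose a family $\{\psi_U\}_{U\ni e}$ of non-negative bumps on $Y$ with $\int_Y \psi_U = 1$, each supported in the image of a small neighborhood $U$ of $e$ in $G$. The intrinsic construction of the balls should yield a uniform regularity of the form $B_{R-\e(U)}\subset u^{-1}B_R\subset B_{R+\e(U)}$ for $u\in U$, with $\e(U)\to 0$ as $U\to\{e\}$; together with the monotonicity $R\mapsto N_R$ this traps $\la F_R,\psi_U\ra$ between $N_{R-\e}$ and $N_{R+\e}$, and combining with the preceding paragraph and letting $U\to\{e\}$ yields $N_R\sim|B_R|$. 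The principal obstacle lies in the matrix-coefficient estimate: although the $L^p$-bound is asserted on the possibly strictly larger quotient $Z_\eta$, one must transfer it into an $o(|B_R|)$ statement for the integral over $B_R\subset Z$, and it is here that both the unimodularity of $Z_\eta$ and the weakly basic structure of the factorization (both furnished by the wavefront hypothesis) enter crucially. A secondary technical point is verifying the left-translation regularity of the intrinsic balls from their geometric definition.
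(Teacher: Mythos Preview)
Your overall architecture matches the paper's: unfold to $Z$, reduce the main term to a weak equidistribution statement on $Y$, and close with a sandwich argument using the translation regularity $B_r^G B_R \subset B_{R+r}$ of the intrinsic balls together with the volume bound $|B_{R+r}|\le e^{cr}|B_R|$ (these are Lemmas~\ref{distortions} and~\ref{volumes}, so your ``should yield'' is in fact justified). The genuine gap is the step where you invoke integrability for $P_{\psi_0}$. You present $P_{\psi_0}$ as a generalized matrix coefficient of $\pi_0 = L^2(Y)\ominus\C$ and then appeal to the wavefront integrability property to get $m_{\psi_0,\eta}\in L^p(Z_\eta)$. But that property (Proposition~\ref{propI}(\ref{two})) is stated and proved only for \emph{irreducible} unitary representations, and $\pi_0$ is highly reducible. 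There is no single finite $p$ and no single factorization $Z_\eta$ that works for all of $\pi_0$: distinct irreducible constituents $\pi$ produce distinct stabilizers $H_{\Lambda_\pi(\nu)}$ and distinct integrability exponents, and for a constituent with $H_{\Lambda_\pi(\nu)}\supsetneq H$ non-compactly, the corresponding piece of $P_{\psi_0}$ is not in $L^p(Z)$ for any finite $p$. So the sentence ``the integrability results \dots supply some $1\le p<\infty$ with $m_{\psi_0,\eta}\in L^p(Z_\eta)$'' is exactly where the argument breaks.

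The paper repairs this by not working with arbitrary $\psi_0$. Since $Y$ is compact, the Plancherel decomposition of $L^2(Y)^K$ is discrete, and one introduces the dense subspace $\mathcal A(Y)\subset C(Y)^K_{\rm van}$ of \emph{finite} sums of $K$-spherical matrix coefficients $a_{v,\nu}$ with $\pi$ non-trivial irreducible. On each irreducible piece one now legitimately has a factorization $Z_\eta$ and an $L^p$-bound; rather than H\"older, the paper then invokes Theorem~\ref{th=1} (smooth vectors in $L^p(Z_\eta)$ vanish at infinity) to get $\phi^H\in C_0(Z_\eta)$, after which $\la \1_R,\phi^H\ra = o(|B_R|)$ follows from a direct splitting of the integral over a large compact set and its complement (Proposition~\ref{prop: criterion} and Remark~\ref{first remark about UI'}). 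Density of $\mathcal A(Y)$ suffices to conclude because $\|F_R^\Gamma\|_{L^1(Y)}=1$ uniformly in $R$. Your H\"older route could be made to work on each irreducible piece as well, but you must first restrict to finite Fourier support, allow the target space $Z_\eta$ to vary with the piece, and feed in the well-factorization of the balls (Corollary~\ref{factor}) when $Z_\eta\neq Z$; you correctly flagged this last issue, but the prior irreducibility issue is the one that actually blocks the proof as written.
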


Since wavefront real spherical spaces satisfy the wavefront lemma by \cite{KKSS1}, Section 6,
this theorem could also be derived with the ergodic method of \cite{EM}. 
In the current context the main point is thus the proof by harmonic analysis.

\par To remove the assumption that $Y$ is compact and to obtain error term bounds for the 
lattice counting problem we need to apply more sophisticated tools from harmonic analysis.
This will be discussed in the next paragraph
with some extra assumptions on $G/H$.

\subsection{Error Terms}

The problem of determining the error term in counting problems is notoriously difficult and
in many cases relies on deep arithmetic information. Sometimes, like
in the Gauss circle problem, some
error term is easy to establish but getting an optimal error term
is a very difficult problem.

\par We restrict ourselves to the cases where the cycle $H/\Gamma_H$
is compact.\footnote{After a theory for regularization of $H$-periods of Eisenstein series is developed, one can drop this assumption.} 
To simplify the exposition here we assume in addition that $\Gamma<G$ is
irreducible, i.e. there do not exist non-trivial  normal subgroups $G_1, G_2$  of $G$ and 
lattices $\Gamma_i < G_i$ such that $\Gamma_1 \Gamma_2$ has finite index in $\Gamma$. 

\par The error we study is measure theoretic in nature,
and will be denoted here as $\err(R,\Gamma)$.
Thus, $\err(R,\Gamma)$ measures the deviation of two measures on
$Y=\Gamma \backslash G$,
the counting measure arising from lattice points in a ball of radius $R$,
and the invariant measure $d \mu_{Y}$ on $Y$. More precisely, 
with $\1_R$ denoting the characteristic function of $B_R$ we
consider the densities $$F^{\Gamma}_{R}(g \Gamma):=
\frac{\sum_{\gamma\in \Gamma/ \Gamma_H} \1_R(g\gamma H)}{|B_{R}|}.$$   
Then, 
$$\err(R,\Gamma)=||F^{\Gamma}_{R}-d \mu_{Y}||_{1},$$ 
where  $||\,\cdot\,||_{1}$ denotes the total variation of the signed measure. Notice that 
$|F^{\Gamma}_{R}(e\Gamma)-1|=\frac{|N_{R}(\Gamma, Z) - |B_{R}||}{|B_{R}|}$ is essentially 
the error term for the pointwise count (\ref{mtc}).

Our results on the error term $\err(R,\Gamma)$ allows us to deduce results toward the error 
term in the smooth counting problem, a classical problem that studies the quantity 
 $$\err_{pt,\alpha}(R,\Gamma)=|B_R||F^{\Gamma}_{\alpha,R}(e\Gamma)-1|$$ where 
 $\alpha \in C_{c}^{\infty}(G)$ is a positive smooth function of compact support (with integral one) 
 and $F^{\Gamma}_{\alpha,R}=\alpha*F^{\Gamma}_{R}.$
See Remark \ref{error relate} for the comparison of $\err(R,\Gamma)$ with 
$\err_{pt,\alpha}(R,\Gamma)$.

\par To formulate our result we introduce the
exponent $p_H(\Gamma)$ (see (\ref{ic})), which measures the worst $L^{p}$-behavior
of any generalized matrix coefficient associated with a spherical
unitary representation $\pi$, which is  $H$-distinguished and occurs
in the automorphic spectrum of $L^2(\Gamma \backslash G)$. 
We first state our result for the non-symmetric case of triple product
spaces, which is Theorem \ref{thm=errorprasad} from the body
of the paper.

\begin{theoremA}\label{thmB} 
Let $Z=G_0^3/ \diag (G_0)$ for $G_0=\SO_e(1,n)$ 
and assume that $H/\Gamma_H$ is compact.
For all $p>p_H(\Gamma)$ there exists a $C=C(p)>0$ such that
$$\err(R,\Gamma)\leq C |B_R|^{-{1\over (6n+3)p}}\, $$
for all $R\geq 1$.
(In particular, main term counting holds in this case). 
Furthermore, in regards to smooth counting, for any $\alpha \in C_{c}^{\infty}(G)$ and 
for all $p>p_H(\Gamma)$ there exists a $C=C(p,\alpha)>0$ such that
$$\err_{pt,\alpha}(R,\Gamma)\leq C |B_R|^{1-{1\over (6n+3)p}}\, $$
for all $R\geq 1$.
\end{theoremA}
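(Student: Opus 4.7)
The plan is to follow the spectral blueprint of the paper: realize $\err(R,\Gamma)$ through the Plancherel decomposition of $L^2(Y)$, bound the spectral contributions using the $L^p$-decay encoded by $p_H(\Gamma)$, and balance this against a geometric smoothing error. The smooth-counting bound will then fall out by removing the smoothing step.

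First I would smooth. Fix a non-negative approximate identity $\psi_\eta\in C_c^\infty(G)$ supported in an $\eta$-neighborhood of the identity with $\int_G\psi_\eta\,dg=1$, and split
\begin{equation*}
\err(R,\Gamma) \leq \|F_R^\Gamma - \psi_\eta * F_R^\Gamma\|_{L^1(Y)} + \|\psi_\eta * F_R^\Gamma - 1\|_{L^1(Y)}.
\end{equation*}
The intrinsic balls $B_R\subset Z$ arising from the wavefront structure have tame boundary, so that the $\eta$-tube around $\partial B_R$ has $Z$-volume $\ll \eta\,|B_R|$; unfolding against $\Gamma_H\backslash H$ (which is compact by hypothesis) and then $\Gamma$-averaging yields $\|F_R^\Gamma - \psi_\eta * F_R^\Gamma\|_{L^1(Y)} \ll \eta$ uniformly in $R$.

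Second I would spectrally analyze the smoothed term. The function $\psi_\eta * F_R^\Gamma - 1$ is mean-zero on $Y$; automorphic Plancherel decomposes it over the non-trivial $H$-distinguished irreducibles $\pi\hookrightarrow L^2(Y)$, each spectral summand being a pairing of $|B_R|^{-1}\1_{B_R}$ on $Z$ against a generalized matrix coefficient $m_{v,\xi_\pi}$ with $v$ obtained from $\psi_\eta$ acting on a reference $K$-fixed vector. The defining inequality of $p_H(\Gamma)$ yields $\|m_{v,\xi_\pi}\|_{L^p(Z_{\xi_\pi})} \ll_p \cS_k(v)$ for a fixed Sobolev order $k$ depending only on $G$; since $\cS_d(\psi_\eta) \ll \eta^{-d}$, and H\"older gives $\|\1_{B_R}/|B_R|\|_{L^{p'}(Z)} = |B_R|^{-1/p}$, combining with Cauchy--Schwarz on a compact fundamental domain in $Y$ produces
\begin{equation*}
\|\psi_\eta * F_R^\Gamma - 1\|_{L^1(Y)} \ll \eta^{-d}\,|B_R|^{-1/p}
\end{equation*}
for an effective Sobolev degree $d$.

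Third I would optimize. Minimizing $\eta + \eta^{-d}|B_R|^{-1/p}$ in $\eta$ selects $\eta_\ast \sim |B_R|^{-1/(p(d+1))}$ and produces $\err(R,\Gamma)\ll |B_R|^{-1/(p(d+1))}$. The stated value $d+1=6n+3$ for $Z=G_0^3/\diag(G_0)$ with $G_0=\SO_e(1,n)$ comes from a careful accounting of the derivatives needed for the $L^p$-decay estimate on $Z$ (using the quantitative asymptotics from \cite{KSS1}) together with the $K$-type bookkeeping in a maximal compact of $G_0^3$. The smooth-counting assertion follows from the same analysis applied pointwise to $\alpha*F_R^\Gamma(e\Gamma)$ with a fixed $\alpha\in C_c^\infty(G)$: there is no smoothing step, the pointwise bound reads $|\alpha*F_R^\Gamma(e\Gamma)-1|\ll |B_R|^{-1/((6n+3)p)}$, and multiplying by $|B_R|$ gives the stated bound on $\err_{pt,\alpha}$.

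The main obstacle is pinning down the Sobolev degree $d=6n+2$: this requires tracing derivatives through the quantitative $L^p$-decay with sharp constants reflecting the rank-one structure of $\SO_e(1,n)$ and the dimensions of the relevant $K$-types in $G_0^3$. A secondary difficulty is the geometric Lipschitz estimate on $\partial B_R$ needed for the clean $O(\eta)$ bound on the smoothing error; this is where the wavefront structure of the intrinsic balls plays its role.
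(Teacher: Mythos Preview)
Your overall architecture---smooth by convolution, bound the smoothed piece spectrally via H\"older against $L^p$-norms of generalized matrix coefficients, then optimize the smoothing scale---is exactly the scheme the paper uses (Lemma~\ref{smooth counting lemma} through Theorem~\ref{thm=error}). The difficulty is that your spectral step contains a genuine gap which is, in fact, the entire point of the triple-space section.

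You write that ``the defining inequality of $p_H(\Gamma)$ yields $\|m_{v,\xi_\pi}\|_{L^p(Z_{\xi_\pi})} \ll_p \cS_k(v)$ for a fixed Sobolev order $k$ depending only on $G$''. But $p_H(\Gamma)$ carries no such quantitative content: by Definition~\ref{defi p_H} and (\ref{ic}) it only records, for each $\pi$ separately, the threshold beyond which $m_{v,\eta}\in L^p(Z_\eta)$. It says nothing about the \emph{size} of $\|m_{v,\eta}\|_p$, and certainly nothing uniform across the infinitely many $\pi$ in the automorphic spectrum. Without such uniformity you cannot sum (or integrate) over $\pi$ after applying H\"older, and the argument stalls. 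This uniformity is precisely Hypothesis~A of Section~\ref{Section Lp-bounds}: the bounds (\ref{hypo1})--(\ref{hypo2}) feed into Lemma~\ref{fiber} and then Proposition~\ref{mpro}, which is the engine behind the spectral estimate. The paper devotes all of Section~\ref{UI for triple} to verifying Hypothesis~A for $Z=G_0^3/\diag(G_0)$, and this verification is not soft: it uses the explicit Bernstein--Reznikov integral kernel for the $H$-invariant trilinear functional (and Deitmar's extension for $n>2$), together with Stirling asymptotics for the value $f_\pi(\1)$, to compare $\|f_\pi\|_p$, $\|f_\pi\|_\infty$, and $\|f_\pi\|_{\infty,\Omega}$ uniformly in $\pi$. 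Your sketch replaces this with an appeal to ``quantitative asymptotics from \cite{KSS1}'', but that reference gives vanishing at infinity, not uniform $L^p$ bounds; the missing ingredient is exactly the explicit model.

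A secondary point: your accounting of the exponent $6n+3$ is off. In the paper it arises as $2k+1$ with $k>s+1$ and $s=\dim G/K=3n$; the Sobolev order $k$ splits as a Weyl-law piece ($>s/2+1$) plus a local Sobolev-embedding piece ($>s/2$), and the factor $2$ in $2k+1$ comes from the $\e^{1/2}$ (not $\e$) in the smoothing comparison after Lemma~\ref{smooth counting lemma}. Your $O(\eta)$ smoothing bound is actually sharper than what the paper uses, but then your claim $d=6n+2$ is inconsistent with it; with an honest $O(\eta)$ bound the optimization would give a \emph{better} exponent than $6n+3$, so something in your derivative count is being double-booked.
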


To the best of our knowledge this is the first error
term obtained for a non-symmetric space.  The crux of the
proof is locally uniform comparison between $L^{p}$
and $L^{\infty}$ norms of generalized matrix coefficients $m_{v,\eta}$
which is achieved by applying the model of
\cite{BRe} and \cite{Deit} for the triple product functional $\eta$ in spherical principal series.

\par It is possible to obtain error term bounds under a
certain technical hypothesis introduced in Section \ref{Section Lp-bounds}
and refered to as Hypothesis~A. This hypothesis in turn is implied by a conjecture on 
the analytic structure of families of Harish-Chandra modules which we explain in Section \ref{sec=conj}. 
The conjecture and hence the hypothesis appear to be 
true for symmetric spaces but requires quite a technical tour de force.
In general, the techniques  currently available do not allow for an elegant and efficient solution. 
Under this hypothesis we show that: 

\begin{theoremA}\label{thmC} 
Let $Z$ be wavefront real spherical space 
for which Hypothesis A  is valid.
Assume also
\begin{itemize}
\item $G$ is  semisimple with no compact factors
\item $\Gamma$ is arithmetic and irreducible
\item $\Gamma_{H}=H \cap \Gamma$ is co-compact in $H$.
\item $p>p_H(\Gamma)$
\item $k>{rank(G/K)+1\over 2} \dim(G/K) +1$
\end{itemize}
Then, there exists a constant $C=C(p, k)>0$ such that
$$\err(R,\Gamma)\leq C |B_R|^{-{1\over (2k+1)p}}\, $$
for all $R\geq 1$.
Moreover, if $Y=\Gamma \backslash G$ is compact one can replace the third condition by $k>\dim(G/K)+1$.
\end{theoremA}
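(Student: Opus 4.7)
The plan is to decompose $\err(R,\Gamma)$ into a geometric (boundary) term and a spectral term by means of an approximate identity $\psi_\epsilon\in C_c^\infty(G)$ of width $\epsilon\in (0,1]$, and to optimize $\epsilon$ at the end. Specifically, set $F_{R,\epsilon}^\Gamma:=\psi_\epsilon *F_R^\Gamma$ on $Y=G/\Gamma$; since $\psi_\epsilon *1 =1$, one has
$$\err(R,\Gamma)\le \|F_R^\Gamma-F_{R,\epsilon}^\Gamma\|_1+\|F_{R,\epsilon}^\Gamma-1\|_1.$$

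To control the first (geometric) term I would unfold against $\Gamma_H$, which is co-compact in $H$, to reduce to an estimate on $Z$: $\|F_R^\Gamma-F_{R,\epsilon}^\Gamma\|_1\le |B_R|^{-1}\|\1_R-\psi_\epsilon *\1_R\|_{L^1(Z)}$, and then use the wavefront structure of $Z$ to bound the $\epsilon$-skin $B_{R+\epsilon}\setminus B_{R-\epsilon}$ of $B_R$ by $C\epsilon|B_R|$. Since the balls in $\B$ are intrinsic (defined via the polar decomposition and the compression cone), such a boundary estimate follows by changing variables in the polar integration formula.

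The second (spectral) term is where the bulk of the work lies. Because $\vol(Y)=1$ one has $\|\cdot\|_1\le \|\cdot\|_\infty$, after which I would invoke a Sobolev embedding $\|f\|_\infty \le C_Y\, S_k(f)$ on $Y$ built from left-invariant vector fields. In the non-compact arithmetic setting this is the cusp-adapted Bernstein-type bound whose threshold is exactly $k>\tfrac{\rank(G/K)+1}{2}\dim(G/K)+1$, whereas for compact $Y$ the classical embedding with threshold $k>\dim(G/K)+1$ suffices; this accounts for the ``moreover'' clause. Applying left-invariant differential operators of order $\le k$ to $F_{R,\epsilon}^\Gamma-1=\psi_\epsilon *(F_R^\Gamma-1)$ moves them onto $\psi_\epsilon$, whose Sobolev mass contributes a factor $\epsilon^{-2k}$. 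Dualizing via the unfolding identity
$$\langle F_R^\Gamma-1,\phi\rangle_Y=|B_R|^{-1}\int_Z\1_R(z)\,P_{\phi-\langle\phi\rangle}(z)\,dz,\qquad P_\phi(gH):=\int_{H/\Gamma_H}\phi(gh\Gamma)\,dh,$$
reduces the problem to estimating $H$-period integrals against $\1_R$ for mean-zero test functions $\phi$. Each component $\pi\subset L^2(Y)$ contributes via a generalized matrix coefficient $m_{v,\eta_\pi}$ with $\eta_\pi\in(\pi^{-\infty})^H$ the period (see (\ref{matrix coeff})); by H\"older against $\1_R$ and Hypothesis~A one expects $\bigl|\int_Z \1_R\,m_{v,\eta_\pi}\bigr|\le C\,|B_R|^{1-1/p}\,S_{k_0}(v)$ for any $p>p_H(\Gamma)$. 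Assembling these across the automorphic spectrum—using arithmeticity to control multiplicities and irreducibility of $\Gamma$ to exclude degenerate factorizations on which the period would vanish or behave pathologically, with semisimplicity and no compact factors removing trivial contributions—one arrives at
$$\|F_{R,\epsilon}^\Gamma-1\|_1\le C\,\epsilon^{-2k}\,|B_R|^{-1/p}.$$

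Combining this with the geometric bound yields $\err(R,\Gamma)\le C\bigl(\epsilon+\epsilon^{-2k}|B_R|^{-1/p}\bigr)$, and the balanced choice $\epsilon:=|B_R|^{-1/((2k+1)p)}$ produces the announced estimate. The hard part is the spectral step: one must propagate the asserted $L^p$-control of matrix coefficients $m_{v,\eta}$ with quantitative Sobolev dependence on both the smooth vector $v$ and the distributional period $\eta_\pi$ through the direct-integral decomposition of $L^2(Y)$, and verify that the $\epsilon^{-2k}$ bookkeeping is compatible with the Plancherel measure on the automorphic dual. The wavefront integrability result of \cite{KKSS2} guarantees that $m_{v,\eta}$ actually lives in the relevant $L^p$ space; everything else—the boundary estimate, the Sobolev embedding, and the final optimization—is standard given the quantitative input of Hypothesis~A.
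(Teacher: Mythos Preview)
Your overall architecture—mollify by $\psi_\epsilon$, split into a boundary term and a spectral term, then optimize in $\epsilon$—is exactly what the paper does, and your final exponent matches. But the spectral step as you describe it has a real gap.

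You attribute the threshold $k>\tfrac{r+1}{2}s+1$ (with $r=\rank(G/K)$, $s=\dim(G/K)$) to a global Sobolev embedding $\|f\|_\infty\le C_Y S_k(f)$ on $Y$, calling it a ``cusp-adapted Bernstein-type bound''. No such embedding with this specific threshold is available; for non-compact $Y$ smooth functions need not even be bounded, and $F_R^\Gamma$ lies a priori only in $L^1(Y)$, not $L^2(Y)$, so $S_k(F^\Gamma_{R,\epsilon})$ carries uncontrolled powers of $\epsilon$ beyond those coming from derivatives of $\psi_\epsilon$. In the paper the threshold has an entirely different origin. One works dually from the outset, bounding $\sup_{\|\phi\|_\infty\le 1}|\langle \1_{R,\alpha},\phi^H\rangle|$, and the heart of the matter is the estimate $\|\phi^H\|_{L^p(Z)}\le C\|\phi\|_{\infty,k}$ for $K$-invariant test functions $\phi$ of vanishing mean (Proposition~\ref{mpro}). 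That estimate is proved by spectrally decomposing $\phi$ (not $F_R^\Gamma$), bounding each $\|\phi_\pi^H\|_p$ by $\|\phi_\pi\|_\infty$ via (\ref{hypo1}), localizing the sup to a fixed compact $\Omega\subset G$ via (\ref{hypo2}), applying the \emph{local} Sobolev lemma on $K\backslash G$ over $\Omega$ (this costs $k_1>s/2$), and then summing over the spectrum by Cauchy--Schwarz against $(1+|\pi|)^{-k}$. The convergence of $\sum_\pi m(\pi)(1+|\pi|)^{-k}$ is where arithmeticity enters, through Ji's Weyl-law upper bound, which needs $k>rs/2+1$ (or $k>s/2+1$ if $Y$ is compact). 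The threshold in the theorem is the sum $k+k_1$ of these two contributions, not a global embedding exponent. Without (\ref{hypo2}) one cannot replace the global sup of $\phi_\pi$ on $Y$ by a sup over a compact set, and the local Sobolev step does not go through; your sketch does not use (\ref{hypo2}) at all.

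A secondary bookkeeping issue: moving $k$ derivatives onto $\psi_\epsilon$ in $L^1$ costs $\epsilon^{-k}$, not $\epsilon^{-2k}$. In the paper the spectral term is $\epsilon^{-k}|B_R|^{-1/p}$ and the boundary term is $\epsilon^{1/2}$ (obtained via Cauchy--Schwarz on $\|\1_{R,\alpha}-\1_R\|_1$), so the optimized exponent is again $((2k+1)p)^{-1}$. Your pair $(\epsilon,\epsilon^{-2k})$ happens to yield the same final exponent, but neither intermediate power is the one that actually arises from the argument.
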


The existence of a non-quantitative error
term for  symmetric spaces was established in
\cite{BeOh} and improved in \cite{GN}.

We note that in case of the hyperbolic plane our error term is still
far from the quality of the bound of A. Selberg. This is because
we only use a weak version of the trace formula, namely Weyl's law, and
use simple soft Sobolev bounds between eigenfunctions on $Y$.

\section{Reductive homogeneous spaces}\label{rhs}

In this section we review a few facts on reductive homogeneous spaces:
the Mostow decomposition, the associated geometric balls and their 
factorizations.

We use the convention that real Lie groups are denoted by upper case Latin letters, e.g $A, B, C$, 
and their Lie algebras by the corresponding lower case German letter $\af$, $\mathfrak{b}$, $\mathfrak{c}$. 
\par Throughout this paper $G$ will denote an algebraic real reductive group and $H<G$ 
is an algebraic subgroup.  We form the homogeneous space $Z=G/H$ and write $z_0=H$ for 
the standard base point. 

Furthermore, unless otherwise mentioned we assume that $H$ is reductive in $G$,
that is, the adjoint representation of $H$ on $\gf$ is completely reducible.
In this case we say that $G/H$ is {\it of reductive type}.

\par Let us fix a maximal compact subgroup $K<G$
for which we assume that the associated
Cartan involution $\theta$ leaves $H$ invariant (see the references to
\cite{KSS1}, Lemma 2.1).  
Attached to $\theta$ is the infinitesimal 
Cartan decomposition $\gf = \kf +\sf$ where $\sf = \kf ^\perp$ is the orthogonal complement 
with respect to a non-degenerate invariant bilinear form $\kappa$ on $\gf$ which is positive definite on $\sf$  
(if $\gf$ is semi-simple, then 
we can take for $\kappa$ the Cartan-Killing form).  Further we set $\qf:= \hf^\perp$.

\subsection{Mostow decomposition}

We recall Mostow's polar decomposition: 
\begin{equation} \label{Mostow} K \times _{H\cap K} \qf 
\cap \sf \to Z, \ \  [k,X]\mapsto k \exp(X) \cdot z_0 \end{equation}
which is a homeomorphism. 
With that we define  
$$\|k\exp(X)\cdot z_0\|_Z=\|X\|:= \kappa (X,X) ^{1\over 2}$$
for $k\in K$ and $X\in\qf\cap\sf$.

\subsection{Geometric balls}\label{balls}

The problem of lattice counting in $Z$
leads to a question of exhibiting natural
exhausting families of compact subsets. We use balls which 
are intrinsically defined by the geometry of $Z$.

We define the {\it intrinsic ball} of radius $R>0$ on $Z$ by
$$B_R:=\{z\in Z\mid \|z\|_Z<R\}\, .$$
Write $B_R^G$ for the intrinsic ball of $Z=G$, that is, if
$g=k\exp(X)$ with $k\in K$ and $X\in \sf$, then we put
$\|g\|_G=\|X\|$ and define $B^G_R$ accordingly.

\par Our first interest is the growth of the volume $|B_R|$ for $R\to \infty$. 
We have the following upper bound.

\begin{lemma}\label{volumes}
There exists a constant $c>0$ such that: 
$$|B_{R+r}|\leq e^{cr}|B_R|$$
for all $R\geq 1, r\geq 0$.
\end{lemma}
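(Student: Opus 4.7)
The plan is to use the Mostow decomposition (\ref{Mostow}) to express $|B_R|$ as the integral of a Jacobian over a ball in $\qf \cap \sf$, to bound this Jacobian exponentially in $\|X\|$, and to deduce the claim from a linear differential inequality in $R$.

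First, Mostow's homeomorphism tells us that the invariant measure on $Z$ pulls back to $J(X)\, dk\, dX$ on $K \times_{K\cap H}(\qf \cap \sf)$, where $dk$ is a $K$-invariant measure on $K/(K\cap H)$, $dX$ is the Lebesgue measure on $\qf \cap \sf$ induced by $\kappa$, and $J$ is a non-negative continuous $(K\cap H)$-invariant density. Thus
\[
|B_R| = \vol(K/(K\cap H)) \cdot V(R), \qquad V(R) := \int_{\{X \in \qf \cap \sf\,:\,\|X\| < R\}} J(X)\, dX.
\]

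Second, I would establish the pointwise bound $J(X) \leq C e^{c_0 \|X\|}$ for some constants $C, c_0 > 0$, using the standard formula for the Jacobian of the Mostow map in terms of the determinant of $\frac{1 - e^{-\ad X}}{\ad X}$ composed with the projection $\gf \to \qf$. Since $X \in \sf$, the operator $\ad X$ is symmetric with respect to the inner product on $\gf$ induced by $\kappa$ and $\theta$, so it has real eigenvalues bounded in absolute value by a constant times $\|X\|$; this yields the claimed exponential bound.

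Third, I would convert this into a linear differential inequality for $V$. By the co-area formula, $V'(R) = \int_{\|X\|=R} J(X)\, d\sigma(X)$. In polar coordinates $X = t\xi$, one exploits that along a regular ray the function $t \mapsto J(t\xi)$ takes essentially the form of a product of $\sinh$-terms for restricted roots on a Cartan subspace $\af_\qf \subset \qf \cap \sf$, yielding the pointwise comparison $J(s\xi) \leq e^{c_1|s-t|} J(t\xi)$ for $s, t \geq 1$ and regular $\xi$. Combined with the polynomial comparison $s^{d-1}/t^{d-1} \leq 2^{d-1}$ valid for $s, t \in [R-1, R]$ and $R \geq 2$ (where $d = \dim(\qf \cap \sf)$), this gives
\[
V'(R) \leq c_2 \int_{R-1 \leq \|X\| < R} J(X)\, dX \leq c_2 V(R),
\]
and Gronwall's inequality integrates this to $V(R+r) \leq e^{c_2 r} V(R)$, which is the lemma. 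The small range $R \in [1,2]$ is handled by continuity, since $V(1) > 0$ and $V'$ is bounded there.

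The main obstacle is justifying the pointwise ratio bound $J(s\xi)/J(t\xi) \leq e^{c_1|s-t|}$ along rays of $\qf \cap \sf$: $J$ vanishes on the root-hyperplane walls, so the ratio is not defined there. This is resolved by noting that the walls have codimension at least one in $\qf \cap \sf$, so they do not contribute to either side of the integrated comparison on spheres, and the pointwise bound on regular rays suffices.
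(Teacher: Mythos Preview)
Your overall strategy is exactly the paper's: pull back to $\qf\cap\sf$ via Mostow, write $|B_R|$ as a radial integral of the Jacobian $\delta$, and reduce to a ray-by-ray inequality. The paper phrases the needed inequality as
\[
\delta(RX)\,R^{l-1}\ \le\ c\int_0^R \delta(tX)\,t^{l-1}\,dt
\qquad(\|X\|=1),
\]
which is equivalent to the differential inequality you want for $V(R)$, and then simply invokes \cite[Lemma~A.3]{EMS} for it, with $c$ independent of $X$.

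The gap in your write-up is the justification of the pointwise ratio bound $J(s\xi)\le e^{c_1|s-t|}J(t\xi)$. You appeal to an explicit ``product of $\sinh$-terms for restricted roots on a Cartan subspace $\af_\qf\subset\qf\cap\sf$''. That formula is available for \emph{symmetric} $G/H$, where every $X\in\qf\cap\sf$ is $(K\cap H)$-conjugate into a fixed Cartan subspace $\af_\qf$ and $J$ then factors over the restricted roots. For a general reductive pair --- which is the setting of this section --- there is no such global conjugation into a single abelian $\af_\qf$, and the Jacobian along an arbitrary ray $t\xi$ need not be a $\sinh$-product. What one actually has is the determinant of a compression of $\frac{1-e^{-\ad X}}{\ad X}$ (or equivalently $\frac{\sinh\ad X}{\ad X}$) between subspaces that are not $\ad X$-invariant, and extracting a uniform upper bound on $\frac{d}{dt}\log J(t\xi)$ from that requires an argument; it is precisely this analysis that \cite[Lemma~A.3]{EMS} carries out.

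A related slip: you say $J$ vanishes on ``root-hyperplane walls''. In the Mostow parametrization \eqref{Mostow} the map is a global diffeomorphism, so $J$ is everywhere strictly positive; the vanishing on walls you have in mind belongs to the $KAH$-picture in the symmetric case, not here. This does not rescue the argument, though, since you still need the uniform log-derivative bound for $t\to\infty$. Your step~2 (the exponential upper bound $J(X)\le Ce^{c_0\|X\|}$) is correct but by itself gives no control on the ratio $J(s\xi)/J(t\xi)$. The cleanest fix is to do what the paper does: cite \cite[Lemma~A.3]{EMS} for the ray-wise inequality and conclude.
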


\begin{proof}  Recall the integral formula
\begin{equation}\label{int formula}
\int_Z f(z)\,dz=\int_{K} \int_{\qf\cap\sf}
f(k\exp(X).z_0)\delta(X)\,dX\,dk, 
\end{equation}
for $f\in C_c(Z)$,
where $\delta(Y)$ is the Jacobian at $(k,Y)$ of the
map (\ref{Mostow}). It is independent of $k$
because $dz$ is invariant. Then 
$$|B_R|=\int_{X\in\qf\cap\sf, \|X\|< R} \delta(X)\,dX\,.$$
Hence it suffices to prove that there exists $c>0$ such that
$$\int_0^{R+r} \delta(tX) t^{l-1} \,dt
\leq e^{cr} \int_0^R \delta(tX) t^{l-1}\, dt$$
for all $X\in\qf\cap\sf$ with $\|X\|=1$.
Here $l=\dim\qf\cap\sf$. Equivalently, the function
$$R\mapsto e^{-cR}\int_0^R  \delta(tX) t^{l-1}\, dt$$
is decreasing, or by differentiation,
$$\delta(RX) R^{l-1} \leq c \int_0^R \delta(tX) t^{l-1}\, dt$$
for all $R$. The latter inequality is established
in \cite[Lemma A.3]{EMS} with $c$ independent of $X$.
\end{proof}

Further we are interested how the volume behaves under distortion by elements 
from $G$.

\begin{lemma} \label{distortions}
For all $r,R>0$ one has $ B_r^G B_R \subset B_{R+r}$. 
\end{lemma}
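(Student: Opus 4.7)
The plan is to reduce the inclusion to the pointwise triangle-type inequality
\[
\|g z\|_Z \,\le\, \|g\|_G + \|z\|_Z \qquad (g\in G,\ z\in Z),
\]
from which the lemma is immediate under the strict open-ball hypotheses $\|g\|_G<r$ and $\|z\|_Z<R$.

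Given $z = g_0 H$, I first choose $g_0 = k'\exp(Y)$ to be the Mostow representative with $Y\in\qf\cap\sf$, so that $\|g_0\|_G=\|Y\|=\|z\|_Z$; this is possible by (\ref{Mostow}). Let $d$ denote the $G$-invariant Riemannian distance on $G/K$ induced by $\kappa$, so that $\|g\|_G=d(eK,gK)$. The Riemannian triangle inequality combined with $G$-invariance of $d$ gives
\[
\|gg_0\|_G \,=\, d(eK, gg_0K) \,\le\, d(eK, gK) + d(gK, gg_0K) \,=\, \|g\|_G + \|g_0\|_G \,=\, \|g\|_G + \|z\|_Z.
\]
Since $gz=gg_0H$, it suffices to prove the key inequality
\[
\|g'H\|_Z \,\le\, \|g'\|_G \qquad (g'\in G),
\]
and to apply it to $g' = gg_0$.

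The key inequality asserts that no representative of the coset $g'H$ has $G$-norm smaller than the Mostow norm $\|g'H\|_Z$. Geometrically, this is the statement that the foot of the perpendicular from $eK$ onto the orbit $g'HK\subset G/K$ lies in $g'HK$ at distance exactly $\|g'H\|_Z$. I would prove it by closest-point theory in non-positive curvature: the orbit $g'HK$ is a translate of the totally geodesic submanifold $HK/K\simeq H/(H\cap K)$ in the symmetric space $G/K$, so the closest-point projection of $eK$ onto $g'HK$ exists uniquely and is characterized by perpendicularity of the connecting geodesic at the foot. A tangent-space computation then shows: if the foot is $g'h_0K$ with polar decomposition $g'h_0=\tilde k\exp(X_0)$, then the tangent to the connecting geodesic identifies (via left-translation by $(g'h_0)^{-1}$) with $X_0\in\sf$, the tangent space of $g'HK$ at the foot identifies with $\hf\cap\sf\subset\sf$, and perpendicularity forces $X_0\in(\hf\cap\sf)^\perp\cap\sf=\qf\cap\sf$. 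Thus $g'h_0$ is already in Mostow form for the coset $g'H$, giving $\|g'H\|_Z=\|X_0\|=\|g'h_0\|_G\le\|g'\|_G$, since $g'$ itself is one of the representatives.

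The main obstacle is precisely this key inequality, which genuinely uses non-positive curvature of $G/K$. A self-contained alternative to invoking a closest-point theorem is to exploit convexity directly: the function $h\mapsto d(eK,g'hK)^2$ on $H$ is convex along $H$-geodesics (because $d^2$ is convex along geodesics of $G/K$ and the embedding $HK/K\hookrightarrow G/K$ is totally geodesic), so it attains its minimum at a unique $h_0$, and a direct computation identifies its derivative at $h_0$ with (a positive multiple of) the $\hf\cap\sf$-component of $X_0$; its vanishing yields $X_0\in\qf\cap\sf$, recovering the Mostow form.
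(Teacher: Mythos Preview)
Your proof is correct and follows essentially the same route as the paper: both reduce the lemma to the triangle-type inequality $\|gz\|_Z\le\|g\|_G+\|z\|_Z$, which in turn rests on the identity $\|z\|_Z=\inf_{h\in H}\|gh\|_G$ (the paper's Lemma~\ref{distance Z}) proved via non-positive curvature of $G/K$. The only tactical difference is that the paper verifies this identity by a direct right-triangle comparison $a^2+b^2\le c^2$ starting from the Mostow representative, whereas you recover the Mostow representative as the foot of the closest-point projection onto the totally geodesic orbit $g'HK$; these are two standard CAT(0) arguments for the same fact.
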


To prove the lemma we first record that:                    

\begin{lemma}\label{distance Z}
Let $z=gH\in Z$. Then $\|z\|_Z=\inf_{h\in H} \|gh\|_G$.
\end{lemma}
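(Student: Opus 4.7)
The plan is to reinterpret both sides as distances in the Riemannian symmetric space $G/K$ and then invoke a standard fact about geodesics perpendicular to a totally geodesic submanifold in non-positive curvature.

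First I would dispatch the easy inequality. Let $z = k\exp(X)z_0$ be the Mostow form with $k\in K$ and $X\in \qf\cap\sf$, so that $\|z\|_Z = \|X\|$. Since $z = gH$, there exists $h_0\in H$ with $g = k\exp(X)h_0$. Then $gh_0^{-1} = k\exp(X)$ is in the Cartan form that computes $\|\cdot\|_G$, whence $\|gh_0^{-1}\|_G = \|X\|$ and $\inf_{h\in H}\|gh\|_G \le \|z\|_Z$.

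For the reverse inequality, I would use that $\|g'\|_G = d_{G/K}(eK, g'K)$, where $d_{G/K}$ denotes the $G$-invariant Riemannian distance on $G/K$. Running $h$ over $H$,
\begin{equation*}
\inf_{h\in H}\|gh\|_G \;=\; \inf_{h\in H} d_{G/K}(eK,\, ghK) \;=\; d_{G/K}(eK,\, gHK/K),
\end{equation*}
and by $G$-invariance, left-translating by $(k\exp(X))^{-1}$, the right-hand side equals $d_{G/K}(\exp(-X)K,\, HK/K)$. Since $H$ is reductive and $\theta$-stable, $HK/K \cong H/(H\cap K)$ is a complete totally geodesic submanifold of $G/K$ whose tangent space at $eK$ is $\hf\cap\sf$. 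The decomposition $\gf = \hf \oplus \qf$ is $\theta$-stable and $\kappa$-orthogonal, so $\sf = (\hf\cap\sf) \oplus (\qf\cap\sf)$ orthogonally; in particular $-X\in \qf\cap\sf$ is perpendicular to $T_{eK}(HK/K) = \hf\cap\sf$.

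Consequently the geodesic $t\mapsto \exp(-tX)K$ leaves $HK/K$ perpendicularly at $eK$. In the non-positively curved symmetric space $G/K$ the distance function to a complete totally geodesic submanifold is convex, so any perpendicular geodesic realizes the distance from its endpoint to the submanifold. Hence $d_{G/K}(\exp(-X)K,\, HK/K) = d_{G/K}(\exp(-X)K,\, eK) = \|X\| = \|z\|_Z$, which combined with the first step finishes the proof. The one non-formal ingredient is this last CAT(0) statement; it is standard in the geometry of non-positively curved Riemannian symmetric spaces and can either be cited directly or derived from the convexity of $t\mapsto d_{G/K}(\gamma(t), HK/K)^2$ along any geodesic $\gamma$ of $G/K$.
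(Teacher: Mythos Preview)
Your argument is correct and proceeds along the same lines as the paper: both interpret $\|\cdot\|_G$ as the Riemannian distance in the symmetric space $G/K$ (the paper uses $K\backslash G$) and invoke non-positive curvature to conclude.

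The only difference is in packaging. You keep the whole orbit $HK/K$ as a complete totally geodesic submanifold and quote the CAT(0) fact that a geodesic leaving a closed convex set orthogonally realizes the distance to it. The paper instead first uses the Cartan decomposition of $H$ to replace a general $h$ by $\exp(T)$ with $T\in\hf\cap\sf$, and then compares the three points $x_0$, $x_0\exp(-T)$, $x_0\exp(X)$: since $X\perp T$ they form a right triangle, and non-positive curvature gives the Pythagorean-type inequality $a^2+b^2\le c^2$, hence $\|\exp(X)\|_G\le\|\exp(X)\exp(T)\|_G$. Your projection argument and the paper's right-triangle argument are two formulations of the same CAT(0) principle; the paper's is marginally more self-contained, while yours avoids the extra reduction via the Cartan decomposition of $H$.
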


\begin{proof} It suffices to prove that
$\|\exp(X)h\|_G \geq \|X\|$ for $X\in\qf\cap\sf$, $h\in H$,
and by Cartan decomposition of $H$, we may assume
$h=\exp(T)$ with $T\in\hf\cap\sf$. Thus we have reduced
to the statement that
$$\|\exp(X)\exp(T)\|_G\geq \|\exp(X)\|_G$$
for $X\perp T$ in $\sf$. 
In order to see this, we note
that for each $g\in G$ the norm
$\|g\|_G$ is the length of the geodesic
in $K\bs G$ which joins
the origin $x_0$ to $x_0g$.
More generally the geodesic between
$x_0 g_1$ and $x_0 g_2$ has length $\|g_2g_1^{-1}\|_G$.
Hence $c=\|\exp(X)\exp(T)\|_G$
is the distance from
$A=x_0\exp(-T)$ to $B=x_0\exp(X)$.
As $X\perp T$ the points $A$ and $B$ form a right triangle with $C=x_0$.
The hypotenuse has length $c$ and the leg $CB$ has
length $a=\|\exp(X)\|$.
As the sectional curvatures are
non-positive we have $a^2+b^2\le c^2$. In particular $a\le c$.
\end{proof}

In particular, it follows that
\begin{equation}\label{distance inequality}
\|gz\|_Z\leq \|z\|_Z+\|g\|_G\quad (z\in Z, g\in G)
\end{equation}
and Lemma \ref{distortions} follows.

\begin{rmk}\label{compatible balls}
Observe that the norm $\|\,\cdot\,\|_G$ on $G$ depends on the chosen Cartan decomposition
$\theta$. However, by applying (\ref{distance inequality}) with $Z=G$ one sees that
the norm obtained with a conjugate $\theta'$ of $\theta$ will satisfy
\begin{equation}\label{two norms}
\|g\|'_G\le \|g\|_G+ c, \qquad \|g\|_G\le \|g\|'_G+ c'
\end{equation}
for all $g\in G$ with some constants $c,c'\ge 0$. 

For the definition of $\|\,\cdot\,\|_Z$ we assumed that $\theta$ leaves $H$
invariant. If instead we use the identity in Lemma \ref{distance Z} as the definition
of $\|\,\cdot\,\|_Z$ then this assumption can be avoided. In any case, it follows 
that the norms on $Z$ obtained from two different Cartan involutions will
satisfy similar inequalities as (\ref{two norms}). The 
corresponding families of balls are then also compatible,
$$B_R\subset B'_{R+c}, \qquad B'_R\subset B_{R+c'},$$
for all $R>0$.
\end{rmk}

\subsection{Factorization}\label{ball factorization}
 
By a (reductive) factorization of $Z=G/H$ we understand a homogeneous space 
$Z^\star = G/H^\star$ with $H^\star$ an algebraic subgroup of $G$ such that 
 
\begin{itemize} 
\item $ H^\star$ is reductive. 
\item $H\subset H^\star$. 
\end{itemize}

A factorization is called 
{\it compact} if $Z^\star$ is compact, and {\it co-compact}
if the fiber space ${\mathcal F}:=H^\star/H$ is compact.  
It is called {\it proper} if $\dim H<\dim H^\star<\dim G$.

\begin{lemma} \label{co-comp factor} Let $Z=G/H\to Z^\star=G/H^\star$ be a factorization. Then the following assertions are equivalent: 
\begin{enumerate}
\item $Z\to Z^\star$ is co-compact. 
\item There exist a compact subgroup $K^\star<H^\star$ such that $K^\star H= H^\star$.
\item There exists a compact subalgebra $\kf^\star<\hf^\star$ such that $\hf^\star= \kf^\star +\hf$ 
and $\exp(\kf^\star)<H^\star$ compact. 
\end{enumerate}
\end{lemma}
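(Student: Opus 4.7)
The plan is to establish the equivalence by proving the implications $(2)\Rightarrow(1)$, $(2)\Rightarrow(3)$, $(1)\Rightarrow(2)$ and $(3)\Rightarrow(1)$. The first two are essentially formal: given $(2)$, the quotient $H^\star/H = K^\star H/H \cong K^\star/(K^\star\cap H)$ is a continuous image of a compact group; and setting $\kf^\star:=\Lie(K^\star)$ yields $(3)$, the identity $\hf^\star=\kf^\star+\hf$ being the surjectivity at the identity of the differential of the multiplication map $K^\star\times H\to H^\star$.

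For $(1)\Rightarrow(2)$, the essential point is that $H$ is reductive inside $H^\star$, since complete reducibility of $\Ad(H)$ on $\gf$ descends to the $H$-subrepresentation $\hf^\star$. We choose a Cartan involution $\theta^\star$ of $H^\star$ stabilizing $H$ and obtain compatible Cartan decompositions $\hf^\star=\kf_{H^\star}\oplus\sf_{H^\star}$ and $\hf=\kf_H\oplus\sf_H$ with $\sf_H=\hf\cap\sf_{H^\star}$. The Mostow decomposition (\ref{Mostow}) applied to the pair $(H^\star,H)$ in place of $(G,H)$ is then a homeomorphism
$$K_{H^\star}\times_{K_H}\bigl(\hf^\perp\cap\sf_{H^\star}\bigr)\longrightarrow H^\star/H,$$
in which $\hf^\perp$ denotes the orthogonal complement of $\hf$ inside $\hf^\star$. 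Compactness of the target forces the Euclidean factor $\hf^\perp\cap\sf_{H^\star}$ to vanish, so $\sf_{H^\star}\subset\hf$; consequently $K^\star:=K_{H^\star}$ satisfies $K^\star H\supset K_{H^\star}\exp(\sf_{H^\star})=H^\star$, which is $(2)$.

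For $(3)\Rightarrow(1)$, let $K^\star\subset H^\star$ denote the compact subgroup with Lie algebra $\kf^\star$ supplied by the hypothesis. The differential at the identity of the orbit map $K^\star\to H^\star/H$ is the composition $\kf^\star\hookrightarrow\hf^\star\twoheadrightarrow\hf^\star/\hf$, which is surjective by $(3)$. Consequently the orbit $K^\star H/H$ is a compact, open and connected subset of $H^\star/H$, and therefore coincides with the connected component of the base point. Since $H^\star$ is algebraic, $H^\star/H$ has only finitely many connected components, all of which are translates of this one and hence compact, so $H^\star/H$ itself is compact.

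The only genuine subtlety is this last reduction from the identity component to the full space in $(3)\Rightarrow(1)$; the differential-geometric argument only produces compactness of one component, and one needs the algebraic structure of $H^\star$ to conclude. Apart from that, the whole proof is an elementary application of the Mostow and Cartan decompositions to the reductive pair $(H^\star,H)$.
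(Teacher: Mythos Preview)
Your proof is correct and follows essentially the same route as the paper's: $(1)\Rightarrow(2)$ via the Mostow decomposition of the reductive pair $(H^\star,H)$, $(2)\Rightarrow(3)$ by passing to Lie algebras, and $(3)\Rightarrow(1)$ via the open-orbit argument combined with finiteness of components. Your extra implication $(2)\Rightarrow(1)$ is harmless and your treatment of $(1)\Rightarrow(2)$ and $(3)\Rightarrow(1)$ is somewhat more detailed than the paper's.

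One small point: in $(2)\Rightarrow(3)$ you assert that the differential of the multiplication map $K^\star\times H\to H^\star$ is surjective at the identity, but you do not justify this; surjectivity of a smooth map does not by itself force surjectivity of its differential. The paper invokes Sard's theorem here, and the full argument is Sard plus equivariance: Sard guarantees a regular value, hence a point $(k,h)$ where $dm$ is surjective, and then the relation $m\circ(L_{k_0}\times R_{h_0})=L_{k_0}R_{h_0}\circ m$ transports surjectivity to $(e,e)$. You should make this explicit.
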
 
\begin{proof} First (1) implies (2) by the Mostow decomposition of the 
reductive homogeneous space $H^\star/H$. Clearly (2) implies (3) as the multiplication map
$K^\star \times H \to H^\star$ needs to be submersive by Sard's theorem. 
Finally, for (3) implies (1)  we observe that $H^\star/H$ has finitely many components
and $\exp(\kf^\star)H$ is compact and open in there.
\end{proof}

\par Let  ${\mathcal F} \to  Z\to Z^\star$ be a factorization of $Z$. 
We write $B_R^\star$ and $\B_R^\Fc$ for the intrinsic balls in $Z^\star$ and 
$\Fc$, respectively. 

\begin{lemma}
We have
$B_R^\star= B_RH^\star/ H^\star$
and 
$B_R^\Fc
= B_R\cap \Fc.$
\end{lemma}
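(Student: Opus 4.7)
The plan is to reduce both equalities to the infimum characterization of the intrinsic distance from Lemma \ref{distance Z}: for $z = gH \in Z$ one has $\|z\|_Z = \inf_{h \in H} \|gh\|_G$, and analogously for $Z^\star$ and $\Fc$ (with the convention of Remark \ref{compatible balls}, which allows one to use the infimum formula as the definition and thereby sidestep the requirement that $\theta$ stabilize $H^\star$; alternatively one arranges, as one may, that $\theta$ preserves both $H$ and $H^\star$ simultaneously).

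For the first identity $B_R^\star = B_R H^\star/H^\star$, the inclusion $H \subset H^\star$ enlarges the set over which the infimum is taken, giving
$$\|gH^\star\|_{Z^\star} = \inf_{h^\star \in H^\star} \|gh^\star\|_G \;\leq\; \inf_{h \in H} \|gh\|_G = \|gH\|_Z,$$
so any $gH \in B_R$ projects into $B_R^\star$, proving one inclusion. Conversely, given $gH^\star \in B_R^\star$, choose $h^\star \in H^\star$ with $\|gh^\star\|_G < R$; setting $g' = gh^\star$ and taking $h = e$ in the infimum yields $\|g'H\|_Z \leq \|g'\|_G < R$, so $g'H \in B_R$ and its image in $Z^\star$ is $gH^\star$.

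For the second identity $B_R^\Fc = B_R \cap \Fc$, the key point is that for $h^\star \in H^\star$ the norm $\|h^\star\|_{H^\star}$ defined from the Cartan decomposition of $H^\star$ (via $\theta|_{H^\star}$ and the restriction of $\kappa$ to $\hf^\star$) agrees with $\|h^\star\|_G$: any decomposition $h^\star = k^\star \exp(Y)$ with $k^\star \in H^\star \cap K$ and $Y \in \hf^\star \cap \sf$ is simultaneously a valid Cartan decomposition of $h^\star$ viewed as an element of $G$. Consequently the two infimum formulas
$$\|h^\star H\|_\Fc = \inf_{h \in H} \|h^\star h\|_{H^\star}, \qquad \|h^\star H\|_Z = \inf_{h \in H} \|h^\star h\|_G$$
coincide, and the equality of the balls follows at once.

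The only real obstacle is bookkeeping with the Cartan involutions: one must check that the norm on $H^\star$ used to define $B_R^\Fc$ is genuinely the restriction of the norm on $G$, and that the infimum characterization of Lemma \ref{distance Z} is available on both $Z$ and $Z^\star$. Once these compatibilities are fixed (by choosing $\theta$ to stabilize both $H$ and $H^\star$, which is possible since both are reductive in $G$ with $H \subset H^\star$), each identity reduces to a one-line comparison of infima.
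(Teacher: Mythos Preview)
Your proposal is correct and follows the same route as the paper, which simply says ``Follows from Lemma \ref{distance Z}''; you have written out the details of that one-line proof. The only extra care you take---checking that a single $\theta$ can be chosen stabilizing both $H$ and $H^\star$, and that the intrinsic norm on $H^\star$ is the restriction of $\|\cdot\|_G$---is exactly the bookkeeping the paper leaves implicit (and partially addresses via Remark \ref{compatible balls}).
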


\begin{proof} Follows from Lemma \ref{distance Z}.
\end{proof}

\par For a compactly supported bounded measurable
function $\phi$ on $Z$ we define the
fiberwise integral
$$\phi^{\mathcal F}(gH^\star):= \int_{H^\star/H} \phi(gh^\star) \ d(h^\star H) $$
and recall the integration formula
\begin{equation}\label{integral with Z*}
\int_Z \phi(gH) \ d (gH)=\int_{Z^\star} \phi^{\mathcal F}(gH^\star) \ d(gH^\star)\,
\end{equation}
under appropriate normalization of measures.
Consider the characteristic function $\1_R$ of $B_R$
and note that its fiber average $\1_R^{\mathcal F}$ is supported in the compact ball
$B_R^\star$.
We say that the family of balls
$(B_R)_{R>0}$ {\it factorizes well to $Z^\star$} provided
for all compact subsets $Q\subset G$
\begin{equation} \label{factor limit}
\lim_{R\to \infty} {\sup_{g\in Q} \1_R^{\mathcal F}(gH^\star) \over |B_R|} =0\, .\end{equation}
Observe that for all compact subsets $Q$ there exists an 
$R_0=R_0(Q)>0$ such that
$$\sup_{g\in Q} \1_R^{\mathcal F}(gH^\star) \leq |B_{R+R_0}^{\mathcal F}|\, $$
by Lemma \ref{distortions}. 
Thus the balls $B_R$ factorize well provided
\begin{equation} \label{CRITERION} \lim_{R\to \infty}  {|B_{R+R_0}^{\mathcal F}|\over |B_R|} =0\, .\end{equation}
for all $R_0>0$. 

\begin{rmk}
The condition that the balls $B_{R}$ factorize well is closely related to the non-focusing condition (Definition 1.14 in \cite{EMS}). 
Thus, in the case of semi-simple connected $H$, the non-focusing condition of the intrinsic balls is implied 
by the condition that they factorize well to all factorizations. \end{rmk}

\subsection{Basic factorizations}\label{basicf}

There is a special class of factorizations with which we are dealing with in the sequel. 
From now on we assume that $\gf$ is semi-simple and write 
$$ \gf= \gf_1 \oplus \ldots \oplus \gf_m$$
for the decomposition  into simple ideals.  For a reductive subalgebra $\hf<\gf$ 
and a subset $I\subset \{ 1, \ldots, m\}$
we define the reductive subalgebra 
\begin{equation} \label{HI} \hf_I:= \hf + \gf_I=\hf + \bigoplus_{i\in I} \gf_i\, .\end{equation}
 
We say that the factorization is {\it basic} provided that $\hf^*=\hf_I$ for some $I$. 
Finally we call a factorization {\it weakly basic} if it is built from consecutive
basic and co-compact factorizations, that is, there exists a sequence
\begin{equation} \label{factor sequence} 
\hf^\star=\hf^k\supset\dots\supset\hf^0=\hf 
\end{equation}
 of reductive subalgebras
such that for each $i$ we have $\hf^i=(\hf^{i-1})_I$ for some $I$ or $\hf^i/\hf^{i-1}$ is compact.
The following lemma shows that in fact it suffices with $k\le 2$.

\begin{lemma}\label{two-step} 
Let $Z\to Z^\star$ be a weakly basic factorization.  
Then there exists an intermediate factorization $Z\to Z_b \to Z^\star$
such that $Z\to Z_b$ is basic and $Z_b\to Z^\star$ co-compact.
\end{lemma}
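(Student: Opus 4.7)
The plan is to proceed by induction on the length $k$ of a sequence \eqref{factor sequence} that witnesses the weakly basic property of $Z\to Z^\star$. The cases $k\le 1$ are immediate: either $\hf^\star=\hf$, or the single step is itself basic (take $\hf_b:=\hf^\star$) or co-compact (take $\hf_b:=\hf$). For the inductive step, I apply the induction hypothesis to the truncated sequence $\hf^{k-1}\supset\ldots\supset\hf^0=\hf$, obtaining a subset $J\subset\{1,\ldots,m\}$ such that $\hf_{b,0}:=\hf_J$ satisfies $\hf\subset\hf_{b,0}\subset\hf^{k-1}$ with $\hf^{k-1}/\hf_{b,0}$ co-compact. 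By Lemma \ref{co-comp factor}(3), pick a compact subalgebra $\kf'\subset\hf^{k-1}$ with $\hf^{k-1}=\kf'+\hf_{b,0}$ and $\exp(\kf')$ compact.

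Now split on the type of the final step $\hf^{k-1}\to\hf^k$. If this step is co-compact, then $H^k/H_{b,0}$ fibers over the compact base $H^k/H^{k-1}$ with compact fiber $H^{k-1}/H_{b,0}$ and is therefore itself compact; I take $\hf_b:=\hf_{b,0}$ and the factorization $\hf_b\subset\hf^\star$ is co-compact while $\hf\subset\hf_b$ is basic by hypothesis.

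If instead the last step is basic with $\hf^k=(\hf^{k-1})_I=\hf^{k-1}+\gf_I$ for some $I$, then I set $\hf_b:=\hf_J+\gf_I=\hf_{I\cup J}$, which is basic over $\hf$ by construction. Using that $\gf_I$ is an ideal of $\gf$ (and in particular is normalized by $\kf'$), the co-compact relation for $\hf^{k-1}$ lifts:
$$\hf^k = \hf^{k-1}+\gf_I = \kf'+\hf_{b,0}+\gf_I = \kf'+\hf_b.$$
Hence $\hf^k/\hf_b$ is co-compact by Lemma \ref{co-comp factor}(3), with the same compact subalgebra $\kf'$. At the group level, the corresponding statement $H^k = \exp(\kf')\cdot H_b$ follows likewise from the normality of the analytic subgroup with Lie algebra $\gf_I$.

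The substantive content is the commutation used in the second case, which lets one push every basic step to the left of every co-compact step; it relies on three facts: the trivial identity $(\hf_J)_I=\hf_{I\cup J}$ (making a composition of basic steps basic), the normality of $\gf_I$ in $\gf$ (so that adjoining $\gf_I$ to the co-compact relation preserves the compact complement), and the elementary observation that a tower of co-compact factorizations is co-compact. I expect no serious obstacle; the only mild bookkeeping is the passage between the Lie algebra and group formulations of co-compactness, which is absorbed into repeated use of Lemma \ref{co-comp factor}.
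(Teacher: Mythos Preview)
Your proof is correct and follows essentially the same approach as the paper: both arguments rest on the single key commutation that a basic step can be pushed past a co-compact step via $\hf^{k-1}+\gf_I=(\kf'+\hf_{b,0})+\gf_I=\kf'+(\hf_{b,0}+\gf_I)$. The only difference is organizational---the paper phrases this as a local swap of adjacent steps (together with the observations that consecutive basic, resp.\ co-compact, steps merge), whereas you package it as an induction on the length of the chain; the content is the same.
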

\begin{proof} 
Let a sequence (\ref{factor sequence}) of factorizations which are consecutively basic or compact be given. 
We first observe that two consecutive basic factorizations make up for 
a single basic factorization, and likewise two consecutive compact factorizations yield a 
single compact factorization by Lemma 
\ref{co-comp factor}. Hence it suffices to prove that
we can modify a string 
$$\hf^{i+2}\supset \hf^{i+1}\supset \hf^i$$ 
with $\hf^{i+2}/\hf^{i+1}$ basic and $\hf^{i+1}/\hf^i$ compact to 
$$\hf^{i+2}\supset\hf_b^{i+1} \supset \hf^{i}$$ with 
$\hf^{i+2}/\hf_b^{i+1}$ compact and $\hf_b^{i+1}/\hf^i$ basic.

We have $\hf^{i+2}=\hf^{i+1} +\gf_I$ for some $I$, and  
by Lemma \ref{co-comp factor}  that
$\hf^{i+1}= \hf^i +\cf$ with $\cf$ compact. 
Then $\hf_b^{i+1}:=\hf^i +\gf_I $ is a reductive subalgebra and a basic 
factorization of $\hf^i$. Furthermore $\hf^{i+2}=\hf_b^{i+1} +\cf$.  
This establishes the lemma
\end{proof}

\section{Wavefront real spherical spaces}\label{wfrss}

We assume that $Z$ is real spherical, i.e.~a minimal 
parabolic subgroup $P<G$ has an open orbit on $Z$. 
It is no loss of generality to assume that $PH\subset G$ is open, or equivalently
that $\gf=\hf +\pf$.  

\par If $L$ is a real algebraic group, then we write 
$L_{\rm n}$ for the normal subgroup of $L$ which is generated 
by all unipotent element. In case $L$ is reductive we observe that 
$\lf_{\rm n}$ is the sum of all non-compact simple ideals of $\lf$. 

\par According to \cite{KKS} there is a unique parabolic subgroup $Q\supset P$ 
with the 
following two properties: 
\begin{itemize}
\item $QH=PH$.
\item There is a Levi decomposition $Q=LU$ with $L_{\rm n} \subset Q\cap H\subset L$. 
\end{itemize}
Following \cite{KKS} we call $Q$ a $Z$-adapted parabolic subgroup.  

Having fixed $L$ we let $L=K_L A_L N_L$ be an Iwasawa decomposition of $L$.
We choose  an Iwasawa decomposition $G=KAN$ which inflates the one of $L$, i.e.
$K_L<K, A_L =A$ and $N_L<N$. 
Further we may assume that $N$ is the unipotent 
radical of the minimal parabolic $P$. 

\begin{rmk}\label{remark about K}
It should be noted that the assumption on the Cartan decomposition $\theta$,
which was demanded in Section \ref{balls}, may be overruled by 
the above requirement to $K$.
However, it follows from Remark \ref{compatible balls} that the balls $B_R$
can still be defined, and that the difference does not disturb the lattice counting on $Z$.
\end{rmk}
  
\par Set $A_H:=A\cap H$ and put $A_Z=A/A_H$. We recall that $\dim A_Z$ is an 
invariant of the real spherical space, called the real rank (see \cite{KKS}).   

In \cite{KKSS1}, Section 6, we defined the notion of {\it wavefront} for 
a real spherical space,
which we quickly recall. Attached to $Z$ is 
a geometric invariant, the so-called compression cone
which is a closed and convex subcone $\af_Z^-$ of $\af_Z$. 
It is defined as follows.
Write $\Sigma_\uf$ for the space of $\af$-weights of the $\af$-module $\uf$ and
let $\oline\uf$ denote the corresponding sum of root spaces for $-\Sigma_\uf$.
According to \cite{KKS} there exists a linear map
\begin{equation}\label{Tmap}
T: \oplus_{\alpha\in\Sigma_\uf} \gf^{-\alpha} = \oline \uf \,\to\,   
\lf_H^{\perp}\oplus \uf\subset \oplus_{\beta\in \{0\}\cup\Sigma_\uf} \gf^{\beta} 
\end{equation}
such that
$
\hf = \lf \cap \hf +\{ \oline X + T(\oline X)\mid \oline X \in \oline\uf\}.
$
Here  $\lf_H^{\perp}$ denotes the orthocomplement of $\lf \cap \hf$ in $\lf$.
For each pair $\alpha,\beta$ we denote by 
$$T_{\alpha,\beta}: \gf^{-\alpha}\to \gf^\beta$$
the map obtained from $T$ by restriction to
$\gf^{-\alpha}$ and projection to $\gf^\beta$.
Then $T=\sum_{\alpha,\beta} T_{\alpha,\beta}$ and by definition
$$\af_Z^-=\{ X\in \af\mid  (\alpha+\beta)(X)\ge 0, \,\,\forall \alpha,\beta
\text{ with } T_{\alpha,\beta} \neq 0\}.$$
It follows from (\ref{Tmap}) that $\alpha+\beta$ vanishes on $\af_H$
if $T_{\alpha,\beta}\neq 0$.
Hence $\af_Z^-\subset \af_Z$.
If one denotes by  $\af^{-}\subset\af $ the closure of the 
negative Weyl chamber, then 
$\af^-+\af_H\subset \af_Z^-$ and by definition
$Z$ is wavefront if
$$ \af^-+\af_H= \af_Z^- .$$

Let us mention that many real spherical spaces are wavefront; for example 
all symmetric spaces and all Gross-Prasad type spaces $Z=G\times H / H$ with $(G,H)$ one of the following
\begin{eqnarray}  
\label{GP1}& & (\GL_{n+1}(\C),  \GL_n(\C)),  \ (\GL_{n+1}(\R), \GL_n(\R)), \\
\label{GP2}& & (\GL_{n+1}(\mathbb{H}), \GL_n(\mathbb{H})), \  (\operatorname{U}(p+1,q), \operatorname{U}(p,q)), \\
\label{GP3}& & (\SO(n+1,\C), \SO(n,\C)), \ (\SO(p+1,q), \SO(p,q))\, .\end{eqnarray}

We recall from \cite{KKSS1} the polar decomposition for real spherical spaces 
\begin{equation}\label{polar} Z= \Omega A_Z^- F\cdot z_0\end{equation}
where 
\begin{itemize}
\item $\Omega$ is a compact set of the type $F'K $ with $F'\subset G$ a finite set. 
\item $F\subset G$ is a finite set with the property that $F\cdot z_0 = T \cdot z_0 \cap Z$
where $T=\exp(i\af)$ and the intersection is taken in $Z_\C= G_\C/H_\C$. 
\end{itemize}

\subsection{Volume growth} \label{Factorization}   
Define $\rho_Q\in\af^*$ by $\rho_Q(X) = {1\over 2} {\rm tr} (\ad_{\uf} X)$, 
$X\in \af$. It follows from the unimodularity of $Z$ and the local structure 
theorem that $\rho_Q|_{\af_H}=0$, i.e. 
$\rho_Q\in \af_Z^* =\af_H^\perp$.

\begin{lemma}\label{vg lemma} Let $Z=G/H$ be a wavefront real spherical space. 
Then 
\begin{equation}\label{asymp ball volume}
|B_R|\asymp \sup_{X\in \af\atop \|X\|\leq R} e^{2\rho_Q(X)}= \sup_{X\in \af_Z^-\atop 
\|X\|\leq R} e^{-2\rho_Q(X)}
\, .
\end{equation}
\end{lemma}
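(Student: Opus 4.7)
The plan is to use the polar decomposition (\ref{polar}) together with its associated integration formula to transport $|B_R|$ into an integral over $\af_Z^-$, and then to extract the exponential rate by a Laplace-type estimate.

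Dispose first of the second equality, which is purely combinatorial. Since $\rho_Q$ vanishes on $\af_H$, it descends to $\af_Z=\af/\af_H$, identified with the $\kappa$-orthogonal complement $\af_H^\perp\subset\af$; under this identification the ball $\{X\in\af:\|X\|\leq R\}$ projects onto the ball of the same radius in $\af_Z$, so the first supremum equals $e^{2R\|\rho_Q\|}$, where $\|\rho_Q\|$ is the $\kappa$-dual norm. Let $\rho_Q^\sharp\in\af_Z$ be the $\kappa$-dual of $\rho_Q$. Since $\rho_Q$ is a non-negative combination of roots in $\Sigma_\uf$, the element $\rho_Q^\sharp$ lies in the image of $\af^+$ in $\af_Z$; by the wavefront identity $\af_Z^-$ contains the image of $\af^-$, hence $-\rho_Q^\sharp\in\af_Z^-$. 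It follows that the second supremum is also $e^{2R\|\rho_Q\|}$, attained at $X^*=-R\rho_Q^\sharp/\|\rho_Q^\sharp\|$.

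For the comparison $|B_R|\asymp\sup$, feed the characteristic function $\1_{B_R}$ into the polar integration formula associated with (\ref{polar}),
\[
\int_Z f\,dz = \sum_{w\in F}\int_\Omega\int_{\af_Z^-} f(\omega\exp(X)w\cdot z_0)\,J_w(X)\,dX\,d\omega,
\]
whose Jacobians satisfy $J_w(X)\asymp e^{-2\rho_Q(X)}$ on $\af_Z^-$ (cf.\ \cite{KKSS1}). Lemma \ref{distortions} and Lemma \ref{distance Z}, combined with the compactness of $\Omega$ and of $F\cdot z_0$, give $\bigl|\|\omega\exp(X)w\cdot z_0\|_Z-\|X\|\bigr|\leq C$ uniformly; together with the exponential regularity of Lemma \ref{volumes} this yields
\[
|B_R|\;\asymp\;\int_{\{X\in\af_Z^-:\,\|X\|\leq R\}} e^{-2\rho_Q(X)}\,dX.
\]
The lower bound is immediate: a fixed-size $\kappa$-ball around $X^*$, which by wavefrontness lies in the interior of $\af_Z^-$, contributes a constant multiple of $e^{2R\|\rho_Q\|}$. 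The matching upper bound is a standard Laplace estimate — split the integration into the $\rho_Q^\sharp$-direction (which produces the exponential) and the transverse directions (which integrate to a bounded constant).

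The principal obstacle is the Jacobian estimate $J_w(X)\asymp e^{-2\rho_Q(X)}$ uniformly on all of $\af_Z^-$. On the interior of the cone it is a direct consequence of the local structure theorem for real spherical spaces; uniform control up to the walls, where the polar decomposition degenerates, is the delicate technical input to be imported from \cite{KKSS1}. Once that estimate is granted, the rest of the argument reduces to the analytic steps sketched above.
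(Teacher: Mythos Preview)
Your approach hinges on a ``polar integration formula'' attached to the decomposition
$Z=\Omega A_Z^- F\cdot z_0$, with a two–sided Jacobian asymptotic $J_w(X)\asymp e^{-2\rho_Q(X)}$
that you attribute to \cite{KKSS1}. This is where the argument breaks down. The polar
decomposition of \cite{KKSS1} is established only as a set–theoretic surjection, not as a
generically injective smooth map, and no integration formula of the shape you write is proved
there (nor the uniform Jacobian estimate you need, including at the walls of $\af_Z^-$). In
effect you have pushed the entire content of the lemma into an unproved black box. A secondary
issue: the norm comparison $\bigl|\,\|\omega\exp(X)w\cdot z_0\|_Z-\|X\|\,\bigr|\le C$ needs the
lower bound $\|\exp(X)w\cdot z_0\|_Z\ge\|X\|-C$, which is not a direct consequence of Lemmas
\ref{distortions}--\ref{distance Z}; those only give the upper bound, since for $X\in\af_Z$
one does not in general have $X\in\qf\cap\sf$.

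The paper avoids both problems by arguing the two bounds separately and asymmetrically. For the
lower bound it invokes the volume inequality $\vol_Z(Ba\cdot z_0)\ge Ca^{2\rho_Q}$ from
\cite{KKSS2}, Prop.~4.2, which already contains the required Jacobian control. For the upper
bound it does \emph{not} use the polar decomposition of $Z$ at all; instead it uses the Cartan
decomposition of $G$ to get $B_R\subset KA_R^-K\cdot z_0$, then bounds
$\vol_Z(K\Omega_A aK\cdot z_0)$ via the Jacobian of an explicit map $\Psi_a$. This first yields
only the crude exponent $a^{-2\rho}$; the passage to the sharp exponent $a^{-2\rho_Q}$ is the
genuine technical step and is obtained by averaging over the little Weyl group
$W_L\subset W_{H\cap K}$ (using that $L_{\rm n}\subset H$ and that $a\in A_Z$ is $W_{H\cap K}$-fixed).
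Your outline contains no analogue of this $\rho\to\rho_Q$ refinement, and it is precisely here
that the wavefront and local-structure input are actually used.
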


Here the expression $f(R)\asymp g(R)$ signifies that the ratio $\frac{f(R)}{g(R)}$ 
remains bounded below and above as $R$ tends to infinity.

\begin{proof} First note that the equality in (\ref{asymp ball volume}) is 
immediate from the wavefront assumption. 

\par Let us first show the lower bound, i.e. there exists 
a $C>0$ such that for all $R>0$ one has 
$$ |B_R|\geq C \sup_{X\in \af\atop \|X\|\leq R} e^{2\rho_Q(X)}\,.$$
For that we recall the volume bound from \cite{KKSS2}, Prop. 4.2: 
for all compact 
subsets $B\subset G$ with non-empty interior there exists a 
constant $C>0$ such that $\vol_Z (Ba\cdot z_0) \geq C a^{2\rho_Q}$
for all $a \in A_Z^-$. Together with the polar decomposition 
(\ref{polar}) this gives us the lower bound. 

\par As for the upper bound let 
$$\af_R^-:=\{ X\in \af^-\mid \|X\|\leq R\}\,  .$$ 
Observe that $B_R\subset B_R':= K A_R^- K \cdot z_0$. In the sequel it is convenient to realize 
$A_Z$ as a subgroup of $A$ (and not as quotient): we identify $A_Z$ with $A_H^\perp \subset A$. 
The upper bound will 
follow if we can show that 
$$|B_R'|\leq C \sup_{X\in \af\atop \|X\|\leq R} e^{2\rho_Q(X)}\qquad (R>0)\,.$$
for some constant $C>0$. This in turn will follow from the 
argument for the upper bound in the proof of Prop. 4.2 in \cite{KKSS2}:
in this proof we considered for $a\in A_Z^-$ the map 

$$\Phi_a: K \times \Omega_A \times \Xi \to G, \ \ (k, b, X )\mapsto kb
\exp(\Ad(a) X)$$
where $\Omega_A\subset A$ is a compact neighborhood of ${\bf 1}$
and $\Xi\subset \hf $
is a compact neighborhood of $0$. It was shown that the Jacobian of $\Phi_a$, 
that is $\sqrt{\det (d\Phi_a d\Phi_a^t)}$, is bounded by $Ca^{-2\rho_Q}$. 
Now this bounds holds as well for the right $K$-distorted 
map 
$$\Psi_a:  K \times \Omega_A \times K \times \Xi \to G, \ \ (k,b,k',X)
\mapsto kb
\exp(\Ad(ak') X)\, .$$

The reason for that comes from an inspection of the proof; all 
what is needed is the following fact: let $d:=\dim \hf$ and consider the 
action of $\Ad(a)$ on $V=\bigwedge^d \gf$. Then for $a\in A^-$ we have  
$$ a^{-2\rho} \geq \sup_{v\in V, \atop \|v\|=1} \la \Ad(a) v , v\ra\, .$$  
We deduce an upper bound 

\begin{equation}\label{rho-bound} \vol_Z(K \Omega_A a K \cdot z_0)\leq C a^{-2\rho}\, .\end{equation}
We need to improve that bound from $\rho$ to $\rho_Q$ on the right hand side of (\ref{rho-bound}).
For that let $W_L$ be the Weyl group of the reductive pair $(\lf, \af)$. 
Note that $\rho_Q={1\over |W_L|} \sum_{w\in W_L} w\cdot \rho$.  Further, the local structure 
theorem implies that $L_{\rm n}\subset H$ and hence $W_L$ can be realized as a 
subgroup of $W_{H\cap K}:= N_{H\cap K}(\af)/ Z_{H\cap K}(\af)$. We choose $\Omega_A$ to be invariant 
under $N_{H\cap K}(\af)$ and observe that $a\in A_Z$ is fixed under $W_{H\cap K}$.  Thus 
using the $N_{H\cap K}(\af)$-symmetry in the $a$-variable we refine (\ref{rho-bound}) to  
$$  \vol_Z(K \Omega_A a K \cdot z_0)\leq C a^{-2\rho_Q}\,.$$
The desired bound then follows. 
\end{proof}

\begin{cor} \label{factor} Let $Z=G/H$ be a wavefront real spherical space 
of reductive type.  Let $Z\to Z^\star$ be a basic factorization
such that $Z^\star$ is not compact. 
Then the geometric balls $B_R$ factorize well to $Z^\star$. 
\end{cor}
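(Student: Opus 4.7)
The plan is to verify criterion \eqref{CRITERION}. By Lemma \ref{volumes} applied to $\mathcal F$ we have $|B_{R+R_0}^{\mathcal F}|\le e^{cR_0}|B_R^{\mathcal F}|$ with $c$ depending only on $\mathcal F$, so it suffices to establish
\[
\lim_{R\to\infty}\frac{|B_R^{\mathcal F}|}{|B_R|}=0.
\]
I would produce this by showing that $|B_R|$ has strictly larger exponential growth rate than $|B_R^{\mathcal F}|$, the excess rate coming from the extra Cartan directions contributed by the non-compact base $Z^\star$.

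Since the factorization is basic, $\hf^\star=\hf+\gf_I$ for some $I\subset\{1,\ldots,m\}$, and the ideal decomposition $\gf=\gf_I\oplus\gf_{I^c}$ is compatible with the subgroup structure. Consequently the fiber identifies with $\mathcal F\cong G_I/(G_I\cap H)$ and $Z^\star$ is effectively a $G_{I^c}$-homogeneous space; both inherit the wavefront real spherical property from $Z$. Accordingly $\af_Z$ splits orthogonally as $\af_\mathcal F\oplus\af_{Z^\star}$, one has $\af_\mathcal F^-+\af_{Z^\star}^-\subset\af_Z^-$, and the decomposition $\uf=\uf_I\oplus\uf_{I^c}$ gives $\rho_Q|_{\af_\mathcal F}=\rho_{Q_\mathcal F}$ and $\rho_Q|_{\af_{Z^\star}}=\rho_{Q^\star}$, where the latter are the half-sum-of-roots attached to the $\mathcal F$- and $Z^\star$-adapted parabolics.

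Applying Lemma \ref{vg lemma} to $\mathcal F$ and to $Z^\star$, set
\[
\alpha_\mathcal F:=\sup\{-2\rho_{Q_\mathcal F}(X)\mid X\in\af_\mathcal F^-,\,\|X\|=1\},\qquad \alpha_{Z^\star}:=\sup\{-2\rho_{Q^\star}(Y)\mid Y\in\af_{Z^\star}^-,\,\|Y\|=1\},
\]
so that $|B_R^{\mathcal F}|\asymp e^{\alpha_\mathcal F R}$ and $|B_R^\star|\asymp e^{\alpha_{Z^\star}R}$, each up to polynomial factors. Because $Z^\star$ is not compact, $|B_R^\star|\to\infty$, and Lemma \ref{vg lemma} forces $\alpha_{Z^\star}>0$ (otherwise the sup in its volume asymptotic would remain bounded). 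Choosing unit-norm near-maximizers $X^*\in\af_\mathcal F^-$, $Y^\dagger\in\af_{Z^\star}^-$ and testing $|B_R|$ against $X=aX^*+bY^\dagger\in\af_Z^-$ with $a,b\ge 0$ and $a^2+b^2\le R^2$, orthogonality yields $\|X\|\le R$ and $-2\rho_Q(X)=a\alpha_\mathcal F+b\alpha_{Z^\star}$. Maximising this linear form by Cauchy--Schwarz gives
\[
|B_R|\gtrsim e^{R\sqrt{\alpha_\mathcal F^2+\alpha_{Z^\star}^2}},
\]
so $|B_R^{\mathcal F}|/|B_R|$ decays exponentially at rate $\sqrt{\alpha_\mathcal F^2+\alpha_{Z^\star}^2}-\alpha_\mathcal F>0$, which is strictly positive since $\alpha_{Z^\star}>0$.

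The main obstacle is the second paragraph above: making precise that a basic factorization of a wavefront real spherical space propagates the wavefront spherical structure to the fiber $\mathcal F$ and base $Z^\star$, together with the clean additive splittings of $\af_Z^-$ and of $\rho_Q$. This ultimately rests on the fact that $I$ indexes a union of simple ideals of $\gf$, so that the $Z$-adapted parabolic $Q$ and its nilradical $\uf$ respect the direct sum $\gf=\gf_I\oplus\gf_{I^c}$; once this structural input is granted, the Cauchy--Schwarz step closes the argument.
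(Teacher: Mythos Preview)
Your approach is correct and follows the same strategy as the paper: apply Lemma~\ref{vg lemma} to the fiber, compare exponential growth rates, and conclude via criterion~\eqref{CRITERION}. The paper's execution, however, is more direct. It works with the \emph{unrestricted} side of Lemma~\ref{vg lemma}, namely $|B_R|\asymp\sup_{X\in\af,\,\|X\|\le R}e^{2\rho_Q(X)}$ and $|B_R^{\mathcal F}|\asymp\sup_{X\in\af_I,\,\|X\|\le R}e^{2\rho_{Q_I}(X)}$, so the comparison reduces to $\|\rho_{Q_I}\|<\|\rho_Q\|$, which is immediate from the orthogonal splitting $\rho_Q=\rho_{Q_I}+\rho_{Q_{I^c}}$ together with the observation that $\uf_I\subsetneq\uf$ (equivalently $\uf_{I^c}\neq 0$) exactly when $Z^\star$ is not compact. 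No appeal to $Z^\star$ or to the compression cones is needed.

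By contrast, you work on the compression-cone side, also invoke Lemma~\ref{vg lemma} for $Z^\star$, and recombine via Cauchy--Schwarz. This is fine but costs you extra structural hypotheses: you need $Z^\star$ wavefront, the cone inclusion $\af_{\mathcal F}^-+\af_{Z^\star}^-\subset\af_Z^-$, and a clean orthogonal decomposition of $\af_Z$. These are the items you rightly flag as ``the main obstacle''. The paper sidesteps all of them by staying on the $\sup_{\af}$ side of~\eqref{asymp ball volume}, where the product structure $\af=\af_I\oplus\af_{I^c}$ and the additivity of $\rho_Q$ are automatic from the ideal decomposition of $\gf$. Your Pythagorean inequality $\sqrt{\alpha_{\mathcal F}^2+\alpha_{Z^\star}^2}>\alpha_{\mathcal F}$ is then nothing other than $\|\rho_Q\|>\|\rho_{Q_I}\|$ in disguise.
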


\begin{proof} 
As $Z\to Z^\star$ is basic we may assume (ignoring connected components)
that $H^\star= G_I H$ for some $I$. Note that 
${\mathcal F}=H^\star/H \simeq  G_I / G_I \cap H$ is real spherical.  

Let $Q$ be the $Z$-adapted parabolic subgroup attached to $P$.
Let $P_I = P \cap G_I$ and $ G_I \supset Q_I \supset P_I $ be the ${\mathcal F}$-adapted 
parabolic above $P_I$  and note that $Q_I=Q\cap G_I$. With Lemma \ref{vg lemma} we then get
$$|B_R^{\mathcal F}|\asymp \sup_{X\in \af_I\atop \|X\|\leq R} e^{2\rho_{Q_I}(X)}\, ,$$ 
which we are going to compare with (\ref{asymp ball volume}).

Let $\uf_I$ be the Lie algebra of the unipotent radical of $Q_I$. Note that $\uf_I \subset \uf$
and that this inclusion is strict since $G/H^\star$ is not compact.
The corollary now follows from (\ref{CRITERION}). 
\end{proof}

\subsection{Property I}

We briefly recall some results from \cite{KKSS2}.

\par Let $(\pi,\Hc_{\pi})$ be a unitary irreducible
representation of $G$.  We denote by $\Hc_\pi^{\infty}$ the $G$-Fr\'echet module of smooth vectors
and by $\Hc_\pi^{-\infty}$ its strong dual. One calls $\Hc_\pi^{-\infty}$ the $G$-module of 
distribution vectors; it is a DNF-space with continuous $G$-action. 

\par Let $\eta \in (\Hc_\pi ^{-\infty})^H$ be an $H$-fixed element and $H_\eta<G$ the 
stabilizer of $\eta$. 
Note that $H<H_\eta$ and set $Z_\eta:= G/H_\eta$.  
With regard to $\eta$ and $v\in \Hc^\infty$ we form the generalized matrix-coefficient 
$$m_{v,\eta}(gH):= \eta(\pi(g^{-1}) v) \qquad (g\in G)$$
which is a smooth function on $Z_\eta$.

We recall the following facts from \cite{KKSS2} Thm.~7.6 and Prop.~7.7:

\begin{prop} \label{propI}Let $Z$ be a wavefront real spherical space of reductive type. 
Then the following assertions hold: 
\begin{enumerate} 
\item\label{one} Every generalized matrix coefficient $m_{v,\eta}$ as above is bounded.
\item Let $H< H^\star<G$ be a closed subgroup such that $Z^\star$ is unimodular. Then 
$Z^\star$ is a weakly basic factorization. 
\item\label{two} Let $(\pi, \Hc)$ be a unitary irreducible representation of $G$ and 
let $\eta \in (\Hc_\pi^{-\infty})^H$.  Then: 
\begin{enumerate} 
\item $Z\to Z_\eta$ is a weakly basic factorization.  
\item $Z_\eta$ is unimodular and there exists $1\leq p <\infty$ such that
$m_{v,\eta}\in L^p(Z_\eta)$ for all $v \in \Hc_\pi^\infty$. 
\end{enumerate}
\end{enumerate}
\end{prop}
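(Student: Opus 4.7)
\noindent\emph{Proof proposal.} The proposition collects results from \cite{KKSS2} that the body of the paper uses as a black box. My plan would proceed in the order (1), (2), (3)(a), (3)(b), with the sharpness estimate in (3)(b) being the genuine technical point. For (1), I would reduce to an estimate on the compression cone via the polar decomposition $Z = \Omega A_Z^- F\cdot z_0$ of (\ref{polar}); since $\Omega F$ is compact and $m_{v,\eta}$ is continuous, it suffices to control $a\mapsto \eta(\pi(a^{-1})v)$ for $a\in A_Z^-$. The wavefront hypothesis $\af_Z^-=\af^-+\af_H$ permits replacing $a$ by an element of $A^-$ modulo $A_H$, and a Casselman-type asymptotic expansion of $\pi(a^{-1})v$ along $A^-$ produces a finite sum of exponential terms $a^\lambda$ whose exponents $\lambda$ are constrained by the unitarity of $\pi$. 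Pairing with $\eta$ annihilates those exponents which do not vanish on $\af_H$, and the survivors are dominated by $\rho_Q$ on $\af^-$, which yields boundedness after passage to $A_Z^-$.

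For (2), I would apply the local structure theorem to $H^\star$ in parallel with $H$, producing a $Z^\star$-adapted parabolic $Q^\star\supset Q$ and a linear map $T^\star$ analogous to (\ref{Tmap}); the unimodularity hypothesis then translates to $\rho_{Q^\star}$ vanishing on $\af\cap\hf^\star$. An induction on $\dim\hf^\star/\hf$ splits the enlargement into steps dictated by the simple-ideal decomposition $\gf=\gf_1\oplus\cdots\oplus\gf_m$: a simple ideal $\gf_i$ is either absorbed entirely into $\hf^\star$, giving a basic step, or contributes only compactly modulo $\hf$, giving a co-compact step via Lemma \ref{co-comp factor}. Unimodularity of each intermediate quotient is preserved along the chain, which yields the sequence (\ref{factor sequence}).

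For (3), the unimodularity of $Z_\eta$ is the key point: the bounded function $m_{v,\eta}$ from (1) together with the $H_\eta$-invariance of $\eta$ forces, via the Weil integration formula along the fibre $H_\eta/H$, the modular character of $H_\eta$ to be trivial; part (a) then follows from (2) applied to $H^\star=H_\eta$. For (b), the $L^p$-integrability rests on a strict refinement of (1): unitarity of $\pi$ ensures that $m_{v,\eta}$ decays along $A_Z^-$ at a rate strictly sharper than the square-root of the volume growth $e^{2\rho_{Q^\star}}$ from Lemma \ref{vg lemma} applied to $Z_\eta$, so choosing $p$ large enough that $p$ times the decay exponent dominates $2\rho_{Q^\star}$ produces an integrable bound.

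The main obstacle is the exponent analysis underlying (3)(b): while (1) gives only pointwise boundedness, $L^p$ for some finite $p$ requires extracting a strict spectral gap between the exponents of $m_{v,\eta}$ and $\rho_{Q^\star}$, through a Casselman-type expansion uniform in $v\in\Hc_\pi^\infty$ combined with a convexity analysis of the unitary exponents relative to the wavefront compression cone of $Z_\eta$. The wavefront hypothesis is what bridges the spectral asymptotics on $G$ to the geometric volume bounds on $Z$ and its factorizations, and without it one would lose the crucial match between $\af_Z^-$ and $\af^-+\af_H$ on which the entire argument depends.
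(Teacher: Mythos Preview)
The paper does not prove Proposition~\ref{propI}; it is stated without argument as a recall of Theorem~7.6 and Proposition~7.7 from \cite{KKSS2}. Your opening sentence correctly identifies this, so there is no in-paper proof to compare your sketch against.

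As an outline of the \cite{KKSS2} arguments your sketch is broadly on target: the polar decomposition plus leading-exponent analysis is indeed the mechanism behind (1), and the comparison of decay exponents against the volume weight $e^{2\rho_{Q^\star}}$ is what drives (3)(b). Two points where your sketch is thinner than the actual development: for (2), the induction you describe does not by itself explain why an arbitrary unimodular intermediate $H^\star$ must decompose into \emph{only} basic and co-compact steps---the argument in \cite{KKSS2} leans more heavily on the structure theory of real spherical subgroups than your simple-ideal bookkeeping suggests; and for (3), your derivation of the unimodularity of $Z_\eta$ from the boundedness in (1) via a Weil-integration argument along $H_\eta/H$ is not the route taken and is not obviously valid as stated (boundedness of a single matrix coefficient does not directly force triviality of the modular character). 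In \cite{KKSS2} the unimodularity of $Z_\eta$ comes out of the structural description of $H_\eta$ that underlies (2), after which (3)(a) is immediate.
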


The property of $Z=G/H$ that (\ref{two}b) is valid for all 
$\pi$ and $\eta $ as above is denoted {\it Property (I)} in \cite{KKSS2}. 
Note that (\ref{one}) and (\ref{two}b) together imply $m_{v,\eta}\in L^{q}(Z_\eta)$ for $q>p$.
Assuming Property (I) we can then make the following notation.

\begin{definition}\label{defi p_H}
Given $\pi$ as above, define $p_{H}(\pi)$
as the smallest index $\ge 1$ such that
all $K$-finite generalized matrix coefficients $m_{v, \eta}$
with $\eta\in (\Hc_\pi^{-\infty})^H $
belong to $L^{p}(Z_\eta)$ for any $p>p_{H}(\pi)$.
\end{definition}

Notice that $m_{v,\eta}$ belongs to $L^p(Z_\eta)$ for
all $K$-finite vectors $v$ once that this is the case for
some non-trivial such vector $v$, see \cite{KKSS2} Lemma 7.2. 
For example, this could be the trivial $K$-type,
if it exists in $\pi$.

It follows from finite dimensionality of
$(\Hc_\pi^{-\infty})^H$ (see \cite{KS})
that $p_H(\pi)<\infty$.
We say that $\pi$ is $H$-tempered if $p_{H}(\pi)=2$.

\par The representation $\pi$ is said to be
$H$-distinguished if
$(\Hc_\pi^{-\infty})^H \ne \{0\}$.
Note that if $\pi$ is not $H$-distinguished
then $p_{H}(\pi)=1$.

\section{Lattice point counting: setup} \label{setup}

Let $G/H$ be a real algebraic homogeneous space. 
We further assume that we are given a lattice (a discrete subgroup with
finite covolume) $\Gamma\subset G$, such that $\Gamma_H:=\Gamma\cap H$ is
a lattice in $H$. We
normalize Haar measures on $G$ and $H$ such that:
\begin{itemize} \item $\vol (G/\Gamma)=1$.
\item $\vol (H/\Gamma_H)=1$.
\end{itemize}

\par Our concern is with the double fibration

$$ \xymatrix{ &G/\Gamma_H \ar[ld] \ar[rd] & \\ 
Z:=G/H & & Y:=G/\Gamma& }$$
Fibre-wise integration yields transfer maps from functions on
$Z$ to functions
on $Y$ and vice versa. In more precision,
\begin{equation} \label{I1}
L^\infty (Y)\to L^\infty(Z), \ \phi\mapsto \phi^H; \ \phi^H(gH)
:=\int_{H/ \Gamma_H} \phi(gh\Gamma)\ d(h\Gamma_H)\end{equation}
and we record that this map is contractive, i.e
\begin{equation} \label{I1a} \|\phi^H\|_\infty \leq \|\phi\|_\infty
\qquad (\phi \in L^\infty(Y))\, .\end{equation}
Likewise we have
\begin{equation}\label{I2}  L^1(Z) \to L^1(Y), \ f\mapsto f^\Gamma;
\ f^\Gamma(g\Gamma):=\sum_{\gamma\in
\Gamma/ \Gamma_H} f(g\gamma H)\, , \end{equation}
which is contractive, i.e
\begin{equation} \label{I2a} \|f^\Gamma\|_1 \leq \|f\|_1
\qquad (f \in L^1(Z))\, .\end{equation}
Unfolding with respect to the double fibration yields, in view of our
normalization of measures,  the following adjointness relation:
\begin{equation} \label{ufo} \la f^\Gamma, \phi \ra_{L^2(Y)}=
 \la f, \phi^H\ra_{L^2(Z)}
\end{equation}
for all $\phi\in L^\infty (Y)$ and $f\in L^1(Z)$.
Let us note that (\ref{ufo}) applied
to $|f|$ and $\phi=\1_{Y}$ readily yields
(\ref{I2a}).

We write $\1_R \in L^1(Z)$ for the characteristic function of
$B_R$ and deduce from the definitions and (\ref{ufo}):
\begin{itemize}
\item $\1_R^\Gamma(e\Gamma)=N_R(\Gamma, Z):=
\#\{ \gamma\in \Gamma/\Gamma_H\mid \gamma\cdot z_0\in B_R\}$.
\item $\|\1_R^\Gamma\|_{L^1(G/\Gamma)}= |B_R|$.
\end{itemize}

\subsection{Weak asymptotics}

In the above setup, $G/H$ need not be of reductive type, but we shall
assume this again from now on. 
For spaces with property {\propertyUI}  and $Y$ compact
we prove analytically  in the following section that 
\begin{equation}\tag{\MTi}\label{counting}
N_{R}(\Gamma, Z) \sim |B_{R}|\,\quad
(R\to\infty)\, .\end{equation}
For that we will use the following result of \cite{KSS1}:

\begin{theorem}\label{th=1} Let $Z=G/H$ be of reductive type. 
The smooth vectors for the regular representation
of $G$ on $L^p(Z)$ vanish at infinity, for all $1\le p<\infty $.
\end{theorem}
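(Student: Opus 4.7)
The approach is to combine the Dixmier--Malliavin factorization of smooth vectors with a Young-type convolution estimate on $Z=G/H$. By Dixmier--Malliavin, any smooth vector $f \in L^p(Z)^\infty$ for the regular representation of $G$ can be written as a finite sum
$$f = \sum_i \phi_i * F_i,\qquad \phi_i \in C_c^\infty(G),\ F_i \in L^p(Z),$$
with $(\phi * F)(z) := \int_G \phi(g) F(g^{-1}z)\, dg$. By linearity it suffices to show $\phi * F \in C_0(Z)$ whenever $\phi \in C_c^\infty(G)$ and $F \in L^p(Z)$.

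Given $\e > 0$ I would split $F = F_1 + F_2$ with $F_1 \in C_c(Z)$ and $\|F_2\|_p < \e$. The piece $\phi * F_1$ is continuous with compact support in $\supp(\phi)\cdot\supp(F_1)$, hence trivially vanishes at infinity. What remains is the uniform Young-type bound
$$\|\phi * F_2\|_{L^\infty(Z)} \le C(\phi)\,\|F_2\|_{L^p(Z)},$$
which by H\"older applied to the defining integral reduces to the kernel estimate
$$\sup_{z\in Z} \int_U |F_2(g^{-1}z)|^p\, dg \le C(U)\,\|F_2\|_{L^p(Z)}^p,\qquad U := \supp\phi.$$
Once this is in hand, $\|\phi * F_2\|_\infty \le C'(\phi)\,\e$, and sending $\e \to 0$ concludes the proof.

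The main obstacle is the displayed kernel estimate. Using the Weil integration formula for $G \to Z$ together with unimodularity, its left-hand side rewrites as $\int_Z |F_2(w)|^p\,\Phi_z(w)\,dw$ with
$$\Phi_z(w) = \int_H \1_U(g_z h g_w^{-1})\, dh,$$
where $g_z\cdot z_0 = z$ and $g_w\cdot z_0 = w$. Combining left- and right-$H$-invariance of $\mu_H$ with the $G$-equivariance of $\pi: G \to Z$, the bound on $\Phi_z$ reduces to the single statement that $\sup_{g\in G}\mu_H(gU\cap H) < \infty$. This last estimate follows by a direct argument: if $gU\cap H \ne \emptyset$, then $g\in H\cdot U^{-1}$, and writing $g = h u^{-1}$ with $h\in H$, $u\in U$, the $H$-invariance of $\mu_H$ gives $\mu_H(gU\cap H)=\mu_H(u^{-1}U\cap H)\le \mu_H(U^{-1}U\cap H)<\infty$, independently of $g$. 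With this uniform kernel bound the Young estimate holds uniformly in $z$, and the theorem follows. The reductive hypothesis on $H$ plays its role upstream, in ensuring the existence of an invariant measure on $Z$ and the Mostow decomposition (\ref{Mostow}) on which the rest of the paper's geometric setup depends.
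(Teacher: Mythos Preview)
The paper does not supply its own proof of this statement; it is quoted from \cite{KSS1}. Your Dixmier--Malliavin reduction to a convolution bound is the natural opening, and the treatment of $\phi*F_1$ is fine. The gap lies in the kernel estimate for $\phi*F_2$. The quantity you must bound uniformly is the \emph{two-sided} translate
\[
\Phi_z(w)=\mu_H\bigl(g_z^{-1}Ug_w\cap H\bigr),
\]
not the one-sided $\mu_H(gU\cap H)$. Your one-sided computation is correct: if $gU\cap H\neq\emptyset$ then $g=hu^{-1}$ and left $H$-invariance of $\mu_H$ gives $\mu_H(gU\cap H)\le\mu_H(U^{-1}U\cap H)$. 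But applying this trick to $g_z^{-1}Ug_w$ removes only one of the two flanking factors; after using $H$-bi-invariance one is left with the conjugation bound
\[
\sup_{g\in G}\mu_H\bigl(g^{-1}(U^{-1}U)g\cap H\bigr)<\infty,
\]
and neither $H$-invariance of $\mu_H$ nor $G$-equivariance of $\pi$ eliminates the remaining $g$.

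This conjugation bound is exactly where the reductive hypothesis on $H$ does its work, contrary to your closing remark. Unimodularity of $H$ already suffices for the invariant measure on $Z$, and for merely unimodular $H$ the bound can fail: with $G=\SL_2(\R)$, $H=N$ the upper unipotent subgroup, and $g=\diag(t^{-1},t)$ one has $g n_s g^{-1}=n_{s/t^{2}}$, so $\mu_N(g^{-1}Vg\cap N)\asymp t^{2}\to\infty$. Correspondingly the theorem itself fails for $Z=\SL_2(\R)/N\simeq\R^2\setminus\{0\}$ with Lebesgue measure: any smooth compactly supported bump equal to $1$ near the origin is a smooth vector in $L^p(Z)$ that does not vanish at the boundary point $0$. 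So the kernel estimate, as you have written it, is incomplete precisely at the step where the hypothesis of the theorem becomes essential; establishing the conjugation bound for reductive $H$ requires a genuine structural argument (e.g.\ via Cartan decomposition and the fact that $\hf$ cannot be contracted into a nilpotent direction), which is the real content of \cite{KSS1}.
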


\par With notation from (\ref{I2}) we set
$$F_R^\Gamma:= {1\over |B_{R}|} \1_R^\Gamma.$$
We shall concentrate on verifying the following limit of weak type:
\begin{equation}\tag{\WMTi}\label{eq1a}  \la F_R^\Gamma, \phi\ra_{L^2(Y)} \to
\int_Y \bar\phi\, d \mu_{Y}
\quad (R\to\infty),\qquad (\forall\phi \in C_0(Y))\,.
\end{equation}
Here $C_0$ indicates functions vanishing at infinity.

\begin{lemma} {\rm (\ref{eq1a})} $\Rightarrow$
{\rm (\ref{counting})}.
\end{lemma}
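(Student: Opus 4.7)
The plan is to extract the pointwise statement $N_R(\Gamma,Z)/|B_R|\to 1$ from the weak statement (\ref{eq1a}) by sandwiching $N_R$ between integrals against a bump function supported near $e\Gamma$ in $Y$. The weak convergence alone cannot detect the pointwise value $\1_R^\Gamma(e\Gamma)$, so I need to exploit the fact that $\1_R^\Gamma$ is ``almost translation invariant'' on a small neighborhood of $e\Gamma$, in the sense that it is sandwiched by the nearby counting functions $N_{R\pm\epsilon}$.

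Concretely, fix $\epsilon>0$ and pick a non-negative $\phi_\epsilon\in C_c(Y)\subset C_0(Y)$ supported in a neighborhood of $e\Gamma$ small enough to be the homeomorphic image of $B_\epsilon^G$ under $G\to Y$ (possible since $\Gamma$ is discrete), normalized so $\int_Y\phi_\epsilon\,d\mu_Y=1$. The key step is the following pointwise sandwich: for every $g\in B_\epsilon^G$,
$$ N_{R-\epsilon}(\Gamma,Z)\;\le\;\1_R^\Gamma(g\Gamma)\;\le\;N_{R+\epsilon}(\Gamma,Z). $$
Indeed, by the triangle-type inequality (\ref{distance inequality}) one has $\|\gamma\cdot z_0\|_Z-\epsilon\le\|g\gamma\cdot z_0\|_Z\le\|\gamma\cdot z_0\|_Z+\epsilon$, so the set $\{\gamma\in\Gamma/\Gamma_H:g\gamma\cdot z_0\in B_R\}$ sits between $\{\gamma:\gamma\cdot z_0\in B_{R-\epsilon}\}$ and $\{\gamma:\gamma\cdot z_0\in B_{R+\epsilon}\}$. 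Integrating the sandwich against $\phi_\epsilon$ (which is supported where the inequality is valid and has total mass $1$) yields
$$ N_{R-\epsilon}(\Gamma,Z)\;\le\;\la \1_R^\Gamma,\phi_\epsilon\ra_{L^2(Y)}\;\le\;N_{R+\epsilon}(\Gamma,Z). $$

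Dividing by $|B_R|$ and invoking the hypothesis (\ref{eq1a}) applied to $\phi_\epsilon\in C_0(Y)$ gives $\la F_R^\Gamma,\phi_\epsilon\ra\to 1$, so
$$ \limsup_{R\to\infty}\frac{N_{R-\epsilon}(\Gamma,Z)}{|B_R|}\;\le\;1\;\le\;\liminf_{R\to\infty}\frac{N_{R+\epsilon}(\Gamma,Z)}{|B_R|}. $$
Shifting $R\mapsto R\pm\epsilon$ and applying the volume comparison of Lemma \ref{volumes} (which provides $|B_{R\pm\epsilon}|\le e^{c\epsilon}|B_R|$ and the reverse), we conclude
$$ e^{-c\epsilon}\;\le\;\liminf_{R\to\infty}\frac{N_R(\Gamma,Z)}{|B_R|}\;\le\;\limsup_{R\to\infty}\frac{N_R(\Gamma,Z)}{|B_R|}\;\le\;e^{c\epsilon}, $$
and letting $\epsilon\to 0$ finishes the proof.

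The argument is essentially soft; there is no serious obstacle. The only nontrivial ingredients are the pointwise sandwich (guaranteed by the geometric distortion estimate (\ref{distance inequality}) from the previous section) and the moderate growth of the ball volume (Lemma \ref{volumes}), both of which are available in the stated generality.
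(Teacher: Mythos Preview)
Your argument is correct and is precisely the approach the paper invokes: it cites \cite{DRS}, Lemma 2.3, together with Lemma \ref{volumes} and Lemma \ref{distortions} (your use of (\ref{distance inequality}) is equivalent to the latter). The sandwich via a bump function $\phi_\epsilon$, the shift $R\mapsto R\pm\epsilon$, and the volume comparison are exactly the ingredients of that lemma, so there is nothing to add.
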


\begin{proof} As in \cite{DRS} Lemma 2.3 this is deduced from Lemma \ref{volumes}
and Lemma \ref{distortions}. 
\end{proof}

\section{Main term counting}\label{Mt II}
In this section we will establish main term counting
under the mandate of property \propertyUI\ and
$Y$ being compact. Let us call a family of balls 
$(B_R)_{R>0}$ 
{\it well factorizable} if it factorizes well to all 
proper factorizations of type $Z\to Z_\eta$.

\subsection{Main theorem on counting}

\begin{theorem}\label{leading term theorem}
Let $G$ be semi-simple and $H$ a closed reductive 
subgroup.
Suppose that $Y$ is compact and $Z$ admits \propertyUI.
If $(B_R)_{R>0}$ is well factorizable, then 
{\rm (\ref{eq1a})} and  {\rm (\ref{counting})} hold.
\end{theorem}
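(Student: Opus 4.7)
The plan is to verify the weak convergence (\ref{eq1a}), from which the pointwise asymptotic (\ref{counting}) follows by the preceding lemma. Since $\|F_R^\Gamma\|_{L^1(Y)}=1$, the family of linear functionals $\phi\mapsto\langle F_R^\Gamma,\phi\rangle_{L^2(Y)}$ is uniformly bounded on $C(Y)=C_0(Y)$ in the sup norm. By the discrete spectral decomposition $L^2(Y)=\mathbb{C}\cdot\1_Y\oplus\bigoplus_j\mathcal{H}_{\pi_j}$ (available because $Y$ is compact) together with standard density arguments for smooth functions on the compact manifold $Y$, it therefore suffices to test (\ref{eq1a}) on $\phi=\1_Y$ (where both sides equal $1$) and on smooth vectors $\phi\in\mathcal{H}_\pi^\infty$ inside an arbitrary nontrivial irreducible subrepresentation $\pi\subset L^2(Y)$, in which case the right-hand side $\int_Y\bar\phi\,d\mu_Y$ vanishes by orthogonality to $\1_Y$.

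For such a $\phi$ the adjointness relation (\ref{ufo}) applied with $f=\1_R$ gives
\[
\langle F_R^\Gamma,\phi\rangle_{L^2(Y)}\;=\;\frac{1}{|B_R|}\int_{B_R}\overline{\phi^H(z)}\,dz,
\]
and the period $\phi^H(gH)=\int_{H/\Gamma_H}\phi(gh\Gamma)\,d(h\Gamma_H)$ is identified with the generalized matrix coefficient $m_{\phi,\eta}$, where $\eta\in(\mathcal{H}_\pi^{-\infty})^H$ is the $H$-period functional $v\mapsto\int_{H/\Gamma_H}v(h\Gamma)\,d(h\Gamma_H)$. If $\eta=0$ there is nothing to prove. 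Otherwise let $H_\eta\supset H$ be its $G$-stabilizer. Irreducibility and nontriviality of $\pi$ force $(\mathcal{H}_\pi^{-\infty})^G=0$, so $H_\eta\ne G$. Property \propertyUI\ furnishes some $1\le p<\infty$ with $m_{\phi,\eta}\in L^p(Z_\eta)$; continuity by the closed-graph theorem of the $G$-equivariant map $v\mapsto m_{v,\eta}\colon\mathcal{H}_\pi^\infty\to L^p(Z_\eta)$ exhibits $m_{\phi,\eta}$ as a smooth vector of the regular representation on $L^p(Z_\eta)$, so Theorem \ref{th=1} yields $m_{\phi,\eta}\in C_0(Z_\eta)$.

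To convert this vanishing at infinity on $Z_\eta$ into decay of the average over $B_R$ I pass through the fibration $\mathcal{F}:=H_\eta/H\to Z\to Z_\eta$. Since $m_{\phi,\eta}$ is $H_\eta$-invariant, the integration formula (\ref{integral with Z*}) rewrites
\[
\int_{B_R}\overline{m_{\phi,\eta}(z)}\,dz\;=\;\int_{Z_\eta}\overline{m_{\phi,\eta}(\xi)}\,\1_R^{\mathcal{F}}(\xi)\,d\xi.
\]
Given $\varepsilon>0$, choose a compact $C\subset Z_\eta$ with $|m_{\phi,\eta}(\xi)|<\varepsilon$ off $C$, and split the integral. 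Using $\int_{Z_\eta}\1_R^{\mathcal{F}}=|B_R|$, the contribution from the complement of $C$ is at most $\varepsilon|B_R|$. The contribution from $C$ is at most $\|m_{\phi,\eta}\|_\infty\cdot\vol(C)\cdot\sup_{\xi\in C}\1_R^{\mathcal{F}}(\xi)$. When $H_\eta=H$ the fibration is trivial and this supremum is bounded by $1$, so that term is $O(1/|B_R|)\to 0$. When $H_\eta\supsetneq H$ the factorization $Z\to Z_\eta$ is proper, and the well-factorizability hypothesis (after lifting $C$ to a compact subset of $G$) gives $\sup_{\xi\in C}\1_R^{\mathcal{F}}(\xi)=o(|B_R|)$. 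In either case $\limsup_R|\langle F_R^\Gamma,\phi\rangle|\le\varepsilon$, and letting $\varepsilon\to 0$ concludes.

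The main obstacle I anticipate is the matching in the third paragraph: the vanishing at infinity of $m_{\phi,\eta}$ on $Z_\eta$ (delivered by property \propertyUI\ and Theorem \ref{th=1}) must be balanced against the potentially growing fibrewise mass $\1_R^{\mathcal{F}}$ on compact pieces of $Z_\eta$. The well-factorizability hypothesis is precisely the device that rules out concentration of the balls $B_R$ on such compact pieces, and it is the conceptual bridge from the harmonic analysis on $Z$ to the equidistribution of the densities $F_R^\Gamma$ on $Y$.
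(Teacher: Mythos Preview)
Your proof is correct and follows essentially the same route as the paper: reduce by density to test functions lying in a single irreducible subrepresentation of $L^2(Y)$, unfold to identify $\phi^H$ with a generalized matrix coefficient $m_{\phi,\eta}$, invoke property \propertyUI\ together with Theorem~\ref{th=1} to get $m_{\phi,\eta}\in C_0(Z_\eta)$, and then split the integral over $Z_\eta$ using well-factorizability to kill the contribution from compact sets. The paper packages this as a criterion (Proposition~\ref{prop: criterion} and Remark~\ref{first remark about UI'}) applied to the span $\A(Y)$ of $K$-spherical matrix coefficients---exploiting the $K$-invariance of $F_R^\Gamma$ to reduce the class of test functions---and deduces smoothness in $L^p$ from $K$-finiteness rather than your closed-graph argument, but these are cosmetic differences.
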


\begin{rmk}  In case $Z=G/H$ is real spherical and wavefront, 
then $Z$ has \propertyUI\ by Proposition  \ref{propI}.  If we assume in addition
that $G$ has no compact factors and that 
all proper factorizations are basic, then the family of 
geometric balls is well factorizable by Corollary \ref{factor}. 
In particular, Theorem \ref{thmA} of 
the introduction then follows from the above.
\end{rmk}

\par The proof is based on the following proposition.
For a function space $\F(Y)$ consisting of integrable functions on
$Y$ we denote by $\F(Y)_{\rm van}$ the subspace of functions with vanishing
integral over $Y$.

\begin{prop}\label{prop: criterion}
Let $Z=G/H$ be of reductive type. Assume that
there exists a dense subspace
$\mathcal{A}(Y) \subset
C_{b}(Y)^{K}_{\rm van}$ such that
\begin{equation}\label{PH}
\phi^{H}\in C_0(Z) \quad \hbox{for all $\phi\in
\mathcal{A}(Y)$}\,.
\end{equation}
Then {\rm (\ref{eq1a})} holds true.
\end{prop}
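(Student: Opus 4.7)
The plan is to view the family of functionals $\phi\mapsto\langle F_R^\Gamma,\phi\rangle_{L^2(Y)}$ as uniformly bounded on $C_b(Y)$, verify the target limit on the distinguished dense set $\mathcal{A}(Y)$ by a direct computation via adjointness, and then conclude by a three-epsilon argument. The only genuinely nontrivial input is the $C_0$-decay of $\psi^H$ on $Z$ for $\psi\in\mathcal{A}(Y)$, which is assumed.

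First I would reduce to $K$-invariant test functions of vanishing integral. Since $B_R\subset Z$ is left $K$-invariant, so is $F_R^\Gamma$, and left $K$-invariance of $\mu_Y$ implies that $\langle F_R^\Gamma,\phi\rangle$ depends on $\phi$ only through the $K$-average $\phi^K(y):=\int_K\phi(ky)\,dk$, which lies in $C_0(Y)^K$. Writing $c:=\int_Y\phi\,d\mu_Y$ (well defined because $\mu_Y$ is a probability measure), one obtains $\phi^K-c\in C_b(Y)^K_{\rm van}$, and using $\|F_R^\Gamma\|_{L^1(Y)}=1$, which follows from (\ref{ufo}) applied to $f=|B_R|^{-1}\1_R$ and the constant test function $\1_Y$, one gets
\[\langle F_R^\Gamma,\phi\rangle_{L^2(Y)}=\bar c+\langle F_R^\Gamma,\phi^K-c\rangle_{L^2(Y)}.\]
Since $\bar c=\int_Y\bar\phi\,d\mu_Y$ is precisely the target, it remains to prove $\langle F_R^\Gamma,\psi_0\rangle\to 0$ for every $\psi_0\in C_b(Y)^K_{\rm van}$.

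The second step is to verify this limit on $\mathcal{A}(Y)$. For $\psi\in\mathcal{A}(Y)$, applying (\ref{ufo}) with $f=|B_R|^{-1}\1_R$ gives
\[\langle F_R^\Gamma,\psi\rangle_{L^2(Y)}=\frac{1}{|B_R|}\int_{B_R}\overline{\psi^H(z)}\,dz.\]
By hypothesis $\psi^H\in C_0(Z)$, so given $\varepsilon>0$ one can choose a compact $C\subset Z$ with $|\psi^H|<\varepsilon$ outside $C$. Splitting the integral yields $|\langle F_R^\Gamma,\psi\rangle|\le \|\psi^H\|_\infty |C|/|B_R|+\varepsilon$, which is at most $2\varepsilon$ once $R$ is large, since $|B_R|\to\infty$ (the balls exhaust $Z$, and Lemma~\ref{volumes} prevents degenerate behaviour of the volume). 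Thus $\langle F_R^\Gamma,\psi\rangle\to 0=\int_Y\bar\psi\,d\mu_Y$.

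Finally I would extend by density. Given $\psi_0\in C_b(Y)^K_{\rm van}$ and $\varepsilon>0$, pick $\psi\in\mathcal{A}(Y)$ with $\|\psi_0-\psi\|_\infty<\varepsilon$. Then the $L^1$--$L^\infty$ estimate $|\langle F_R^\Gamma,\psi_0-\psi\rangle|\le \|F_R^\Gamma\|_{L^1(Y)}\|\psi_0-\psi\|_\infty<\varepsilon$ is uniform in $R$, so together with the previous step one obtains $\limsup_R|\langle F_R^\Gamma,\psi_0\rangle|\le\varepsilon$ for every $\varepsilon>0$, hence the limit is $0$. The real obstacle is not internal to this proposition but lies elsewhere in the paper: producing a subspace $\mathcal{A}(Y)$ whose $H$-periods actually vanish at infinity on $Z$ will require property \propertyUI\ together with Theorem \ref{th=1}.
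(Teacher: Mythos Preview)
The proposal is correct and follows essentially the same route as the paper: reduce to $K$-invariant test functions of vanishing integral, verify the limit on $\mathcal{A}(Y)$ via the unfolding identity (\ref{ufo}) together with the $C_0$ hypothesis on $\psi^H$, and extend by density using $\|F_R^\Gamma\|_{L^1(Y)}=1$. Two minor remarks: the claim that $\phi^K\in C_0(Y)^K$ should read $C_b(Y)^K$ (you are allowing $\phi\in C_b(Y)$, and this is all that is needed), and the invocation of Lemma~\ref{volumes} for $|B_R|\to\infty$ is misplaced---that lemma only gives an upper bound on volume growth, whereas divergence of $|B_R|$ follows directly from the balls exhausting the (infinite-volume) space $Z$.
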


\begin{proof} We will establish (\ref{eq1a})
for $\phi\in C_b(Y)$.
As $$C_b(Y)=C_b(Y)_{\rm van}\oplus \C\1_Y,$$ and
(\ref{eq1a}) is trivial for $\phi$ a constant,
it suffices to establish
\begin{equation}\label{eq1b}
\la F_R^\Gamma, \phi\ra_{L^2(Y)} \to 0 \qquad (\phi \in C_b(Y)_{\rm van})\, .
\end{equation}
We will show (\ref{eq1b}) is valid for $\phi \in \mathcal{A}(Y)$.
By density, as $F_R^\Gamma$ is $K$-invariant and belongs to $L^{1}(Y)$,
this will finish the proof.

Let $\phi\in\mathcal{A}(Y)$ and let $\epsilon>0$.
By the unfolding identity (\ref{ufo}) we have
\begin{equation}\label{equnfold}
 \la F_R^\Gamma, \phi \ra_{L^2(Y)}= \frac{1}{|B_R|} \la
\1_R, \phi^H\ra_{L^2(Z)}.
\end{equation}
Using (\ref{PH}) we choose
$K_{\epsilon} \subset Z$ compact such
that $|\phi^{H}(z)|<\epsilon$ outside of $K_{\epsilon}$. Then
\begin{equation*} \frac{1}{|B_R|} \la \1_R, \phi^{H}\ra_{L^2(Z)}=
\int_{K_{\epsilon}}
+ \int_{Z-K_{\epsilon}}\quad
\frac{\1_R(z)}{|B_R|}\phi^{H}(z)\,d\mu_{Z}(z)\, .\end{equation*}
By (\ref{I1a}), the first term is bounded by
$\frac{|K_{\epsilon}|||\phi||_{\infty}}{|B_R|}$, which
is $\leq \epsilon$ for $R$ sufficiently large.
As the second term is bounded by $\epsilon$ for all $R$,
we obtain (\ref{eq1b}). Hence (\ref{eq1a}) holds.
\end{proof}

\begin{rmk}\label{first remark about UI'}
It is possible to replace (\ref{PH}) by a weaker requirement:
Suppose that an algebraic sum
\begin{equation}\label{deco A(Y)}
 \A(Y)= \sum_{j\in J} \A(Y)_j
\end{equation}
is given
together with a factorization
$Z^\star_j=G/H^\star_j$ for each $j\in J$. Suppose that the balls $B_R$  all 
factorize well to $Z^\star_j$, $j\in J$. 
Suppose further that $\phi^H$ 
factorizes to a function 
\begin{equation}\label{PH*} \phi^{H_j^\star}\in C_0(Z_j^\star)\, \end{equation}
for all $\phi\in \A(Y)_j$ and all $j\in J$.
Then the conclusion
in Proposition \ref{prop: criterion} is still valid. 
In fact, using (\ref{integral with Z*})
the last part of the proof modifies to:
\begin{align*} \frac{1}{|B_R|} \la \1_R, \phi^{H}\ra_{L^2(Z)} &=
\frac{1}{|B_R|} \la \1_R^\mathcal{F}, \phi^{H^\star_j}\ra_{L^2(Z^\star_j)}=\\
&= \int_{K_{\epsilon}^\star}
+ \int_{Z^\star_j-K_{\epsilon}^\star}\quad
\frac{\1_R^\mathcal{F}(z)}{|B_R|}\phi^{H^\star_j}(z)\,d\mu_{Z^\star_j}(z)\, \end{align*}
for $\phi\in \A(Y)_j$.
As $\|1_R^\mathcal{F}\|_{L^1(Z^\star_j)}=|B_R|$,  the second term is bounded by $\epsilon$
for all $R$. As the balls factorize well 
to $Z^\star_j$ we get 
the  first term as small as we wish with
(\ref{factor limit}).
\end{rmk}

\subsection{The space $\A(Y)$}

We now construct a specific subspace $\A(Y)\subset C_b(Y)_{\rm van}^K$ 
and verify condition (\ref{PH*}).

\par Denote by $\hat G_s \subset \hat G$ the $K$-spherical unitary dual. 
\par As $Y$ is compact, the abstract Plancherel-theorem implies:
$$L^2(G/\Gamma)^K \simeq \bigoplus_{\pi \in \hat G_s} (\Hc_\pi^{-\infty})^\Gamma.$$
If we denote the Fourier transform
by
$f\mapsto f^\wedge$ then the corresponding
inversion formula is given by
\begin{equation}\label{inversion}
f=\sum_{\pi} a_{v_\pi,f^\wedge(\pi)}.
\end{equation}
Here $a_{v_\pi,f^\wedge(\pi)}$ denotes a matrix coefficient for $Y$
with $v_\pi\in \Hc_\pi$ normalized $K$-fixed
and $f^\wedge(\pi)\in (\Hc_\pi^{-\infty})^\Gamma$, 
and the sum in (\ref{inversion})
is required to include multiplicities.
The
matrix coefficients for $Y$ are defined as in (\ref{matrix coeff}), 
that is
\begin{equation}
a_{v,\nu}(y) = \nu(g^{-1}\cdot v) \qquad (y=gH\in Y)\, .
\end{equation}
for $v\in \Hc_\pi$ and $\nu\in (\Hc_\pi^{-\infty})^\Gamma$.

Note that $L^2(Y)= L^2(Y)_{\rm van} \oplus \C \cdot \1_Y $. We define
$\mathcal{A}(Y)\subset L^2(Y)_{\rm van}^K$ to be the dense subspace
of functions with finite Fourier support, that is, 
$$\A(Y)=\operatorname{span}\{ a_{v,\nu}  \mid
\pi\in \hat G_s \text{ non-trivial}, v\in \Hc_\pi^K, \nu\in (\Hc_\pi^{-\infty})^\Gamma\}.$$
Then $\A(Y)\subset L^2(Y)^{K,\infty}_{\rm van}$ is dense
and since $C^\infty(Y)$ and $L^2(Y)^\infty$ are topologically
isomorphic, it follows that
$\mathcal{A}(Y)$ is dense in $C(Y)^K_{\rm van}$
as required.

The following lemma together with 
Remark \ref{first remark about UI'} immediately
implies Theorem \ref{leading term theorem}.

\begin{lemma}\label{existA} Assume that $Y$ is compact
and $Z$ has \propertyUI, and define $\A(Y)$ as above.
Then there exists a decomposition of $\A(Y)$ satisfying
{\rm (\ref{deco A(Y)})-(\ref{PH*})}. 
\end{lemma}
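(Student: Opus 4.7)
The plan is to label each matrix coefficient $a_{v,\nu} \in \A(Y)$ by a factorization $Z^\star_\eta = G/H_\eta$ obtained from the $H$-periodization of $\nu$, and then recognize $\phi^H$ as a generalized matrix coefficient on that factorization to which Property~(I) and Theorem \ref{th=1} directly apply.

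First, for each non-trivial $\pi \in \hat G_s$ and each non-zero $\nu \in (\Hc_\pi^{-\infty})^\Gamma$, define
$$\eta_{\pi,\nu}(w) := \int_{H/\Gamma_H} \nu\bigl(\pi(h)^{-1} w\bigr)\, d(h\Gamma_H), \qquad w \in \Hc_\pi^\infty.$$
The integrand is $\Gamma_H$-invariant by the $\Gamma$-invariance of $\nu$; continuity of $\nu$ on the Fr\'echet space $\Hc_\pi^\infty$ together with the finiteness of the measure on $H/\Gamma_H$ makes $\eta_{\pi,\nu}$ a well-defined distribution vector, and unimodularity of $H$ upgrades it to an element of $(\Hc_\pi^{-\infty})^H$. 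Choose a basis $\{\nu_\pi^{(i)}\}$ of $(\Hc_\pi^{-\infty})^\Gamma$, let $J$ index the pairs $j=(\pi,i)$ for which $\eta_j := \eta_{\pi,\nu_\pi^{(i)}} \ne 0$, and set
$$H^\star_j := H_{\eta_j}, \qquad Z^\star_j := G/H^\star_j, \qquad \A(Y)_j := \operatorname{span}\{a_{v,\nu_\pi^{(i)}} : v \in \Hc_\pi^K\}.$$
Terms with $\eta_j = 0$ contribute trivially to $\phi^H$ and can be placed in any $\A(Y)_j$, so (\ref{deco A(Y)}) is immediate.

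For $\phi = a_{v,\nu_j} \in \A(Y)_j$ the fibre-integral unfolds as
$$\phi^H(gH) = \int_{H/\Gamma_H} \nu_j\bigl(\pi(h)^{-1}\pi(g)^{-1} v\bigr)\, d(h\Gamma_H) = \eta_j\bigl(\pi(g)^{-1} v\bigr) = m_{v,\eta_j}(gH),$$
which is $H^\star_j$-invariant and hence descends to a smooth function $\phi^{H^\star_j}$ on $Z^\star_j$. By Property~(I), $Z^\star_j$ is unimodular and $m_{v,\eta_j} \in L^p(Z^\star_j)$ for some $p < \infty$; as $v$ is $K$-finite, $m_{v,\eta_j}$ is moreover a smooth vector for the regular representation on $L^p(Z^\star_j)$, so Theorem \ref{th=1} delivers $\phi^{H^\star_j} \in C_0(Z^\star_j)$, verifying (\ref{PH*}). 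For those $j$ with $H^\star_j = H$ the factorization is trivial and $\phi^H \in C_0(Z)$ directly; for the remaining $j$ the factorization $Z^\star_j$ is of type $Z_\eta$ and proper, and the hypothesis that $(B_R)$ is well factorizable supplies (\ref{factor limit}). Remark \ref{first remark about UI'} then applies.

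The main obstacle is the first step: establishing rigorously that $\eta_{\pi,\nu}$ is a genuine element of $(\Hc_\pi^{-\infty})^H$. One must argue that $h \mapsto \pi(h)^{-1}\nu$ is continuous as a map into the DNF space $\Hc_\pi^{-\infty}$, that the resulting weak integral defines a continuous functional on $\Hc_\pi^\infty$, and that $H$-invariance passes through the unimodular Haar measure on $H/\Gamma_H$ despite $\nu$ being merely $\Gamma_H$-invariant. Once this periodization is placed on firm footing, the identification $\phi^H = m_{v,\eta_j}$, the $L^p$ bound, and the vanishing at infinity are immediate consequences of the inputs \emph{already in hand}, namely Property (I), Theorem \ref{th=1}, and the well-factorizability of $(B_R)$.
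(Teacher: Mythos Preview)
Your proof is correct and follows essentially the same route as the paper: define the $H$-periodization $\eta=\Lambda_\pi(\nu)$ via vector-valued integration over the compact cycle $H/\Gamma_H$, identify $\phi^H$ with the generalized matrix coefficient $m_{v,\eta}$ on $Z_\eta$, and then invoke Property~(I) together with Theorem~\ref{th=1} (using that $v$ is $K$-finite, hence $m_{v,\eta}$ is a smooth vector in $L^p(Z_\eta)$) to obtain $\phi^{H^\star_j}\in C_0(Z^\star_j)$. The only cosmetic difference is that the paper indexes $J$ by the distinct stabilizers $H_\eta$ (grouping together all $(\pi,\nu)$ with the same $H_{\Lambda_\pi(\nu)}$), whereas you index by basis pairs $(\pi,i)$; both yield (\ref{deco A(Y)})--(\ref{PH*}), and your discussion of well-factorizability, while not strictly part of the lemma's statement, correctly anticipates how it feeds into Remark~\ref{first remark about UI'}.
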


\begin{proof} 
The map
$\phi\mapsto \phi^H$ from (\ref{I1})
corresponds on the spectral side to a map
$(\Hc_\pi^{-\infty})^\Gamma \to (\Hc_\pi^{-\infty})^H$, which can
be constructed as follows.

As  $H/ \Gamma_H$ is
compact, we can define for each $\pi\in\hat G_s$
\begin{equation} \label{vint}
\Lambda_{\pi}: (\Hc_\pi^{-\infty})^\Gamma \to (\Hc_\pi^{-\infty})^H, \ \
\Lambda_{\pi}(\nu)=\int_{H/\Gamma_H} \nu\circ \pi(h^{-1}) \ d
(h\Gamma_H)\end{equation}
by $\Hc_\pi^{-\infty}$-valued integration: the defining integral is understood
as integration over a compact fundamental domain $F\subset H$ with respect
to the Haar measure on $H$; as the integrand is continuous and
$\Hc_\pi^{-\infty}$ is a complete locally convex space,
the integral converges in $\Hc_\pi^{-\infty}$. It follows from (\ref{vint})
that $(a_{v,\nu})^H=m_{v,\Lambda_\pi(\nu)}$ for all $v\in\Hc_\pi^\infty$ and
$\nu\in (\Hc_\pi^{-\infty})^\Gamma$.

Let $J$ denote the set of all factorizations $Z^\star\to Z$, 
including also $Z^\star=Z$ which we give the index $j_0\in J$.
For $j\in J$ we
define $\A(Y)_j\subset \A(Y)$ accordingly to
be spanned by the matrix coefficients $a_{v,\nu}$
for which $H_{\Lambda_\pi(\nu)}=H^\star_j$. Then (\ref{deco A(Y)}) holds.

\par Let $\phi\in\mathcal{A}(Y)_{j_0}$, then it follows
from (\ref{inversion}) that
\begin{equation}\label{Fourier phi^H}
\phi^H =\sum_{\pi\neq \1} m_{v_\pi,\Lambda_\pi(\phi^\wedge(\pi))} \,.
\end{equation}
Note that $H_\eta=H$ for each distribution vector $\eta={\Lambda_\pi(\phi^\wedge(\pi))}$ in this sum,
by the definition of $\mathcal{A}(Y)_{j_0}$.
As $Z$ has property \propertyUI{} the summand $m_{v_\pi,\Lambda_\pi(\phi^\wedge(\pi))}$
is contained in
$L^p(G/H)$ for $p>p_H(\pi)$, and by \cite{KKSS2}, Lemma 7.2, this containment is then
valid for all $K$-finite generalized matrix
coefficients $m_{v,\Lambda_\pi(\phi^\wedge(\pi))}$ of $\pi$.
Thus $m_{v_\pi,\Lambda_\pi(\phi^\wedge(\pi))}$
generates a Harish-Chandra module inside $L^p(G/H)$. As $m_{v_\pi,\Lambda_\pi(\phi^\wedge(\pi))}$
is $K$-finite, we conclude that it is a smooth vector. Hence
$\phi^H\in L^p(G/H)^\infty$, and in view of Theorem \ref{th=1} we obtain (\ref{PH}).
\par 
The proof of (\ref{PH*})
for $\phi\in \A(Y)_j$ for general $j\in J$ is obtained by the same reasoning,
where one replaces  $H$ by $H_j^\star$ in (\ref{vint}) and (\ref{Fourier phi^H}).
\end{proof}

This concludes the proof of
Theorem \ref{leading term theorem}.

\section{$L^p$-bounds for generalized matrix coefficients}\label{Section Lp-bounds}

From here on we assume that $Z=G/H$ is wavefront and real spherical. 
Recall that we assumed that $G$ is semi-simple and that we wrote 
$\gf=\gf_1\oplus\ldots\oplus\gf_m$ for the decomposition 
of $\gf$ into simple factors. It is no big loss of generality 
to assume that $G=G_1\times \ldots \times G_m$ splits accordingly. 
We will assume that from now on. 

\par Further we request that the lattice $\Gamma<G$ is irreducible, that 
is,  the projection of $\Gamma$ to any normal subgroup $J\subsetneq G$
is dense in $J$.

\par Let $\pi$ be an irreducible unitary representation of $G$. Then 
$\pi=\pi_1 \otimes\ldots\otimes \pi_m$ with $\pi_j$ and irreducible 
unitary representation of $G_j$.  We start with a simple 
observation. 

\begin{lemma} \label{pi-trivial}Let $(\pi, {\mathcal H})$ be an irreducible unitary 
representation of $G$ and $0\neq \nu \in ({\mathcal H}^{-\infty})^\Gamma$. 
If one constituent $\pi_j$ of $\pi$ is trivial, then $\pi$ is 
trivial. 
\end{lemma}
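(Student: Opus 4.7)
The plan is to exploit the irreducibility of $\Gamma$ to upgrade the $\Gamma$-invariance of $\nu$ to full $G$-invariance, and then to use irreducibility of $\pi$ to conclude that $\pi$ is the trivial representation.

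Write $G=G_j\times G^j$ with $G^j:=\prod_{i\ne j} G_i$ and let $p^j\colon G\to G^j$ be the projection. If $m=1$ then $\pi=\pi_1$ is already trivial, so assume $m\ge 2$; then $G^j$ is a proper normal subgroup of $G$, and the irreducibility hypothesis on $\Gamma$ guarantees that $p^j(\Gamma)$ is dense in $G^j$. Since $\pi_j$ is trivial, the representation $\pi$ factors through $p^j$, so for every $\gamma\in\Gamma$ the identity $\pi(\gamma)\nu=\nu$ reads $\pi(p^j(\gamma))\nu=\nu$; that is, $\nu$ is fixed by the dense subgroup $p^j(\Gamma)\subset G^j$. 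Continuity of the dual action of $G^j$ on $\mathcal{H}^{-\infty}$ in the strong dual topology, a standard property of unitary representations, then promotes this to invariance of $\nu$ under all of $G^j$; combined with the trivial action of $G_j$ we obtain $\pi(g)\nu=\nu$ for every $g\in G$.

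It remains to observe that a nonzero $G$-invariant vector $\nu\in\mathcal{H}^{-\infty}$ can only occur when $\pi$ is trivial. Indeed, pairing against $\nu$ defines a continuous nonzero $G$-equivariant functional $\mathcal{H}^\infty\to\mathbb{C}$ (with trivial action on the target), and restricting to $K$-finite vectors yields a nonzero $(\gf,K)$-module homomorphism from the Harish-Chandra module of $\pi$ to the trivial module; algebraic irreducibility of the Harish-Chandra module then forces this map to be an isomorphism and $\pi$ to be trivial. The one delicate step in the argument is the passage from $p^j(\Gamma)$-invariance to $G^j$-invariance of $\nu$ via density, but this is a routine application of continuity of the dual representation and is not a real obstacle; the essential content is the combination of irreducibility of $\Gamma$ with the continuous structure on distribution vectors.
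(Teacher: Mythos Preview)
Your argument is correct and follows essentially the same route as the paper: both use that $\pi$ factors through the projection $G\to G^j$ and that the image of $\Gamma$ is dense there. The paper phrases the conclusion dually, embedding $\mathcal{H}^\infty$ into $C^\infty(G^j/\Gamma_{G^j})$ via matrix coefficients and noting that the target consists only of constants, whereas you work directly with $\nu\in\mathcal{H}^{-\infty}$; these are equivalent formulations of the same density-plus-continuity step.
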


\begin{proof} 
The element $\nu$ gives rise to   
a $G$-equivariant injection  
\begin{equation}\label{nu-coeff} 
{\mathcal H}^\infty \hookrightarrow  C^\infty(Y), 
\ \ v\mapsto (g\Gamma\mapsto \nu (\pi(g^{-1})v))\, .\end{equation}
Say $\pi_j$ is trivial and let $J:= \prod_{i=1\atop i\neq j}^m G_i$.
Let $\Gamma_J$ be the projection of $\Gamma$ to $J$. Then 
(\ref{nu-coeff}) gives rise to a $J$-equivariant injection 
${\mathcal H}^\infty \hookrightarrow  C^\infty(J/\Gamma_J)$. 
As $\Gamma_J$ is dense in $J$, the assertion follows. 
\end{proof}

We assume from now on that the cycle $H/\Gamma_H\subset Y$ is compact.
This technical condition ensures that the vector valued average 
map (\ref{vint}) 
converges. 

\begin{lemma}\label{finite quotient} 
Let $(\pi, {\mathcal H})$ be a non-trivial irreducible unitary 
representation of $G$. Let $\nu \in (\Hc_\pi^{-\infty})^\Gamma$ such that 
$\eta:=\Lambda_\pi(\nu)\in (\Hc_\pi^{-\infty})^H$ is non-zero. 
Then $H_\eta/H$ is compact. 
\end{lemma}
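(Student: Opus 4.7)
My plan is to combine the structural information about $H_\eta$ from Proposition~\ref{propI} with the absence of $G_i$-invariant distribution vectors in non-trivial irreducible unitary representations of non-compact simple Lie groups.

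By Proposition~\ref{propI}.(3a) the factorization $Z\to Z_\eta$ is weakly basic, and Lemma~\ref{two-step} yields an intermediate subgroup $H\subset H_b\subset H_\eta$ with $H_\eta/H_b$ compact and $\hf_b=\hf+\gf_I$ basic for some $I\subset\{1,\dots,m\}$. It therefore suffices to verify that $H_b/H$ is compact, and since every simple factor $G_i$ of $G$ is non-compact, this in turn reduces to showing $\gf_i\subset\hf$ for every $i\in I$.

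Arguing by contradiction, I suppose that $\gf_i\not\subset\hf$ for some $i\in I$. Since $\gf_i\subset\hf_\eta$ and $G_i$ is connected, $G_i\subset H_\eta$, so $\eta$ is $G_i$-invariant as an element of $\Hc^{-\infty}$. Decomposing $\pi=\pi_i\otimes\pi_{i^c}$ on $\Hc=\Hc_i\,\hat\otimes\,\Hc_{i^c}$ with $i^c=\{1,\dots,m\}\setminus\{i\}$, and invoking the nuclear identification $\Hc^\infty\cong\Hc_i^\infty\,\hat\otimes\,\Hc_{i^c}^\infty$, every $w\in\Hc_{i^c}^\infty$ produces a partial evaluation $\eta_w\in\Hc_i^{-\infty}$ by $\eta_w(v):=\eta(v\otimes w)$, and the $G_i$-invariance of $\eta$ translates to $\eta_w\in(\Hc_i^{-\infty})^{G_i}$. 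For non-trivial irreducible unitary $\pi_i$, a non-zero such $\eta_w$ would supply a non-zero continuous $G_i$-equivariant linear map from the topologically irreducible smooth module $\Hc_i^\infty$ to the trivial representation, contradicting Schur's lemma. Hence either $\pi_i$ is trivial, or every $\eta_w$ vanishes and by density $\eta=0$; the latter contradicts the hypothesis, while the former, via Lemma~\ref{pi-trivial}, contradicts the non-triviality of $\pi$.

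The principal technical step is the tensor-product identification enabling the passage from $G_i$-invariance of the full distribution vector $\eta$ to $G_i$-invariance of its partial evaluations; this rests on the nuclearity of the smooth Fr\'echet modules from Casselman--Wallach theory. An alternative that sidesteps this identification is to analyze the matrix coefficient $m_{v_0,\eta}$ for some $v_0$ with $\eta(v_0)\neq 0$ directly: the normality of $G_i$ in $G$ together with $G_i\subset H_\eta$ renders $m_{v_0,\eta}$ two-sided $G_i$-invariant on $G$, and one then reads off triviality of $\pi_i$ on the cyclic $G$-module generated by $v_0$.
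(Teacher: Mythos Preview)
Your proof is correct and follows the same structural skeleton as the paper: invoke Proposition~\ref{propI} and Lemma~\ref{two-step} to obtain the basic intermediate $\hf_b=\hf_I$, then argue that each $\pi_i$ with $i\in I$ must be trivial and conclude $I=\emptyset$ via Lemma~\ref{pi-trivial}. The difference lies only in how triviality of $\pi_i$ is established. The paper observes directly that the embedding $\Hc^\infty\hookrightarrow C^\infty(Z_\eta)\hookrightarrow C^\infty(Z_b)$ lands in a space on which $G_i$ acts trivially (by normality of $G_i$ together with $G_i\subset H_b$), whereas your primary route passes through partial evaluation of $\eta$ along the tensor decomposition and Schur's lemma. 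Your ``alternative'' via two-sided $G_i$-invariance of $m_{v_0,\eta}$ is in fact precisely the paper's argument, and is the cleaner path since it avoids appealing to the nuclear tensor identification. One cosmetic remark: your contradiction hypothesis $\gf_i\not\subset\hf$ is never actually used---the derivation only needs $i\in I$ (whence $G_i\subset H_\eta$)---so you are really proving $I=\emptyset$ and hence $\hf_b=\hf$, exactly as the paper phrases it.
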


\begin{proof}
Recall from Proposition \ref{propI} that 
$Z\to Z_\eta$ is weakly basic,
and from Lemma \ref{two-step} that then there exists $H\subset H_b \subset H_\eta$ such that
$H_\eta/H_b$ is compact and $Z\to Z_b$ is basic. Hence
$\hf_b=\hf_I$ for some $I$.
As $\pi$ is irreducible it infinitesimally embeds into
$C^\infty(Z_\eta)$ and hence also to $C^\infty(Z_b)$ on which $G_i$ acts trivially
for $i\in I$.
It follows that $\pi_i$ is trivial for $i\in I$. Hence Lemma \ref{pi-trivial} implies
$I=\emptyset$ and thus $\hf_b=\hf$.
\end{proof}

In the sequel we use the Plancherel theorem
(see \cite{HC})
$$L^2(G/\Gamma)^K \simeq \int_{\hat G_s}^\oplus  \V_{\pi,\Gamma}
\ d \mu(\pi)\,, $$
where $\V_{\pi,\Gamma}\subset (\Hc_\pi^{-\infty})^\Gamma$ is a finite
dimensional subspace and of constant dimension on each
connected component in the continuous spectrum (parametrization by
Eisenstein series), and where the Plancherel measure $\mu$
has support
$$\hat G_{\Gamma, s}:=\supp(\mu)\subset\hat G_s\, .$$

\par Given an irreducible lattice $\Gamma \subset G$ we
define
(cf.~Definition \ref{defi p_H})
\begin{equation}\label{ic} 
p_H(\Gamma):=\sup\{p_H(\pi): \pi\in\hat G_{\Gamma,s} \}
\end{equation}
and record the following.

\begin{lemma}\label{pHG} 
Assume that $G=G_1 \times\ldots\times G_m$ with all $\gf_i$ simple and non-compact. Then
$p_H(\Gamma)< \infty$.
\end{lemma}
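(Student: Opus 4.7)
The plan is to combine the pointwise finiteness of $p_H(\pi)$ from Proposition~\ref{propI} with structural constraints on the support of the Plancherel measure in $\hat G_s$.

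First, I would dispose of the trivial representation (which contributes a trivially finite value) and reduce to non-trivial $\pi\in\hat G_{\Gamma,s}$. By Lemma~\ref{pi-trivial} combined with the irreducibility of $\Gamma$ in $G=G_1\times\cdots\times G_m$, every such $\pi$ decomposes as $\pi=\pi_1\otimes\cdots\otimes\pi_m$ with each $\pi_i$ a non-trivial irreducible unitary representation of $G_i$. The compactness of $H/\Gamma_H$ makes the averaging operator $\Lambda_\pi$ of~\eqref{vint} well-defined, and for each non-zero $\eta=\Lambda_\pi(\nu)$ Lemma~\ref{finite quotient} shows that $H_\eta/H$ is compact; hence the $L^p$-integrability of $m_{v,\eta}$ on $Z_\eta$ is equivalent, up to bounded factors from the compact fiber, to that of its pullback on $Z$.

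Next, I would invoke the Plancherel decomposition of $L^2(G/\Gamma)^K$, which presents $\hat G_{\Gamma,s}$ as the union of a tempered continuous piece (parametrized by Eisenstein series) and a discrete piece. On the tempered piece, the leading exponents of $\pi$ are controlled by $\rho$, and combined with wavefrontness (via the proof of Proposition~\ref{propI} in \cite{KKSS2}) this yields a uniform bound $p_H(\pi)\le P_0$ depending only on the geometry of $Z$. For the discrete non-tempered piece, one invokes the spectral gap for irreducible lattices in products of simple non-compact Lie groups: the set $\hat G_{\Gamma,s}\setminus\{\1\}$ is bounded away from the trivial representation in the Fell topology, and combined with Weyl's law (which bounds the number of non-tempered $K$-spherical representations with Casimir eigenvalue below any given threshold) this yields a uniform bound on $p_H$ over this piece as well. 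Taking the maximum gives $p_H(\Gamma)<\infty$.

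The main obstacle is the uniform spectral gap for the non-tempered discrete spectrum. When some factor $G_i$ has Kazhdan's property (T) this is immediate from the standard projection argument using irreducibility of $\Gamma$; when all factors are of real rank one it requires finer input, typically invoking Howe--Moore mixing in combination with the irreducibility hypothesis on $\Gamma$ to rule out sequences of automorphic representations approaching the trivial representation.
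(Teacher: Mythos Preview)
Your overall strategy is close to the paper's: both reduce to a uniform integrability bound for matrix coefficients over the automorphic spherical spectrum, and both split according to whether property~(T) is available. The paper, however, works with ordinary matrix coefficients $g\mapsto\langle\pi(g)v,w\rangle$ on $G$ rather than directly on $Z_\eta$: it first shows there is a single $p<\infty$ with $\pi_{v,w}\in L^p(G)$ for every non-trivial $\pi\in\hat G_{\Gamma,s}$, citing Cowling \cite{Cowling} in the property~(T) case and Clozel \cite{C} otherwise, and then transfers this to $Z_\eta$ via the leading exponent $\Lambda_V$ and \cite{KKSS2}, Prop.~4.2 and Thm.~6.3. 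This detour through $G$ makes the argument cleaner and avoids the preliminary reductions you carry out.

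There are two genuine gaps in your proposal. First, the appeal to Weyl's law is misplaced. In a product $G=G_1\times\cdots\times G_m$ a representation $\pi=\pi_1\otimes\cdots\otimes\pi_m$ can have arbitrarily large Casimir eigenvalue while one factor $\pi_i$ sits deep in the complementary series of $G_i$; hence Weyl's law neither bounds the number of such representations nor bounds $p_H(\pi)$. Relatedly, ``bounded away from $\mathbf 1$ in the Fell topology of $\hat G$'' is weaker than what is needed: uniform $L^p(G)$-integrability requires that \emph{each} factor $\pi_i$ stay uniformly away from $\mathbf 1_{G_i}$, and isolation of $\mathbf 1_G$ in $\hat G$ does not imply this.

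Second, in the non-property-(T) case Howe--Moore mixing gives only pointwise decay for each fixed representation, with no uniform rate across $\hat G_{\Gamma,s}$. What is actually required is a uniform spectral gap in each simple factor, and this is a deep arithmetic input: the paper invokes Clozel's proof of the $\tau$-conjecture \cite{C} (implicitly combined with Margulis arithmeticity when $m\ge 2$). Howe--Moore together with irreducibility of $\Gamma$ does not rule out a sequence $\pi^{(n)}\in\hat G_{\Gamma,s}$ with $\pi_i^{(n)}\to\mathbf 1_{G_i}$ in one factor while the others wander; only the $\tau$-theorem does.
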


\begin{proof}  For a unitary representation $(\pi,\Hc)$ and vectors $v,w\in \Hc$ we form the 
matrix coefficient $\pi_{v,w}(g):= \la \pi(g)v, w\ra$. We first claim that there exists 
a $p<\infty$ (in general depending on $\Gamma$) such that for  all non-trivial $\pi \in \hat G_{\Gamma,s}$  one 
has $\pi_{v,w}\in L^p(G)$ for all $K$-finite vectors $v,w$. In case $G$ has property (T) 
this follows (independently of $\Gamma$) from \cite{Cowling}. 
The remaining cases contain at least one factor $G_i$ of 
$\SO_e(n,1)$ or $\operatorname{SU}(n,1)$
(up to covering) and have no compact factors by assumption. They are treated in \cite{C}. 

\par The claim can be interpreted geometrically via the leading exponent $\Lambda_V\in \af^*$  which is attached 
to the Harish-Chandra module of $\Hc$ (see \cite{KKSS2}, Section 6). The lemma now follows from 
Prop.~4.2 and Thm.~6.3 in \cite{KKSS2}  (see the proof of Thm.~7.6 in \cite{KKSS2} how these two facts combine to 
result in integrability).
\end{proof}

Let $1\leq p<\infty$. Let us say that a subset $\Lambda \subset \hat G_s $ 
is {\it $L^p$-bounded} provided
that $m_{v, \eta} \in L^p(Z_\eta)$ for all $\pi\in \Lambda$ and
$v\in \Hc_\pi^\infty$, $\eta\in (\Hc_\pi^{-\infty})^H$.
By definition we thus have that $\hat G_{\Gamma,s}$ is $L^p$-bounded
for $p>p_H(\Gamma)$.

In this section we work under the following:

\bigskip 
\par\noindent{\bf Hypothesis A:} {\it  
For every $1\leq p<\infty$ and every $L^p$-bounded subset $\Lambda\subset \hat G_s$
there exists a compact subset $\Omega\subset G$ and constants $c, C>0$ 
such that the following assertions hold
for all $\pi\in \Lambda$, $\eta\in (\Hc_\pi^{-\infty})^H$ and
$v\in \Hc_\pi^K$:
\begin{equation}\tag{A1} \label{hypo1}  \|m_{v, \eta}\|_{L^p(Z_\eta)}  \leq C  
\|m_{v, \eta}\|_\infty\, ,\end{equation}
\begin{equation}\tag{A2} \label{hypo2} \|m_{v, \eta}\|_\infty  \leq c \| m_{v, \eta}\|_{\infty, \Omega_\eta}
\,  \end{equation}
where $\Omega_\eta= \Omega H_\eta/ H_\eta$. 
Here $ \| \,\cdot\, \|_{\infty, \omega}$ denotes the supremum norm taken on the subset $\omega.$ }

\bigskip  In the sequel we are only interested in the following 
choice of subset $\Lambda\subset \hat G_s$, namely
\begin{equation}\label{defi Lambda}
\Lambda:=\{\pi \in \hat G_{\Gamma,s}\mid \Lambda_\pi(\nu)\neq 0\  \hbox{for some}
\ \nu \in {\mathcal V}_{\pi, \Gamma}\}\,.
\end{equation}

An immediate consequence of Hypothesis A is: 

\begin{lemma}\label{fiber} Assume that $p>p_H(\Gamma)$. Then there is a $C>0$ such that 
for all 
$\pi\in \hat G_{\Gamma,s} $,  $v\in\Hc_\pi^K$,
$\nu\in (\Hc_\pi^{-\infty})^\Gamma$ and $\eta:=\Lambda_\pi(\nu)\in (\Hc_\pi^{-\infty})^H$
one has 
$$\|\phi_{\pi}^{H}\|_{L^p(Z_\eta)} \leq C\|\phi_{\pi}\|_{\infty}$$
where  $\phi_\pi(g\Gamma):= \nu(\pi(g^{-1})v)$. 
\end{lemma}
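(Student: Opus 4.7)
The plan is to reduce the lemma to Hypothesis A~(A1), with the pointwise norm comparison essentially built in from the definition of $\Lambda_\pi$ as a vector-valued integral over the compact fibre $H/\Gamma_H$. The first step is to identify $\phi_\pi^H$ (a priori a function on $Z$) with the generalized matrix coefficient $m_{v,\eta}$ on $Z_\eta$. Writing $F\subset H$ for a compact fundamental domain of $\Gamma_H$ and using the definition of $\Lambda_\pi$ from (\ref{vint}), one interchanges the $\Hc_\pi^{-\infty}$-valued integral with the evaluation at $\pi(g^{-1})v$ to obtain
\[
\phi_\pi^H(gH)\;=\;\int_{H/\Gamma_H}\nu\bigl(\pi((gh)^{-1})v\bigr)\,d(h\Gamma_H)\;=\;\Lambda_\pi(\nu)\bigl(\pi(g^{-1})v\bigr)\;=\;m_{v,\eta}(gH_\eta),
\]
which is exactly the identity invoked in the proof of Lemma~\ref{existA}. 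Since the right-hand side is $H_\eta$-invariant, $\phi_\pi^H$ descends to $Z_\eta$.

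The second step is the pointwise bound. Since $\vol(H/\Gamma_H)=1$ by the normalization in Section~\ref{setup}, the triangle inequality applied inside the integral defining $\Lambda_\pi(\nu)$ yields
\[
|m_{v,\eta}(gH_\eta)|\;\le\;\int_{H/\Gamma_H}|\phi_\pi(gh\Gamma)|\,d(h\Gamma_H)\;\le\;\|\phi_\pi\|_\infty,
\]
and hence $\|m_{v,\eta}\|_\infty\le \|\phi_\pi\|_\infty$. This is the only role played by the compactness of $H/\Gamma_H$ (which was imposed just before Lemma~\ref{finite quotient}); here (A2) is not needed.

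The third step is to invoke Hypothesis~A(1). For this I need the set $\Lambda$ of (\ref{defi Lambda}) to be $L^p$-bounded in the sense preceding the statement of Hypothesis~A. But this is immediate from the definition (\ref{ic}) of $p_H(\Gamma)$: every $\pi\in\hat G_{\Gamma,s}$ satisfies $p_H(\pi)\le p_H(\Gamma)<p$, so $m_{v',\eta'}\in L^p(Z_{\eta'})$ for every $v'\in\Hc_{\pi}^{\infty}$ and every $\eta'\in(\Hc_\pi^{-\infty})^H$. Applying (A1) to $\Lambda$ then furnishes a constant $C>0$, independent of $\pi$, with
\[
\|m_{v,\eta}\|_{L^p(Z_\eta)}\;\le\;C\,\|m_{v,\eta}\|_\infty,
\]
and combining with the sup-norm bound of the previous paragraph gives $\|\phi_\pi^H\|_{L^p(Z_\eta)}\le C\,\|\phi_\pi\|_\infty$, as required.

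In summary, the lemma is essentially an assembly of three ingredients, none of which is an obstacle: the definition of $\Lambda_\pi$ as a Bochner-type integral, the compactness of the cycle to kill the $L^1$-norm of the averaging measure, and Hypothesis~A. The only subtle point one has to verify carefully is that the $L^p$-boundedness of the family $\Lambda$ is automatic from the definition of $p_H(\Gamma)$, so that the uniform constant $C$ from (A1) can be used simultaneously for all $\pi\in\hat G_{\Gamma,s}$.
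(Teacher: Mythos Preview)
Your proof is correct and follows essentially the same approach as the paper: the paper's two-line argument cites the contractivity bound (\ref{I1a}) for $\phi\mapsto\phi^H$ and then invokes (\ref{hypo1}), which is exactly your steps two and three, with the identification $\phi_\pi^H=m_{v,\eta}$ left implicit. Your write-up simply unpacks these ingredients in more detail.
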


\begin{proof} Recall from (\ref{I1a}), that
integration is a bounded operator from $L^{\infty}(Y)
\to L^{\infty}(Z)$. Hence the assertion follows from (\ref{hypo1}).
\end{proof}

Recall the Cartan-Killing form $\kappa$ on $\gf=\kf+\sf$ and choose a basis $X_1, \ldots, X_l$ of $\kf$ and
$X_1', \ldots, X_s'$ of $\sf$ such that $\kappa(X_i, X_j)=-\delta_{ij}$ and $\kappa(X_i', X_j')=\delta_{ij}$.
With that data we form the standard Casimir element
$$\cc:=-\sum_{j=1}^l\ X_j^2 + \sum_{j=1}^s (X_j')^2 \in \U(\gf)\, .$$
Set $\Delta_K:=\sum_{j=1}^l\ X_j^2 \in \U(\kf)$ and obtain
the commonly used Laplace
element
\begin{equation} \label{Laplace} \Delta=\cc+2\Delta_K \in \U(\gf)\,
\end{equation}
which acts on $Y=G/\Gamma$ from the left.

Let $d \in \N$. For $1\leq p\leq \infty$, it follows from
\cite{BK}, Section 3,  that Sobolev norms on $L^p(Y)^\infty\subset
C^{\infty}(Y)$ can be defined by
$$||f||_{p,2d}^{2}=\sum_{j=0}^{d}||\Delta^{j}f||_p^{2}\, .$$
Basic spectral theory allows one to define $\|\cdot\|_{p, d}$ 
more generally for any $d\geq 0$.

\par Let us define
$$s:=\dim \sf = \dim G/K =\dim \Gamma\bs G/K $$
and
$$r:=\dim \af =\rank_\R(G/K)\, ,$$
where $\af\subset\sf$ is maximal abelian.

\par We denote by $C_b(Y)$ the space of continuous bounded functions on $Y$ and by 
$C_b(Y)_{\rm van}$ the subspace with vanishing integral.

\begin{proposition}\label{mpro} Assume that
\begin{enumerate}
\item $Z$ is a wavefront real spherical 
space, 
\item $G=G_1 \times\ldots\times G_m$ with all $\gf_i$ simple and non-compact.
\item $\Gamma<G$ is irreducible and 
$Y_H$ is compact, 
\item Hypothesis A is valid. 
\end{enumerate}
Let $p>p_H(\Gamma)$. Then the  map
$${\rm Av_{H}}:
C^{\infty}_b(Y)^K_{\rm van} \to L^{p}(Z)^K; {\rm Av_{H}}(\phi)=\phi^{H}$$
is continuous. More precisely, for all
\begin{enumerate}
\item $k> s+1$ if $Y$ is compact.
\item $k>{r+1\over 2} s +1$ if $Y$ is non-compact and $\Gamma$ is arithmetic
\end{enumerate}
there exists a constant
$C=C(p, k)>0$ such that
$$\|\phi^{H}\|_{L^p(Z)} \leq
C \|\phi\|_{\infty,k} \qquad (\phi \in
C^{\infty}_b(Y)_{\rm van}^K)\, $$
\end{proposition}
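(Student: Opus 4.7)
The plan is to decompose $\phi$ spectrally via the Plancherel theorem on $L^2(Y)^K$, bound each spectral piece of $\phi^H$ in $L^p(Z)$ using Hypothesis~A through Lemma \ref{fiber}, and then reassemble with Sobolev embedding on $Y$ together with Weyl's law (or its analogue for the continuous spectrum). First, since $Y$ has finite volume and $\phi\in C_b(Y)$ we have $\phi\in L^2(Y)^K_{\rm van}$, and Plancherel gives
$$\phi = \int^{\oplus}_{\hat G_{\Gamma,s}\setminus\{\1\}} a_{v_\pi,\phi^\wedge(\pi)}\, d\mu(\pi),$$
the vanishing condition removing the trivial representation. Because $H/\Gamma_H$ is compact the averaging operator $\Lambda_\pi$ of (\ref{vint}) is defined, and applying it spectrally yields
$$\phi^H = \int^{\oplus} m_{v_\pi,\Lambda_\pi(\phi^\wedge(\pi))}\, d\mu(\pi),$$
with only $\pi\in\Lambda$ of (\ref{defi Lambda}) contributing.

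For each such $\pi$ and $\eta:=\Lambda_\pi(\phi^\wedge(\pi))$, Lemma \ref{finite quotient} gives $H_\eta/H$ compact. Since $H_\eta$ must arise from a weakly basic factorization of $Z$ there are only finitely many possible $H_\eta$, and the fiber volumes $\vol(H_\eta/H)$ are uniformly bounded. Writing $m_{v_\pi,\eta}$ as the pullback from $Z_\eta$ to $Z$ and using Fubini along the fiber gives
$$\|m_{v_\pi,\eta}\|_{L^p(Z)} \leq C_0\, \|m_{v_\pi,\eta}\|_{L^p(Z_\eta)};$$
combined with Lemma \ref{fiber} this produces the key spectral bound $\|\phi_\pi^H\|_{L^p(Z)} \leq C_1 \|\phi_\pi\|_\infty$.

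For compact $Y$ the Plancherel measure is counting on a discrete set, so the triangle inequality gives $\|\phi^H\|_{L^p(Z)} \leq C_1 \sum_\pi \|\phi_\pi\|_\infty$. Each $\phi_\pi$ is a Casimir eigenfunction; writing $\mu_\pi$ for its $(1+\Delta)$-eigenvalue, standard heat-kernel Sobolev estimates on the $s$-dimensional manifold $Y$ give $\|\phi_\pi\|_\infty \leq C(1+\mu_\pi)^{s/4}\|\phi_\pi\|_2$. Cauchy--Schwarz followed by Weyl's law then yields
$$\sum_\pi \|\phi_\pi\|_\infty \leq C\Big(\sum_\pi (1+\mu_\pi)^{s/2-k}\Big)^{1/2}\|\phi\|_{2,k},$$
with the first factor finite for $k>s+1$ and the second bounded by $\vol(Y)^{1/2}\|\phi\|_{\infty,k}$, closing the compact case.

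The hard part will be the non-compact arithmetic case, where the Plancherel decomposition contains continuous Eisenstein contributions parameterized by $\lambda\in i\af_P^*$ with $\dim_\R i\af_P^*\leq r=\rank_\R(G/K)$. The same outline applies, but the summation becomes a combination of a discrete sum over the cuspidal spectrum and an integral over $\lambda$, and one must (i) justify convergence of the spectral integral of the $L^p(Z)$-bounds uniformly in $\lambda$, (ii) absorb the polynomial growth of the Plancherel density $|c(\lambda)|^{-2}$, and (iii) invoke M\"uller-type estimates for the cuspidal counting function together with uniform Sobolev/pointwise bounds for Eisenstein series along $i\af_P^*$. The $r$ extra real spectral parameters inflate the critical exponent from $k>s+1$ to $k>\tfrac{r+1}{2}s+1$, matching the statement. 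This continuous-spectrum book-keeping is the principal obstacle; the compact case rests only on Hypothesis~A and classical Weyl's law.
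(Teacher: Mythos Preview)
Your compact-$Y$ argument is essentially the paper's: spectral decomposition, the bound $\|\phi_\pi^H\|_{L^p}\le C\|\phi_\pi\|_\infty$ from Hypothesis~(\ref{hypo1}) via Lemma~\ref{fiber}, then Cauchy--Schwarz against Weyl's law together with a Sobolev shift of order $s/2$. The bookkeeping is arranged slightly differently (you apply the Sobolev bound on each eigenfunction first, the paper first inserts $(1+|\pi|)^{\pm k/2}$), but the content and the resulting threshold $k>s+1$ coincide.

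The non-compact case, however, has a genuine gap, and it is exactly the gap that Hypothesis~(\ref{hypo2}) is designed to close. You never invoke~(\ref{hypo2}). Your scheme requires a uniform inequality $\|\phi_\pi\|_\infty\le C(1+\mu_\pi)^{s/4}\|\phi_\pi\|_2$ on $Y$, and later $\|\phi\|_{2,k}\le \vol(Y)^{1/2}\|\phi\|_{\infty,k}$; both fail when $Y$ is non-compact, and for the Eisenstein pieces the individual $\phi_\pi$ are not even in $L^2(Y)$, so the heat-kernel bound is not available. The paper's remedy is to use~(\ref{hypo2}) to replace the global supremum of $m_{v_\pi,\eta}$ by its supremum over a \emph{fixed} compact set $\Omega\subset G$, then apply the Sobolev lemma on $K\backslash G$ locally over $\Omega$ (shift $k_1>s/2$), and only afterwards pass back to an $L^2$-Sobolev norm of $\phi$. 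This localisation is what makes the estimate uniform over the entire spectrum without any appeal to pointwise bounds for automorphic forms.

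You also invert the relative difficulty of the two spectral pieces. In the paper the continuous spectrum is the \emph{easier} part: multiplicities are bounded (Harish-Chandra), the Plancherel measure $\mu_c$ is Euclidean in at most $r$ real parameters, and $\int(1+|\pi|)^{-k}\,d\mu_c(\pi)<\infty$ already for $k>r/2$; no $c$-function growth and no M\"uller-type input are used. The large exponent $\tfrac{r+1}{2}s+1$ is forced by the \emph{discrete} spectrum through Ji's upper Weyl bound $\sum_{|\pi|\le R} m(\pi)\le c_Y R^{rs/2}$, which requires $k>rs/2+1$ for (\ref{weight}); adding the Sobolev shift $s/2$ then gives the stated threshold.
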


\begin{proof}  For all $\pi \in \hat G$
the operator $d\pi(\cc)$ acts as a scalar $\lambda_\pi$ and we set 
$$|\pi|:= |\lambda_\pi|\geq 0\, .$$
Let $\phi\in C_b^\infty(Y)^K_{\rm van}$ and write
$\phi=\phi_d +\phi_c$ for its decomposition in discrete and continuous
Plancherel parts.
We assume first that $\phi=\phi_d$.

\par In case $Y$ is compact we have Weyl's law:
There is a constant $c_Y>0$ such that
$$ \sum_{|\pi|\leq R} m(\pi)
 \sim c_Y R^{s/2} \qquad (R\to \infty)\, .$$
Here $m(\pi)= \dim \V_{\pi, \Gamma}$.
We conclude that
\begin{equation}\label{weight}
\sum_{\pi} m(\pi) (1+|\pi|)^{-k} < \infty
\end{equation}
for all $k> s/2+ 1$.
In case $Y$ is non-compact, we let $\hat G_{\mu,d}$ be the
the discrete support of the Plancherel measure. Then
assuming $\Gamma$ is arithmetic, the upper bound in \cite{Ji}
reads:
$$ \sum_{\pi \in \hat G_{\mu,d} \atop |\pi|\leq R} m(\pi)
 \leq  c_Y R^{rs/2} \qquad (R>0)\, .$$
For $k> rs/2+ 1$ we obtain (\ref{weight}) as before.

\par Let $p>p_H(\Gamma)$.
As $\phi$ is in the discrete spectrum we  decompose it
as $\phi=\sum_\pi \phi_{\pi}$
and obtain by Lemmas \ref{finite quotient} and \ref{fiber}
$$\|\phi^{H}\|_{p} \leq \sum_\pi \|\phi_{\pi}^{H}\|_{p}
\leq C \sum_\pi \|\phi_{\pi}\|_{\infty}\, .$$
The last sum we estimate as follows:
\begin{eqnarray*}
\sum_\pi \|\phi_{\pi}\|_{\infty}&=&\sum_\pi
(1+|\pi|)^{-k/2}(1+|\pi|)^{k/2} \|\phi_{\pi}\|_{\infty}\\
&\leq& C \sum_\pi  (1+|\pi|)^{-k/2}\|\phi_{\pi}\|_{\infty,k}\end{eqnarray*}
with $C>0$ a constant depending only on $k$ (we allow universal positive constants to change
from line to line).
Applying the Cauchy-Schwartz inequality combined with (\ref{weight})
we obtain
$$\|\phi^{H}\|_{p} \leq C \Big(\sum_{\pi} \|\phi_{\pi}\|_{\infty,k}^{2}\Big)^{\frac{1}{2}}\, $$
with $C>0$. With Hypothesis (\ref{hypo2}) 
we get the further improvement: 
$$\|\phi^{H}\|_{p} \leq C \Big(\sum_{\pi} \|\phi_{\pi}\|_{\Omega, \infty,k}^{2}\Big)^{\frac{1}{2}}\, $$
where the Sobolev norm is taken only over the compact set  $\Omega$.

To finish the proof we apply the Sobolev lemma on $K\backslash G$.
Here Sobolev norms are defined by the central operator $\cc$,
whose action agrees with the left action of $\Delta$.
It follows that $\|f\|_{\infty, \Omega} \leq C \|f\|_{2,k_1, \Omega}$ with
$k_1>\frac{s}{2}$ for $K$-invariant functions $f$ on $G$.
This gives $$\|\phi^{H}\|_{p}
\leq C(\sum_{\pi}||\phi_{\pi}||_{\Omega, 2,k+k_1}^{2})^{\frac{1}{2}} =
C||\phi||_{\Omega, 2,k+k_1} \leq C ||\phi||_{\infty,k+k_1}$$
which proves the proposition for the discrete spectrum.

\par If $\phi=\phi_c$ belongs to the continuous spectrum,
where multiplicities are
bounded (see \cite{HC}), the proof is simpler.
Let $\mu_c$ be the restriction of the Plancherel measure
to the continuous spectrum. As this is
just Euclidean measure on $r$-dimensional space we have
\begin{equation}\label{mu_c}
\int_{\hat G_s} (1+|\pi|)^{-k} \ d\mu_c(\pi)<\infty
\end{equation}
if $k>r/2$.
We assume for simplicity in what follows that $m(\pi)=1$
for almost all $\pi\in\supp\mu_c$.
As $\sup_{\pi \in \supp{\mu}_c} m(\pi) <\infty$
the proof is easily adapted to the general case.

Let $$\phi=\int_{\hat G_s} \phi_\pi \ d\mu_c(\pi).$$
As $\|\phi^H\|_\infty \leq \|\phi\|_\infty$ we conclude
with Lemma \ref{fiber}, (\ref{mu_c}) and Fubini's theorem
that
$$\phi^H =\int_{\hat G_s} \phi_\pi^H \ d\mu_c(\pi)\, $$
and, by the similar chain of inequalities as in the
discrete case
$$
\|\phi^H\|_p \leq C \|\phi\|_{\infty,k+k_1}
$$
with $k>\frac r2$ and
$k_1>\frac s2$.
This concludes the proof.
\end{proof}

\section{Error term estimates}

Recall $\1_R$,  the characteristic function of $B_R$.   
The first error term for the lattice counting problem 
can be expressed by
$$\err(R, \Gamma):= \sup_{\phi\in C_b(Y)\atop \|\phi\|_\infty\leq 1}
|\left\la {\1_R^\Gamma\over |B_R|} - \1_Y, \phi \right\ra| \qquad (R>0),$$
and our goal is to give an upper bound for $\err(R, \Gamma)$
as a function of $R$.

According to the decomposition $C_b(Y)= C_b(Y)_{\rm van} \oplus \C \1_Y$
we decompose functions as $\phi=\phi_o +\phi_1$ and obtain
\begin{equation*}
\err(R, \Gamma) =
\sup_{\phi\in C_b(Y)\atop \|\phi\|_\infty\leq 1} {|\la \1_R^\Gamma, \phi_o\ra|\over|B_R|}
= \sup_{\phi\in C_b(Y)\atop \|\phi\|_\infty\leq 1} {|\la \1_R,
\phi_o^H\ra|\over |B_R|}\, .
\end{equation*}
Further, from
$\|\phi_o\|_\infty\leq 2 \|\phi\|_\infty$ we obtain that
$\err(R,\Gamma)\leq 2 \err_1(R, \Gamma)$ with
$$\err_1(R, \Gamma):=
\sup_{\phi\in C_b(Y)_{\rm van}\atop \|\phi\|_\infty\leq 1} {|\la \1_R^\Gamma, \phi\ra|\over|B_R|}
=  \sup_{\phi\in C_b(Y)_{\rm van}\atop \|\phi\|_\infty\leq 1} {|\la \1_R, \phi^H\ra|\over |B_R|}\, .$$

\subsection{Smooth versus non-smooth counting}

Like in the classical Gauss circle problem one obtains much better
estimates for the remainder term if one uses a smooth cutoff.
Let $\alpha\in C_c^\infty(G)$ be a non-negative test function with normalized integral.
Set $\1_{R, \alpha}:= \alpha * \1_R$ and define
$$\err_{\alpha}(R, \Gamma):=  \sup_{\phi\in C_b(Y)^{K}_o\atop \|\phi\|_\infty\leq 1}
{|\la \1_{R,\alpha}^\Gamma, \phi\ra| \over |B_R|}=  \sup_{\phi\in C_b(Y)^{K}_o\atop \|\phi\|_\infty\leq 1}
{|\la \1_{R,\alpha}, \phi^H\ra| \over |B_R|}\, .$$

\begin{lemma}\label{smooth counting lemma}
Let $k>s +1$ if $Y$ is compact and
$k>{r+1\over 2} s +1$ otherwise. Let $p> p_H(\Gamma)$ and $q$ be such that ${1\over p} + {1\over q} =1$.
Then there exists $C>0$ such that
\begin{equation}\label{alpha error bound}
\err_{\alpha}(R,\Gamma) \leq  C \|\alpha\|_{1, k} |B_R|^{-{1\over p}}
\end{equation}
for all $R\geq 1$ and all $\alpha\in C_c^\infty(G)$.
\end{lemma}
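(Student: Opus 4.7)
The plan is to use duality to shift convolution with $\alpha$ onto $\phi$, apply H\"older's inequality, and then invoke Proposition \ref{mpro}.

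First I would establish the identity
$$\la \1_{R,\alpha}, \phi^H \ra_Z = \la \1_R, (\check\alpha * \phi)^H\ra_Z,$$
where $\check\alpha(g) := \overline{\alpha(g^{-1})}$ and the right-hand convolution is taken on $Y$. The verification is a routine Fubini computation using the unimodularity of $G$, together with the fact that the $H$-averaging map $\psi \mapsto \psi^H$ commutes with left convolution. Since $\phi \in C_b(Y)^K$, the function $\phi^H$ is left $K$-invariant on $Z$, so the left-hand pairing is unchanged when $\alpha$ is replaced by its right $K$-average; this averaging neither affects $\|\alpha\|_{1,k}$ (because the Casimir commutes with $K$) nor the normalization. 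I may therefore assume from the outset that $\alpha$ is right $K$-invariant, which makes $\check\alpha$ left $K$-invariant and hence $\check\alpha * \phi$ left $K$-invariant on $Y$. Together with $\int_Y \check\alpha * \phi\, d\mu_Y = \bigl(\int_G \check\alpha\,dg\bigr)\int_Y \phi\,d\mu_Y = 0$, this places $\check\alpha * \phi$ in $C_b^\infty(Y)^K_{\rm van}$, so the hypotheses of Proposition \ref{mpro} are satisfied.

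Next, H\"older's inequality together with $\|\1_R\|_{L^q(Z)} = |B_R|^{1/q}$ gives
$$|\la \1_R, (\check\alpha * \phi)^H\ra_Z| \leq |B_R|^{1/q}\,\|(\check\alpha * \phi)^H\|_{L^p(Z)},$$
and Proposition \ref{mpro}, applied with $p > p_H(\Gamma)$ and $k$ as in the statement, yields
$$\|(\check\alpha * \phi)^H\|_{L^p(Z)} \leq C\,\|\check\alpha * \phi\|_{\infty, k}.$$
Since $\Delta$ is bi-invariant, convolution commutes with $\Delta$ in the $\alpha$-slot, so a standard Young-type estimate gives
$$\|\check\alpha * \phi\|_{\infty, k} \leq C\,\|\check\alpha\|_{1,k}\,\|\phi\|_\infty = C\,\|\alpha\|_{1,k}\,\|\phi\|_\infty,$$
where the last equality uses that $\alpha \mapsto \check\alpha$ is an $L^1$-isometry commuting with $\Delta$ (by unimodularity and bi-invariance).

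Assembling the three bounds and dividing by $|B_R|$ produces
$$\err_\alpha(R,\Gamma) \leq C\,\|\alpha\|_{1,k}\,|B_R|^{1/q - 1} = C\,\|\alpha\|_{1,k}\,|B_R|^{-1/p},$$
using $1/p + 1/q = 1$, which is exactly (\ref{alpha error bound}). I do not expect any of the individual steps to cause difficulty once Proposition \ref{mpro} is in hand; the only mildly subtle point is the $K$-averaging reduction needed to legitimately apply that proposition.
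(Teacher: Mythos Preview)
Your argument is correct and uses the same three ingredients as the paper's proof: H\"older's inequality, Proposition~\ref{mpro}, and a Young-type convolution estimate. The only difference is organizational. The paper inserts $(-\1+\Delta)^{k/2}(-\1+\Delta)^{-k/2}$ on $\phi^H$, applies Proposition~\ref{mpro} to $\psi=(-\1+\Delta)^{-k/2}\phi$, and then moves the operator $(-\1+\Delta)^{k/2}$ onto $\1_{R,\alpha}$ by self-adjointness before applying H\"older; you instead move the convolution by $\alpha$ onto $\phi$ and apply Proposition~\ref{mpro} directly to $\check\alpha*\phi$. Your route has the mild advantage of avoiding the fractional operator $(-\1+\Delta)^{-k/2}$, at the cost of the $K$-averaging bookkeeping.

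One small point to tighten: the identity $\|\check\alpha\|_{1,k}=\|\alpha\|_{1,k}$ is not immediate because $\Delta=\cc+2\Delta_K$ and $\Delta_K$ is not central, so inversion on $G$ does not literally commute with left $\Delta$. Since both $\1_R$ and $\phi^H$ are left $K$-invariant, you may also average $\alpha$ on the \emph{left} (in addition to the right) without changing the pairing and without increasing $\|\alpha\|_{1,k}$; once $\alpha$ is bi-$K$-invariant, $\Delta_K$ annihilates both $\alpha$ and $\check\alpha$, only the central $\cc$ survives, and the claimed equality of Sobolev norms follows.
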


\begin{proof} First note that
$$\la \1_{R, \alpha}, \phi^H\ra =\la \1_{R, \alpha}, (-\1+\Delta)^{k/2}(-\1+\Delta)^{-k/2}\phi^H\ra\, .$$
With $\psi=(-\1+\Delta)^{-k/2}\phi$ we have
$\|\psi\|_{\infty,k} \leq C\|\phi\|_\infty$
for some $C>0$. We thus
obtain
\begin{eqnarray*} \err_{\alpha}(R, \Gamma)& \leq C& \sup_{\psi\in C_b(Y)^{K}_o\atop \|\psi\|_{\infty,k} \leq 1}
{|\la \1_{R,\alpha}, (-\1+\Delta)^{k/2}\psi^{H} \ra| \over |B_R|}\\
& \leq & {C\over |B_R|} \sup_{\psi\in C_b(Y)^{K}_o\atop \|\psi\|_{\infty,k} \leq 1}
|\la \1_{R,\alpha}, (-\1+\Delta)^{k/2}\psi^{H} \ra| \end{eqnarray*}

Moving $(-\1+\Delta)^{k/2}$ to the other side we  get with H\"older's inequality and
Proposition \ref{mpro} that
\begin{equation*} \err_{\alpha}(R, \Gamma) \leq
{C\over |B_R|}   \|(-\1+\Delta)^{k/2}\alpha * \1_{R}||_{q}\, . 
\end{equation*}

Finally,
$$\|(-\1+\Delta)^{k/2}\alpha * \1_{R}\|_{q}\leq C \|\alpha\|_{1, k} \|\1_R\|_q$$
and with $\|\1_R\|_q= |B_R|^{1\over q}$, 
the lemma follows.
\end{proof}

\begin{rmk} \label{error relate}In the literature results are 
sometimes stated not with respect to $\err(R, \Gamma)$ but 
the pointwise error term $\err_{pt} (R,\Gamma) = |\1_R^\Gamma (\1) - |B_R||$. 
Likewise we define $\err_{pt,\alpha} (R,\Gamma)$.
Let $B_Y$ be a compact neighborhood of $\1 \Gamma\in Y$ and note 
that 
$$\err_{pt,\alpha}(R, \Gamma)\leq |B_R| \sup_{\phi\in L^1(B_Y)\atop \|\phi\|_1\leq 1}
|\la {\1_{R,\alpha}^\Gamma\over |B_R|} - \1_Y, \phi \ra| \qquad (R>0).$$
The Sobolev estimate  $\|\phi\|_\infty\leq C \|\phi\|_{1, k}$, for 
$K$-invariant functions $\phi$ on $B_Y$ and with
$k=\dim Y/K$ the Sobolev shift, 
then relates these error terms:
$$\err_{pt,\alpha}(R, \Gamma)\leq  |B_R| \sup_{\phi\in C^\infty_b(Y)\atop \|\phi\|_{\infty, -k}\leq 1}
|\la {\1_R^\Gamma\over |B_R|} - \1_Y, \phi \ra| \, .$$
We then obtain
$$\err_{pt,\alpha} (R,\Gamma)\leq C |B_R|^{1-{1\over p}} \qquad (R>0) \, $$
in view of (\ref{alpha error bound}).
\end{rmk}

We return to the error bound in Lemma \ref{smooth counting lemma} and would like to
compare $\err_1(R, \Gamma)$ with $\err_\alpha(R, \Gamma)$. For that 
we note (by the triangle inequality) that
$$|\err_1(R, \Gamma) -\err_\alpha(R, \Gamma)|\leq
\sup_{\phi\in C_b(Y)^{K}_o\atop \|\phi\|_\infty\leq 1} { |\la \1_{R, \alpha}^\Gamma-\1_R^\Gamma, \phi\ra|\over |B_R|}\, .$$
Suppose that $\supp \alpha \subset B_\e^G$ for some $\e>0$. 
Then Lemma \ref{distortions} implies that $\1_{R,\alpha}$ is
supported in $B_{R+\epsilon}$, and hence
\begin{eqnarray*} |\la \1_{R, \alpha}^\Gamma-\1_R^\Gamma, \phi\ra| &\leq& \|\1_{R, \alpha}^\Gamma-\1_R^\Gamma\|_1\\
&\leq& \|\1_{R, \alpha}-\1_R\|_1\\
&\leq& |B_{R+ \e}|^{1\over 2}  \|\1_{R, \alpha}-\1_R\|_2 \\
&\leq& |B_{R+ \e}|^{1\over 2}  |B_{R+ \e} \bs B_R|^{1\over 2} \, . \end{eqnarray*}
With Lemma \ref{volumes}   we get 
$$|B_{R+\e} \bs B_R|\leq  C\e  |B_R|   \qquad (R\geq 1, \e<1)\, .$$
Thus we obtain that
$$|\err_1(R, \Gamma) -\err_\alpha(R, \Gamma)|\leq C \e^{\frac12} \, .$$
Combining this with the estimate in Lemma
\ref{smooth counting lemma} we arrive at
the existence of $C>0$ such that
$$\err_1(R, \Gamma)\leq  C (\e^{-k} |B_R|^{-{1\over p}}+  \e^{\frac12}) \, $$
for all $R\geq 1$ and all $0<\e<1$.
The minimum of the function $\e\mapsto \e^{-k} c +\e^{1/2}$ is
attained at $\e= (2kc)^{2\over 2k+1}$
and thus we get:

\begin{theorem} \label{thm=error} Under the assumptions of Proposition \ref{mpro}
the first error term $\err(R,\Gamma)$ for the lattice counting
problem on $Z=G/H$ can be estimated as follows: for all $p>p_H(\Gamma)$ and
$k>s +1$ for $Y$ compact, resp.  $k>{r+1\over 2} s +1$ otherwise,
there exists a constant $C=C(p, k)>0$ such that
$$\err(R,\Gamma)\leq C |B_R|^{-{1\over (2k+1)p}}\, $$
for all $R\geq 1$.
\end{theorem}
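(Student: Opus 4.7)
The plan is to reduce the error term $\err(R,\Gamma)$ to the smooth counting error $\err_{\alpha}(R,\Gamma)$, where $\alpha$ is a small smooth approximate identity on $G$, and then optimize the width of the approximation. Since $\err(R,\Gamma)\leq 2\err_1(R,\Gamma)$, it suffices to control $\err_1(R,\Gamma)$, so I will work with this variant throughout.

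The first step is to pick $\alpha\in C_c^\infty(G)$ non-negative with $\int\alpha=1$ and $\supp\alpha\subset B^G_\e$, arranged so that $\|\alpha\|_{1,k}\lesssim \e^{-k}$ (obtained by rescaling a fixed bump function). Lemma~\ref{smooth counting lemma} then gives
\[
\err_{\alpha}(R,\Gamma)\leq C\e^{-k}|B_R|^{-1/p}.
\]
Next I would compare $\err_1$ with $\err_\alpha$ by the triangle inequality: the difference is dominated by $|B_R|^{-1}\|\1_{R,\alpha}^\Gamma-\1_R^\Gamma\|_1$ (against $\phi$ with $\|\phi\|_\infty\le 1$), which by contractivity of the $\Gamma$-average and by Cauchy--Schwarz is at most $|B_R|^{-1}|B_{R+\e}|^{1/2}|B_{R+\e}\setminus B_R|^{1/2}$. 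Here I use the containment $\supp\1_{R,\alpha}\subset B_{R+\e}$ coming from Lemma~\ref{distortions}.

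To estimate this, apply Lemma~\ref{volumes}: for $\e<1$ and $R\ge 1$ one has $|B_{R+\e}|\asymp|B_R|$ and $|B_{R+\e}\setminus B_R|\lesssim \e\,|B_R|$ (the latter by differentiating the exponential bound $|B_{R+r}|\le e^{cr}|B_R|$ at $r=0$, or equivalently by writing the annulus volume as $\int_R^{R+\e}\frac{d}{dt}|B_t|\,dt$ and invoking the same $A.3$-style bound). This gives $|\err_1(R,\Gamma)-\err_\alpha(R,\Gamma)|\lesssim \e^{1/2}$, so combining with the previous step yields
\[
\err_1(R,\Gamma)\leq C\bigl(\e^{-k}|B_R|^{-1/p}+\e^{1/2}\bigr)
\]
for all $R\ge 1$ and $0<\e<1$.

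The final step is to optimize the free parameter $\e$. Balancing the two terms leads to $\e^{k+1/2}\sim |B_R|^{-1/p}$, i.e.\ $\e\sim |B_R|^{-2/((2k+1)p)}$. Substituting back gives both terms of order $|B_R|^{-1/((2k+1)p)}$, which is exactly the claimed bound (up to the factor $2$ inherited from $\err\le 2\err_1$). The main obstacle is conceptually minor but worth flagging: one must verify that the $\e<1$ constraint is harmless (it is, since for $R\ge 1$ the optimal $\e$ tends to $0$), and that the implicit constant in the volume-annulus bound $|B_{R+\e}\setminus B_R|\lesssim \e|B_R|$ is indeed uniform in $R\ge 1$; this uniformity is exactly what Lemma~\ref{volumes} (via \cite[Lemma A.3]{EMS}) supplies.
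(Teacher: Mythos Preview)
Your proposal is correct and follows essentially the same approach as the paper: reduce to $\err_1$, invoke Lemma~\ref{smooth counting lemma} for the smoothed error, compare $\err_1$ and $\err_\alpha$ via the $L^1$-bound $|B_{R+\e}|^{1/2}|B_{R+\e}\setminus B_R|^{1/2}$ using Lemma~\ref{distortions} and the $\Gamma$-contractivity (\ref{I2a}), then apply Lemma~\ref{volumes} for the annulus estimate and optimize in $\e$. Your added remarks on the scaling $\|\alpha\|_{1,k}\lesssim \e^{-k}$ and the harmlessness of the constraint $\e<1$ are details the paper leaves implicit.
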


\begin{rmk} The point where we lose essential information is in the estimate
(\ref{weight})
where we used Weyl's law. In the moment pointwise multiplicity
bounds are available
the estimate would improve.
To compare the results with Selberg on the hyperbolic disc,
let us assume that
$p_H(\Gamma)=2$. Then with $r=1$ and $s=2$
our bound is $\err(R,\Gamma)\leq C_\e  |B_R|^{-{1\over 14}+\e}$
while Selberg showed
$\err(R,\Gamma)\leq C_\e  |B_R|^{-{1\over 3}+\e}$.
\end{rmk}

\section{Triple spaces}\label{UI for triple}

In this section we verify our Hypothesis A for triple space $Z= G/H$ where 
$G=G'\times G' \times G'$,  $H = \diag(G')$ and 
$G'=\SO_e(1, n)$ for some $n\geq 2$. Observe that $\SO_e(1,2) \cong \PSl(2,\R)$. 
We take $K':= \SO(n, \R)<G'$ as a maximal compact subgroup and set 
$K:= K' \times K ' \times K'$. Further we set $\sf:= \sf' \times \sf' \times \sf'$.
A maximal abelian subspace $\af \subset \sf$ is then of the form 

$$ \af = \af_1' \times \af_2' \times \af_3'$$
with $\af_i'\subset \sf'$ one dimensional subspaces. 
We recall the following result from \cite{DKS}.

\begin{prop} For the triple space the following assertion hold true: 
\begin{enumerate} 
\item $G=KAH$ if and only if $\dim (\af_1' +\af_2' +\af_3')=2$. 
\item Suppose that all $\af_i'$ are pairwise distinct. Then one has $PH$ is open 
for all minimal parabolics $P$ with Langlands-decomposition 
$P=M_P A_P N_P$ and $A_P=A$. 
\end{enumerate}
\end{prop}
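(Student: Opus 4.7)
The plan is to reduce both parts to linear-algebra computations in $\gf'$, using the factorization $G=G'\times G'\times G'$, $\hf=\diag(\gf')$, and the low-rank structure of $G'=\SO_e(1,n)$.

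For part (2), openness of $PH$ is equivalent to $\pf+\hf=\gf$. Decomposing $P=P_1\times P_2\times P_3$ with each $P_i$ a minimal parabolic of $G'$ containing $A_i'$, an elementary manipulation shows this is equivalent to the surjectivity of
\[
\Phi\colon\gf'\longrightarrow\bigoplus_{i=1}^{3}\gf'/\pf_i,\qquad Y\longmapsto(Y+\pf_i)_{i=1,2,3},
\]
whose kernel is $\pf_1\cap\pf_2\cap\pf_3$. Since $\dim(\gf'/\pf_i)=n-1$, this reduces to the dimension identity $\dim(\pf_1\cap\pf_2\cap\pf_3)=\tfrac{(n-2)(n-3)}{2}$. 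I would verify this via the correspondence between minimal parabolics of $G'=\SO_e(1,n)$ and points of $\partial\Hb^n=S^{n-1}$: for each of the $2^3$ chamber choices, pairwise distinct $\af_i'$ give three pairwise distinct boundary points (because distinct geodesics through the origin have disjoint pairs of endpoints), these three points span a unique totally geodesic $\Hb^2\subset\Hb^n$, and the common stabilizer decomposes as $\{\mathrm{id}\}\times\SO(n-2)$ (trivial stabilizer in $\SO_e(1,2)\cong\PSl(2,\R)$ by $3$-transitivity on $\partial\Hb^2$, times rotations in the normal $(n-2)$-plane), of exactly the predicted dimension.

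For part (1), the starting point is the identification $Z=G/H\cong G'\times G'$ via $[g_1,g_2,g_3]H\mapsto(g_2g_1^{-1},g_3g_1^{-1})$, under which $z_0=(e,e)$ and $A$ acts by $(a_1,a_2,a_3)\cdot(x,y)=(a_2xa_1^{-1},a_3ya_1^{-1})$. Writing $h=k_1a_1\in K'A_1'$, the equation $G=KAH$ becomes: for every $(x,y)\in G'\times G'$ there exists such an $h$ with $xh\in K'A_2'$ and $yh\in K'A_3'$. The key elementary input is
\[
K'A_i'A_j' = G'\iff \af_i'+\af_j'=\sf',
\]
proved by computing the differential of $K'\times A_i'\times A_j'\to G'$ at the identity (which is $\kf'+\af_i'+\af_j'$) and combining with the Cartan decomposition of $G'$. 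When $\dim(\af_1'+\af_2'+\af_3')=2$, all $\af_i'$ lie in a common $2$-plane $V\subset\sf'$ and all $A_i'$ lie in the subgroup $G^V\subset G'$ with Lie algebra $V\oplus[V,V]\cong\mathfrak{so}(1,2)$; one first solves the decomposition problem inside $(G^V)^3/\diag(G^V)$ (the $n=2$ case, handled by a direct two-by-two matrix computation) and then uses transitivity of $\Ad(K')$ on $2$-planes of $\sf'$ through the origin to transport the solution to all of $G'\times G'$. Conversely, if $\dim(\af_1'+\af_2'+\af_3')=3$ then every $K'A_i'A_j'$ is a proper submanifold of $G'$, and a suitable explicit $(x,y)$ obstructs the decomposition. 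The main technical hurdle throughout is upgrading local/infinitesimal statements (the multiplication map $K\times A\times H\to G$ fails to be submersive at the identity but becomes submersive at other preimages of $z_0$) to global set-theoretic equalities; this is carried out in detail in \cite{DKS}.
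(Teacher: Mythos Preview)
The paper does not prove this proposition; it is quoted from \cite{DKS} without argument, so there is no in-paper proof to compare against. Assessing your sketch on its own merits:

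Part (2) is sound. The reduction of $\pf+\hf=\gf$ to surjectivity of $\Phi$ is correct, and the stabilizer computation via three distinct boundary points of $\Hb^n$ works. One point you should make explicit: three distinct null lines in $\R^{1,n}$ automatically span a nondegenerate subspace of signature $(1,2)$, because any two of them span a $(1,1)$-plane (signature $(1,n)$ admits no isotropic $2$-planes) and such a plane contains exactly two null lines, forcing the third outside. This is what justifies the $\{e\}\times\SO(n-2)$ answer.

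For part (1), the forward implication via reduction to a common $G^V\cong\SO_e(1,2)$ is the right idea. The ``transport by $\Ad(K')$'' step becomes precise once phrased in $\Hb^n$: one rotates the pair $(x^{-1}\!\cdot o,\,y^{-1}\!\cdot o)$ into the totally geodesic $\Hb^2$ containing all three $\gamma_i$ using transitivity of $K'=\SO(n)$ on $2$-planes in $T_o\Hb^n$, and then solves the $n=2$ problem inside that $\Hb^2$.

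Your converse, however, has a genuine gap. The ``key input'' $K'A_i'A_j'=G'\iff\af_i'+\af_j'=\sf'$ carries no information for $n\geq 3$: both sides are automatically false there because $\dim(\af_i'+\af_j')\leq 2<n=\dim\sf'$, independently of whether $\dim(\af_1'+\af_2'+\af_3')$ equals $2$ or $3$. Hence the observation that ``every $K'A_i'A_j'$ is a proper submanifold'' cannot separate the two cases, and you give no construction of the obstructing pair $(x,y)$ nor any reason it obstructs. (The $\dim=1$ case is easy: two generic points of $\Hb^n$ cannot be rotated onto a single geodesic through $o$.) The failure of $G=KAH$ when the three lines span a $3$-space is precisely the nontrivial direction handled in \cite{DKS}; deferring to that reference is fine, but the outline you wrote for this direction is not correct as it stands.
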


We say that the choice of $A$ is {\it generic} if all $\af_i'$ are distinct and 
$\dim (\af_1' +\af_2' +\af_3')=2$. 

The invariant measure $dz$ on $Z$ can then be estimated as 
$$ \int_Z f(z) \ dz \leq 
\int_K \int_A f(ka\cdot z_0) J(a) \ \ da \ dk \qquad (f\in C_c(Z), f\geq 0)$$
with 
\begin{equation} \label{Jacobi}  J(a)  = \sup_{w\in W} a^{2w\rho} \end{equation}  
by Lemma \ref{vg lemma}. 
Note that in this case the Weyl group $W$ is just $\{\pm1\}^3$.

\subsection{Proof of the Hypothesis A}

We first note that for all $\pi \in \hat G_s$ 
the space of $H$-invariants  
$$ (\Hc_\pi^{-\infty})^H = \C I \, .$$
is one-dimensional, see \cite{CO}, Thm. 3.1.

\par Write $\pi=\pi_1
\otimes \pi_2\otimes \pi_3$ with each factor a $K'$-spherical
unitary irreducible representation of $G'$. If we assume that $\pi\neq \1$ has
non-trivial  $H$-fixed distribution vectors, then at least two of
the factors $\pi_i$ are non-trivial.

\par  Let $v_i$ be normalized $K'$-fixed vectors of
$\pi_i$ and set $v=v_1\otimes v_2\otimes v_3$. Since $Z$ is a
multiplicity one space, the functional $I\in(\Hc_\pi^{-\infty})^H$
is unique up to scalars. Our concern is to obtain uniform 
$L^p$-bounds for the generalized matrix coefficients
$f_\pi:=m_{v,I}$:
$$f_\pi(g_1, g_2, g_3):=
I(\pi_1(g_1)^{-1} v_1 \otimes \pi_2(g_2)^{-1}v_2 \otimes
\pi_3(g_3)^{-1}v_3)\, ,$$ when $\pi$ belongs to 
the set $\Lambda$ of (\ref{defi Lambda}).

\par We decompose $\Lambda= \Lambda_0 \cup \Lambda_1\cup \{{\bf \1}\}$
with $\Lambda_0\subset \Lambda$ the set of $\pi\in \Lambda$ with all $\pi_i$ 
non-trivial, and $\Lambda_1$ the set of $\pi$'s with exactly one $\pi_i$ to 
be trivial.

\par Consider first the case where $\pi \in \Lambda_1$, i.e.
one $\pi_i$ is
trivial, say $\pi_3$. Then $\pi_2=\pi_1^*$.
We identify $Z \simeq G'\times G'$ via $(g, h) \mapsto (\1,g,h)H$ and obtain
$$f_\pi(g, h)= \la \pi_1(g)v_1, v_1\ra\, , $$
a spherical function. Note that $Z_\eta\simeq G'$ 
and Hypothesis A follows from standard 
properties  about $K'$-spherical functions on $G'$. To be more specific 
let $G'=N'A'K'$ be an Iwasawa-decomposition with middle-projection 
${\bf a}: G'\to A'$, then 
$$f_\pi(g, h)= \varphi_{\lambda_1}(g):= 
\int_{K'} {\bf a}(k'g)^{\lambda_1-\rho'} \ dk'\, .$$
We use Harish-Chandra's estimates 
$|\varphi_\nu(a)|\leq a^\nu \varphi_0(a)$ and
$\varphi_0(a)\le C a^{-\rho}(1+|\log a|)^d$ for $a\in A'$ in positive chamber. 
The condition of $\pi\in \Lambda_1$ implies that 
$\rho - \re \lambda_1>0$ is bounded away from zero and Hypothesis A follows in this case.

\par Suppose now that $\pi \in \Lambda_0$, i.e. all $\pi_i$ are non-trivial.
\par For a simplified exposition we assume that $n=2$, i.e. $G'=\PSl(2,\R)$, 
and comment at the end for the general case.  
Then $\pi_i =\pi_{\lambda_i}$ are principal series 
for some $\lambda_i \in i\R^+ \cup [0, 1)$ with $\Hc_{\pi_{i}}^\infty= C^\infty({\mathbb S}^1)$
in the compact realization.  Set $\lambda=(\lambda_1, \lambda_2, \lambda_3)$ and 
set $\pi=\pi_\lambda$.

\par In order to analyze $f_\pi$ we use $G=KAH$ and thus assume that
$g=a=(a_1, a_2, a_3)\in A$. We work in the compact model of
$\Hc_{\pi_i}= L^2({\mathbb S}^1)$ and use the explicit model for
$I$ in \cite{BRe}: for $h_1, h_2, h_3$ smooth functions on the
circle one has

\begin{align*}I(h_1 \otimes h_2 \otimes h_3)=& {1\over (2 \pi)^3}
\int_0^{2\pi} \int_0^{2\pi} \int_0^{2\pi} h_1(\theta_1) h_2(\theta_2)
h_3(\theta_3)\cdot \\
&\cdot  {\mathcal K}(\theta_1, \theta_2, \theta_3) \ d\theta_1 d\theta_2
d\theta_3 \, ,\end{align*}
where

$${\mathcal K}(\theta_1, \theta_2, \theta_3)=|\sin(\theta_2
-\theta_3)|^{(\alpha-1)/2}|\sin(\theta_1
-\theta_3)|^{(\beta-1)/2}|\sin(\theta_1
-\theta_2)|^{(\gamma-1)/2}\,.$$ In this formula one has
$\alpha=\lambda_1-\lambda_2 -\lambda_3$, $\beta=-\lambda_1
+\lambda_2 -\lambda_3$ and $\gamma=-\lambda_1-\lambda_2
+\lambda_3$ where $\lambda_i \in i\R \cup(-1, 1)$ are the
standard representation parameters of $\pi_i$. 
According to to \cite{CO}, Cor. 2.1, the kernel ${\mathcal K}$ is absolutely 
integrable.

\par Set 
$$A':=\Big\{ a_t:=\begin{pmatrix}  t & 0 \\ 0 & {1\over t}\end{pmatrix} \mid t>0\Big \} < G'$$ 
Then $A_i' = k_{\phi_i} A' k_{\phi_i}^{-1}$ with $\phi_i \in [0, 2\pi]$ and 

$$k_{\phi} = \begin{pmatrix} \cos \phi & -\sin\phi \\ \sin \phi & \cos\phi\end{pmatrix}\, .$$ 
Set $a_{t, i}=  k_{\phi_i} a_t k_{\phi_i}^{-1}$.

Returning to our analysis of $f_\pi$ we now take
$h_i(t_i, \theta_i) =[\pi_1(a_{t_i, i}) v_i](\theta_i)$
and remark that 
$$h_i(t_i, \theta_i)= {1\over ( t_i^2 + \sin^2(\theta_i -\phi_i)( {1\over t_i^2} -
  t_i^2))^{{1\over 2}(1+\lambda_i)}}\, .$$

\par Let us set $|\pi|:= \pi_{\re \lambda_1} \otimes \pi_{\re\lambda_2} \otimes \pi_{\re \lambda_3}$.
Our formulas then show 
\begin{equation} \label{bound} |f_\pi(a)|\leq 
f_{|\pi|}(a)\qquad (a\in A) \, .\end{equation}

\par Let $c_i:= 1-|\re \lambda_i|$ for $i=1,2,3$.  
The fundamental estimate in \cite{KSS2}, Thm.~3.2, 
then yields a constant $d$, 
independent of $\pi$, and a constant $C=C(\pi)>0$
  such that for $a=(a_{t_1,1}, a_{t_2,2}, a_{t_3, 3})$ one has 
\begin{equation} \label{triple bound}
|f_\pi(a)|\leq C { (1 +|\log t_1| + |\log t_2| + |\log t_3|)^d  \over [\cosh \log t_1]^{c_1}\cdot
[\cosh \log t_2]^{c_2} \cdot [\cosh\log t_3]^{c_3}}\, .\end{equation}
In view of (\ref{bound}) the constant $C(\pi)$ 
can be assumed to depend only on  the distance of $\re \lambda_i$ to the
trivial representation. Looking at  the integral representation of $f_\pi$ with the kernel ${\mathcal K}$ we deduce a lower bound 
without the logarithmic factor, i.e. the bound is essentially sharp. 
Hence (\ref{Jacobi}) together with the fact that all $f_\pi$ for $\pi \in \Lambda_0$ are in  $L^p(Z)$ for some $p<\infty$ 
implies that 
\begin{equation} \label{inf-bound} \inf_{\pi \in \Lambda_0} c_i(\pi)>0\, . \end{equation}

We now claim 
\begin{equation} \label{p-bound} \sup_{\pi \in \Lambda_0} \|f_\pi\|_p<\infty\, , \end{equation}
and
\begin{equation} \label{infty-bound} \sup_{\pi \in \Lambda_0} \|f_\pi\|_\infty<\infty\, .\end{equation}

For $0<\e<1$ set $\Lambda_{\e,\R}=[0,1-\e] \times [0,1-\e]\times[0,1-\e]$ and $\Lambda_\e:=i\af^* +\Lambda_\e$. 
It follows from (\ref{inf-bound}) that there exists an $\e>0$ such that $\Lambda_0\subset \Lambda_\e$. 
We prove the stronger inequalities with $\Lambda_0$ replaced by $\Lambda_\e$. 
In view of  (\ref{bound}) and (\ref{triple bound}) we may  replace by $\Lambda_\e$ by $\Lambda_{\e,\R}$. 
Let $\mathcal{E}_\e$ be the 
eight element set of extreme points of $\Lambda_{\e,\R}$. 
For  fixed $a=a_t$  and $\theta=(\theta_1,\theta_2,\theta_3) $ we let $F_\lambda(a,\theta)=\mathcal {K}(\theta) h_1(t_1,\theta_1) h_2(t_2, \theta_2) h_3(t_3, \theta_3)$  and note that 
the assignment $\Lambda_{\e,\R}\to \R_+$, $\lambda\mapsto F_\lambda (a,\theta)$ is convex. Therefore we get  for all 
$\lambda\in \Lambda_\e$ that 

$$ f_\lambda(a) \leq \sum_{\mu \in \mathcal{E}_\e} f_\mu(a)\, .$$
In view of (\ref{triple bound}) the inequalities (\ref{p-bound}) and (\ref{infty-bound}) then follow. 
\par On the other hand for $g=\1=(\1,\1,\1)$, the value
$f_\pi(\1)$ is obtained by applying $I$ to the constant function
$\1=\1\otimes\1\otimes\1$.
This value has been computed explicitly by
Bernstein and Reznikov in \cite{BRe} as 
$$ {\Gamma((\alpha+1)/4)\Gamma((\beta+1)/4)\Gamma((\gamma+1)/4)\Gamma((\delta+1)/4)\over 
\Gamma( (1-\lambda_1)/2)\Gamma( (1-\lambda_2)/2)\Gamma( (1-\lambda_3)/2)}$$
where $\alpha, \beta, \gamma$ are as before  and $\delta= -\lambda_1 -\lambda_2 -\lambda_3$.
Stirling approximation,  
$$
|\Gamma(\sigma+it)|= \text{const.}e^{-\frac\pi2 |t|} |t|^{\sigma-\frac12} 
\left(1+O(|t|^{-1})\right)
$$
as $|t|\to\infty$ and $\sigma$ is bounded, yields a lower bound
for $f_\pi(\1)$:
\begin{equation} \label{inf-2bound} \inf_{\pi \in \Lambda_0} |f_\pi(\1)|>0\, .\end{equation}

As $\|f_\pi\|_\infty\geq |f_\pi(\1)|$
the assertion (\ref{hypo1}) of  Hypothesis A is readily obtained from
(\ref{p-bound}) and (\ref{inf-2bound}). Likewise
(\ref{hypo2})
with $\Omega=\{\1\}$ follows from (\ref{infty-bound}) and (\ref{inf-2bound}).

In general for $G'=\SO_e(1,n)$ one needs to compute the Bernstein-Reznikov
integral. This was accomplished in \cite{Deit}. 

\begin{theorem} \label{thm=errorprasad} Let $Z=G'\times G'\times G'/ \diag (G')$ for
$G'=\SO_e(1,n)$ and assume that $H/\Gamma_H$ is compact.
Then the first error term
$\err(R,\Gamma)$ for the lattice counting
problem on $Z=G/H$ can be estimated as follows: for all $p>p_H(\Gamma)$
there exists a $C=C(p)>0$ such that
$$\err(R,\Gamma)\leq C |B_R|^{-{1\over (6n+3)p}}\, $$
for all $R\geq 1$.
\end{theorem}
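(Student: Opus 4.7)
The plan is to apply Theorem~\ref{thm=error} to the triple space $Z$, given that Hypothesis~A has been verified for $Z$ in the preceding analysis of Section~\ref{UI for triple}. Once Hypothesis~A is in hand for this setup, the remaining work reduces to a bookkeeping of the geometric invariants of $G = \SO_e(1,n)^3$ and to matching constants.

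The first step is to note that all hypotheses of Proposition~\ref{mpro} are in place: $G$ is semisimple with no compact factors (each factor $\SO_e(1,n)$ is simple and non-compact), the irreducibility of $\Gamma$ and compactness of $H/\Gamma_H$ are assumed, wavefrontness and real sphericality of $Z$ are standard for Gross--Prasad triples, and Hypothesis~A has been checked above through the explicit Bernstein--Reznikov / Deitmar kernel model. The key ingredients of that verification are the multiplicity-one statement of \cite{CO}, which makes the $H$-invariant functional $I$ canonical; the splitting $\Lambda = \Lambda_0 \cup \Lambda_1 \cup \{\mathbf{1}\}$ according to how many factors $\pi_i$ are non-trivial; Harish-Chandra's spherical function estimates for the easy $\Lambda_1$ piece; the uniform upper bounds $\sup_{\pi\in\Lambda}\|f_\pi\|_p < \infty$ and $\sup_{\pi\in\Lambda}\|f_\pi\|_\infty < \infty$ for $\Lambda_0$ coming from convexity on the parameter box together with the bound~\cite[Thm.~3.2]{KSS2}; and the uniform lower bound $\inf_{\pi\in\Lambda_0}|f_\pi(\mathbf{1})| > 0$ from Stirling's asymptotics applied to the Bernstein--Reznikov formula.

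The second step is the computation of the geometric invariants. Since $\dim \SO_e(1,n)/\SO(n) = n$ and $\rank_\R \SO_e(1,n)/\SO(n) = 1$, we have
\begin{equation*}
s := \dim G/K = 3n, \qquad r := \rank_\R G/K = 3.
\end{equation*}
Applying the Sobolev threshold $k > s+1 = 3n+1$ in Theorem~\ref{thm=error} produces an exponent $1/((2k+1)p)$ with $2k+1$ arbitrarily close to $6n+3$; absorbing the $\varepsilon$-loss into the constant $C = C(p)$ delivers the bound $\err(R,\Gamma) \leq C |B_R|^{-1/((6n+3)p)}$.

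The main obstacle is not in this last deduction but earlier, in the uniform control required by Hypothesis~A. In particular, the constant in the estimate from \cite{KSS2} must be uniform over $\Lambda$, which requires a uniform spectral gap $\inf_{\pi\in\Lambda_0}(1-|\re\lambda_i(\pi)|) > 0$; this is forced by the $L^p$-boundedness of $\Lambda$ via the Jacobian $J(a)$ in the $KAH$ decomposition. Once this uniformity is established, the rest of the argument is a routine application of the abstract machinery and of the smoothing/optimization step converting $\err_\alpha(R,\Gamma)$ into $\err(R,\Gamma)$. The smooth-counting bound $\err_{pt,\alpha}(R,\Gamma) \leq C|B_R|^{1 - 1/((6n+3)p)}$ then follows from Remark~\ref{error relate}.
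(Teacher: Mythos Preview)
Your proposal is correct and follows precisely the paper's implicit argument: Section~\ref{UI for triple} is devoted to verifying Hypothesis~A for the triple space, after which Theorem~\ref{thm=errorprasad} is stated without further proof as the specialization of Theorem~\ref{thm=error} with $s=\dim G/K=3n$; your trick of trading the slack in $p>p_H(\Gamma)$ against the slack in $k>3n+1$ to reach the clean exponent $(6n+3)p$ is exactly how the $\varepsilon$-loss is absorbed. One small point worth flagging: you invoke the threshold $k>s+1$, which is the \emph{compact $Y$} branch of Theorem~\ref{thm=error}, whereas Theorem~\ref{thm=errorprasad} as stated assumes only that $H/\Gamma_H$ is compact; the paper's exponent $6n+3$ (rather than $12n+3$) shows that the compact-$Y$ case is what is intended, consistent with the cubic-lattice example that follows.
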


\subsection{Cubic lattices}

Here we let $G_0=\SO_e(1,2)$ with the quadratic $Q$ form defining $G_0$
having integer coefficients and anisotropic over $\Q$, for example 
$$Q(x_0, x_1, x_2)= 2x_0^2 - 3x_1^2 - x_2^2\, .$$
Then, according to Borel,   $\Gamma_0=G_0(\Z)$ is a uniform lattice in 
$G_0$. 

\par Next let $k$ be a cubic Galois extension of $\Q$. 
Note that $k$ is totally real. An example of $k$ is the splitting field
of the polynomial $f(x)=  x^3 + x^2 - 2x - 1$.  Let $\sigma$ be a generator of 
the Galois group of $k|\Q$.  Let ${\mathcal O}_k$ be the ring of algebraic 
integers of $k$.  We define $\Gamma<G=G_0^3$ to be the image 
of $G_0({\mathcal O}_k)$ under the embedding 

$$ G_0({\mathcal O}_k)\ni \gamma \mapsto (\gamma, \gamma^\sigma, 
\gamma^{\sigma^2})\in G\, .$$
Then $\Gamma<G$ is a uniform irreducible lattice with trace $H\cap \Gamma\simeq \Gamma_0$
a uniform lattice in $H\simeq G_0$.

\section{Outlook}
We discuss some topics of harmonic analysis on reductive homogeneous spaces which 
are currently open and would have immediate applications to lattice counting.

\subsection{A conjecture which implies Hypothesis A}\label{sec=conj}

Hypothesis A falls in the context of a more general 
conjecture about the growth behavior of families of Harish-Chandra modules. 
\par We let $Z=G/H$ be a real spherical space. 
Denote by $A_Z^-\subset A_Z$ the compression cone of $Z$ (see 
Section \ref{wfrss}) and recall that wavefront means 
that $A^- A_H/A_H = A_Z^-$ which, however, we do not assume for the moment.

\par We use $V$ to denote Harish-Chandra modules for the pair $(\gf, K)$ and 
$V^\infty$ for their unique moderate growth smooth Fr\'echet globalizations.  
These $V^\infty$ are global objects in the sense that they are $G$-modules whereas
$V$ is defined in algebraic terms. We write $V^{-\infty}$ for the strong dual
of $V^\infty$.  We say that $V$ is $H$-distinguished provided that the 
space of $H$-invariants $(V^{-\infty})^H$ is non-trivial.

\par It is no big loss of generality to assume that $A_Z^-$ is a sharp cone, as 
the edge of this cone is in the normalizer of $H$ and in particular acts on the finite 
dimensional space of $H$-invariants.

\par As $A_Z^-$ is pointed it is a fundamental domain for the little Weyl group 
and as such a simplicial cone (see \cite{KK}, Section 9). 
If $\af_Z^-=\log A_Z^-$, then we write $\omega_1, \ldots,\omega_r$ for a set of generators (spherical co-roots) of $\af_Z^-$.

\par Set $\oline Q:=\theta(Q)$ where $\theta$ is the Cartan involution determined by the choice of $K$. 
Note that $V/ \oline \qf V$ is a finite dimensional $\oline Q$ module, in 
particular a finite dimensional  $A_Z$-module.  Let $\Lambda_1, \ldots, \Lambda_N\in \af_Z^*$ be the $\af_{Z,\C}$-weight spectrum. 
Then we define the $H$-spherical exponent $\Lambda_V\in \af_Z^*$ of $V$ by
$$ \Lambda_V(\omega_i) :=\max_{1\leq j \leq N} \re \Lambda_j(\omega_i)\, .$$

\par Further attached to $V$ is a ``logarithmic'' exponent $d\in \N$. Having this data 
we recall the main bound from \cite{KSS2} 
$$ |m_{v, \eta} (a\cdot z_0)| \lessapprox a^{\Lambda_V} ( 1 + \|\log a\|)^{d_V} \qquad  
(a \in A_Z^-)\, . $$

\begin{con} Fix a $K$-type $\tau$, a constant $C>0$, and a compact subset $\Omega\subset G$.
Then there exists a compact 
set $\Omega_A \subset A_Z^-$  such that for all Harish-Chandra modules $V$ with 
$\|\Lambda_V\|\leq C$, all $v \in V[\tau]$ and all $\eta \in (V^{-\infty})^{H}$ one has 
\begin{align*} &\max_{a\in A_Z^-\atop g\in \Omega} |m_{v, \eta}(ga\cdot z_0)| a^{-\Lambda_V} 
( 1 + \|\log a\|)^{-d_{V}}  = \\
& \max_{a\in \Omega_A\atop g\in \Omega} |m_{v, \eta}(ga\cdot z_0)| a^{-\Lambda_V} 
( 1 + \|\log a\|)^{-d_{V}}\, .\end{align*}
\end{con}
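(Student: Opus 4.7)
The strategy is to reduce the conjecture to a uniform asymptotic statement and then exploit the theory of regular-singular expansions for generalized matrix coefficients, made uniform across the family $\{V : \|\Lambda_V\| \leq C\}$. Set $\Phi_V(g,a) := |m_{v,\eta}(g a \cdot z_0)|\, a^{-\Lambda_V}(1+\|\log a\|)^{-d_V}$. By the main bound of \cite{KSS2} recalled just above the conjecture, $\Phi_V$ is bounded on $\Omega \times A_Z^-$. The conjecture asserts that the supremum is attained on a compact subset $\Omega \times \Omega_A$ independent of $V$, $v$, $\eta$. The plan is to show that $\Phi_V(g,a)$ stabilizes to a continuous function of the asymptotic direction as $a$ leaves compacta in $A_Z^-$, with uniform rate in the family; a standard compactness argument on the sphere of directions in $\af_Z^-$ then produces the desired $\Omega_A$.

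For an individual Harish-Chandra module $V$, the theory of regular-singular holonomic systems (Harish-Chandra's leading-exponent theory, adapted to real spherical spaces in \cite{KKSS2} and in the work behind the bound stated above) supplies an asymptotic expansion
\begin{equation*}
m_{v,\eta}(g a \cdot z_0) = \sum_{\lambda \in E(V)} \sum_{n=0}^{d_V(\lambda)} a^\lambda (\log a)^n \Psi_{\lambda,n}(g;v,\eta) + R_V(g,a),
\end{equation*}
valid for $a$ sufficiently deep in $A_Z^-$, where $E(V) \subset \af_{Z,\C}^*$ is finite, $\re\lambda(\omega_i) \leq \Lambda_V(\omega_i)$ for all $i$ and $\lambda \in E(V)$, equality being achieved for $\lambda = \Lambda_V$ with polynomial degree $d_V$, and $R_V$ is of strictly smaller exponential order. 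Along any ray $a_t := \exp(tX)$ with $X$ in the interior of $\af_Z^-$, division by $a_t^{\Lambda_V}(1+\|\log a_t\|)^{d_V}$ extracts the top coefficient, so $\Phi_V(g,a_t)$ has a finite limit depending continuously on $X/\|X\|$.

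The key step is to make this expansion uniform over $\{V : \|\Lambda_V\| \leq C\}$ and the finite-dimensional data $V[\tau]$ and $(V^{-\infty})^H$. By the Casselman-Wallach subquotient theorem every such $V$ is a subquotient of a minimal principal series $I(P,\sigma,\nu)$ with $\sigma$ among finitely many $M$-types (constrained by $\tau$) and $\re\nu$ in a compact set determined by $C$. Organizing these into holomorphic families of principal series, the leading coefficients $\Psi_{\lambda,n}$ and the remainder estimates depend meromorphically on $\nu$, with singular loci compensated by the $(\log a)$-factors and by the renormalization built into $d_V(\lambda)$. Combined with compactness of the $\nu$-parameter and of the direction space in $\af_Z^-$, the uniform convergence along rays yields the required $\Omega_A$.

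The main obstacle is precisely this uniformity across $\nu$: the exponent set $E(V)$ varies with $\nu$, and sub-leading exponents can collide with the leading one $\Lambda_V$ (producing extra logarithmic factors and threatening naive uniform bounds). Controlling such collisions requires either a careful stratification of the parameter space, or a Mellin-Barnes / contour-integral representation of $m_{v,\eta}(ga \cdot z_0)$ that makes the holomorphy in $\nu$ manifest together with explicit polynomial growth along vertical strips. This is presumably the ``technical tour de force'' alluded to in the symmetric case, where the requisite analytic theory of $H$-invariants in families is available through Brylinski-Delorme, Carmona-Delorme, and van den Ban-Schlichtkrull. For general wavefront real spherical spaces the corresponding theory is only partially developed, which is precisely why this statement is posed as a conjecture rather than proved.
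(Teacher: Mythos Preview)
The statement you were asked to prove is explicitly a \emph{Conjecture} in the paper; the authors do not prove it and make no attempt to. Immediately after stating it they only remark that it implies Hypothesis~A when matrix coefficients are bounded, and then discuss a possible strengthening. So there is no ``paper's own proof'' to compare against.

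Your proposal is accordingly not a proof but a strategy sketch, and you correctly acknowledge this in your final paragraph. The outline you give --- reduce to uniform asymptotic expansions of $m_{v,\eta}$ along rays in $A_Z^-$, realize the relevant $V$'s as subquotients of finitely many holomorphic families of principal series, and then argue by compactness in the parameter $\nu$ and in the direction sphere --- is a reasonable line of attack and is in the spirit of the analytic machinery the paper invokes (the bounds from \cite{KSS2}, \cite{KKSS2}). You also correctly isolate the genuine obstruction: the exponent set $E(V)$ moves with $\nu$, sub-leading exponents can collide with $\Lambda_V$, and controlling the resulting logarithmic degenerations uniformly is exactly the missing ingredient. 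This matches the paper's own assessment that the conjecture ``appears to be true for symmetric spaces but requires quite a technical tour de force'' and that ``in general, the techniques currently available do not allow for an elegant and efficient solution.''

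In short: there is no gap to flag beyond the one you already flag yourself, and there is nothing in the paper to compare your argument to. Your write-up would be more honest if presented as ``evidence and strategy toward the conjecture'' rather than as a proof proposal.
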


It is easily seen that this conjecture implies Hypothesis A
if all the generalized matrix coefficients $m_{v,\eta}$
are bounded, as for example it is the case when $Z$ is wavefront (see Proposition \ref{propI}(\ref{one})).

\begin{rmk} It might well be that a slightly stronger conjecture is true. For that we recall that a Harish-Chandra module 
$V$ has a unique minimal globalization, the analytic model $V^\omega$. The space  $V^\omega$ is an increasing 
union of subspaces $V_\e$ for $\e \to 0$. The parameter $\e$ parametrizes left $G$-invariant neigborhoods $\Xi_\e\subset G_\C$
of $\bf 1$ which decrease with $\e\to 0$. Further $V_\e$ consists of those vectors $v\in V^\omega$ for which the orbit 
map $G\to V^\omega, \ \ g\mapsto g\cdot v$ extends to a holomorphic map on $\Xi_\e$.
For fixed $\e, C>0$ the strengthened conjecture would be that there 
exists a compact subset $\Omega_A$ such that  for all Harish-Chandra modules $V$ with 
$\|\Lambda_V\|\leq C$ and all $v \in V_\e$ one has 
\begin{align*} &\max_{a\in A_Z^-} |m_{v, \eta}(a\cdot z_0)| a^{-\Lambda_V} ( 1 + \|\log a\|)^{-d_{V}}  = \\
& \max_{a\in \Omega_A} |m_{v, \eta}(a\cdot z_0)| a^{-\Lambda_V} ( 1 + \|\log a\|)^{-d_{V}}\, .\end{align*}
Note that the compact set $\Omega$ is no longer needed, as $\Omega \cdot V_\e \subset V_{\e'}$.
\end{rmk}

\subsection{Spectral geometry of $Z_\eta$}
In the general context of a reductive real spherical space 
it may be possible to establish both main term counting and the error term bound,
with the arguments presented here for wavefront spaces,  
provided the following two key questions allow affirmative answers.

\par In what follows $Z=G/H$ is a real reductive spherical space and
$V$ denotes an irreducible  Harish-Chandra module and $\eta \in (V^{-\infty})^H$.
\bigskip 
\par \noindent {\bf Question A:}  {\it Is $H_\eta$ reductive?}
\par 
\medskip
\noindent {\bf Question B:}  {\it If for $v\in V$ the generalized matrix coefficient $m_{v, \eta}$ is bounded, then there exists a
$1\leq p <\infty$ such that $m_{v,\eta} \in  L^p (Z_\eta)$. }
\bigskip 

In this context we note that issues related to the well-factorization of the
intrinsic balls in affine spherical spaces can possibly be resolved with similar methods to
those applied here, using volume estimates as described in Theorem 7.17 of \cite{GNbook}.

\smallskip{\bf Acknowledgement.} We are grateful to an anonymous referee for valuable comments
that improved the presentation in the present version of the paper.


\begin{thebibliography} {10}


\bibitem{BeOh} Y. Benoist and H. Oh, {\it Effective equidistribution of 
S-integral points on symmetric varieties}, 
Ann. Inst. Fourier (Grenoble) {\bf 62} (2012), no. {\bf 5}, 1889--1942. 

\bibitem{BK} J. Bernstein and B. Kr\"otz, {\it Smooth Fr\'echet globalizations of Harish-Chandra
modules}, Israel J. Math. {\bf 199} (2014), 35--111. 

\bibitem{BRe} J. Bernstein and A. Reznikov,
{\it Estimates of automorphic functions}, Moscow Math. J. {\bf 4 (1)} (2004),
19--37

\bibitem{BraPez} P. Bravi and G. Pezzini, {\it The spherical systems of the 
wonderful reductive subgroups}, 
J. Lie Theory {\bf 25} (2015), 105--123. 

\bibitem{CO} J.-L. Clerc and B. \O rsted, 
{\it Conformally invariant trilinear forms on the sphere}. 
Ann. Inst. Fourier {\bf 61} (2011), 1807--1838. 

\bibitem{C} L. Clozel, {\it D\'emonstration de la conjecture $\tau$}, 
Invent. Math. {\bf 151} (2003), no. {\bf 2}, 297--328.

\bibitem{Cowling} M. Cowling, {\it Sur les coefficients des
representations unitaires des groupes de Lie simples},
Lecture Notes in Mathematics {\bf 739} (1979), 132--178.

\bibitem{DKS} T. Danielsen, B. Kr\"otz and H. Schlichtkrull, {\it Decomposition theorems for triple spaces}, 
Geom. Dedicata {\bf 174} (2015), 145--154.

\bibitem{Deit} A. Deitmar, {\it Invariant triple products}, 
Int. J. Math. Math. Sci. 2006, Art. ID 48274, 22 pp. 

\bibitem{DRS} W. Duke, Z. Rudnick and P. Sarnak, {\it Density of integer points on affine homogeneous varieties},
Duke Math. J.  {\bf 71}  (1993),  no. {\bf 1}, 143--179.
 
\bibitem{EM} A. Eskin and C. McMullen, {\it Mixing, counting, and equidistribution in Lie groups}, 
Duke Math. J. 71 (1993), no. 1, 181--209. 

\bibitem{EMS} A. Eskin, S. Mozes and N. Shah, {\it Unipotent flows and
counting lattice points on homogeneous varieties},  Ann. of Math. (2)
{\bf 143}  (1996),  253--299.


\bibitem{GNbook} A. Gorodnik and A. Nevo,  The ergodic theory of lattice points, 
Princeton University press 2008.

\bibitem{GN} \bysame , {\it Counting lattice points}, 
J. Reine Angew. Math. {\bf 663} (2012), 127--176.


\bibitem{HC} Harish-Chandra, Automorphic Forms on Semisimple
Lie Groups, Springer LNM 62.

\bibitem{Ji} L. Ji, {\it The Weyl upper bound on the discrete spectrum of
locally symmetric spaces}, J. Diff. Geom. {\bf 51 (1)} (1999), 97--147.


\bibitem{KK} F. Knop and B. Kr\"otz, {\it Reductive group actions}, arXiv: 1604.01005


\bibitem{KKSS1} F. Knop, B. Kr\"otz, E. Sayag and H. Schlichtkrull, 
{\it Simple compactifications and polar decomposition for real spherical spaces}, 
Selecta Math. (N.S.) {\bf 21} (2015), 1071--1097.

\bibitem{KKSS2} \bysame, {\it Volume growth,  
temperedness and integrability of matrix coefficients on a 
real spherical space}, arXiv: 1407.8006. J. Funct. Anal., to appear.


\bibitem{KKS} F. Knop, B. Kr\"otz and H. Schlichtkrull, {\it The local 
structure theorem for real spherical varieties},  
Compositio Math. {\bf 151} (2015), 2145--2159.



\bibitem{KSS1} B. Kr\"otz, E. Sayag and H. Schlichtkrull,  
{\it Vanishing at infinity on homogeneous spaces of reductive type}, arXiv: 1211.2781. 
Compositio Math., to appear.

\bibitem{KSS2} \bysame, {\it Decay of matrix coefficients on 
reductive homogeneous space of spherical type}, Math. Z.
{\bf 278} (2014), 229--249.


\bibitem{KS} B. Kr\"otz and H. Schlichtkrull,  
{\it Multiplicity bounds and the subrepresentation theorem for real spherical spaces},
Trans. Amer. Math. Soc. {\bf 368} (2016), 2749-2762.


\bibitem{SV} Y. Sakellaridis and A. Venkatesh, {\it Periods and harmonic analysis 
on spherical varieties}, arXiv:1203.0039


\end{thebibliography}
\end{document}